\theoremstyle{plain}
\newtheorem{thm}{Theorem}[section]
\newtheorem{theorem}{Theorem}[section]
\newtheorem{corollary}[thm]{Corollary}
\newtheorem{lemma}[thm]{Lemma}
\newtheorem{proposition}[thm]{Proposition}
\newtheorem{conj}[thm]{Conjecture}
\theoremstyle{definition}
\newtheorem{defn}{Definition}[section]
\newtheorem{definition}{Definition}[section]
\newtheorem{remark}[defn]{Remark}
\newtheorem{example}[defn]{Example}
\theoremstyle{remark}
\numberwithin{equation}{section}
\numberwithin{figure}{section}
\DeclareMathOperator{\re}{Re} \DeclareMathOperator{\im}{Im}
\DeclareMathOperator*{\res}{\mathrm{Res}}
\def\I{\mathrm{i}}
\def\D{{\mathbb D}}
\def\R{{\mathbb R}}
\def\C{{\mathbb C}}
\begin{document}

\title{
Non-univalent solutions of the Polubarinova-Galin equation }
\author{
Bj\"orn Gustafsson\textsuperscript{1},
Yu-Lin Lin\textsuperscript{2}\\
}

\date{November 7, 2014}

\maketitle

\begin{abstract}
We study non-univalent solutions of the Polubarinova-Galin equation, describing the time evolution of the conformal map from the unit disk onto
a Hele-Shaw blob of fluid subject to injection at one point. In particular, we tackle the difficulties arising when the map is not even
locally univalent, in which case one has to pass to weak solutions developing on a branched covering surface of the complex plane.

One major concern is the construction of this Riemann surface, which is not given in advance but has to be constantly up-dated along with  
the solution. Once the Riemann surface is constructed the weak solution is automatically global in time, but we have had to leave open the question
whether the weak solution can be kept simply connected all the time (as is necessary to connect to the Polubarinova-Galin equation). 
A certain crucial statement, a kind of stability statement for free boundaries, has therefore been left as a conjecture only.

Another major part of the paper concerns the structure of rational solutions (as for the derivative of the mapping function). Here 
we have fairly complete results on the dynamics. Several examples are given.

\end{abstract}

\noindent {\it Keywords:}  Hele-Shaw flow, weighted Hele-Shaw flow, Laplacian growth, Polubarinova-Galin equation,
L\"owner-Kufarev equation, L\"owner chain, subordination, quadrature Riemann surface, Abelian domain, algebraic domain,
contractive zero divisor, partial balayage. 

\noindent {\it MSC:}  30C20, 31C12, 34M35, 35R37, 76D27.

\footnotetext[1]
{Dept. Mathematics, KTH, 100 44 Stockholm, Sweden. Email: \tt{gbjorn@kth.se}}
\footnotetext[2]
{Dept. Mathematics, KTH, 100 44 Stockholm, Sweden. Email: \tt{ylli@kth.se}}

\tableofcontents


\section{Introduction}

\subsection{General}

This paper is a continuation of \cite{Gustafsson-Lin-2013}, in which the motion of zeros and poles associated to locally univalent solutions of the
Polubarinova-Galin equation was studied. This differential equation, in one real variable (time) and one complex variable, describes the time evolution of a conformal map from the unit disk onto a growing
blob of a viscous fluid squeezed between two parallel plates. The two-dimensional view of the fluid blob is modeled by a domain in the complex plane,
and its growth is assumed to be caused by a source at the origin. The history of this problem goes back to an experiment and a subsequent paper \cite{Hele-Shaw-1898} by Henry Selby Hele-Shaw
in 1898, and the literature on it is by now quite considerable. A short selection is
\cite{Galin-1945},
\cite{Polubarinova-Kochina-1945},
\cite{Vinogradov-Kufarev-1948},
\cite{Saffman-Taylor-1958},
\cite{Richardson-1972}, 
\cite{Shraiman-Bensimon-1984},
\cite{Gustafsson-1984},
\cite{Varchenko-Etingof-1992},
\cite{Reissig-1993}, 
\cite{Hohlov-Howison-Huntingford-Ockendon-Lacey-1994},
\cite{Escher-Simonett-1997},
\cite{Lin-2009a}, \cite{Lin-2009b},
\cite{Hedenmalm-Shimorin-2002},
\cite{Xie-Tanveer-2003},
\cite{Gustafsson-Vasiliev-2006},
\cite{Khavinson-Mineev-Putinar-2009},
\cite{Abanov-Mineev-Zabrodin-2009},
\cite{Ross-Witt-Nystrom-2012},
\cite{Gustafsson-Teodorescu-Vasiliev-2014}.
For the history of the subject we refer to \cite{Vasiliev-2009}.

In the present paper we try to extend previous results on univalent and locally univalent solutions, in particular those in \cite{Gustafsson-Lin-2013},
to the setting of conformal maps which are not even locally univalent. Such mappings  are then considered 
as univalent maps onto subdomains of a suitable Riemann surface, a branched covering surface of the complex plane. 

The Polubarinova-Galin equation for a time dependent normalized conformal map $f(\cdot,t): \D\to \Omega(t)$, where $\Omega(t)$ is the fluid domain at time $t$, reads
\begin{equation}\label{pg0}
{\rm Re}\left[\dot{f}(\zeta,t)\overline{\zeta
f'(\zeta,t)}\right]=q(t) \quad {\rm for}\,\,\zeta\in\partial
\mathbb{D},
\end{equation}
where $q(t)>0$ is the source strength and the normalization means that $f(0,t)=0$, $f'(0,t)>0$. The classical case is that $f$ is univalent, but in this paper
we shall allow arbitrary functions $f$, analytic in some neighborhood of the closed unit disk and subject to the above normalization. 
Then it turns out that it is appropriate to add to (\ref{pg0}) the requirement that
\begin{equation}\label{fomega}
\frac{d}{dt} f(\omega(t),t)=0
\end{equation}
for every zero $\omega(t)$ of $f'(\cdot,t)$ inside $\D$.
With this requirement, (\ref{pg0}) and (\ref{fomega}) taken together become equivalent to an equation of L\"owner-Kufarev type (see (\ref{lk}), (\ref{poisson}) below) and it follows that
the family $f(\cdot,t)$ (for $t$ in some interval) becomes a subordination chain. 
This entails that there exists a Riemann surface $\mathcal{M}$ such that the  $f(\cdot,t)$ become
univalent as mappings into $\mathcal{M}$. As such mappings we put a tilde on the names of functions and domains:
\begin{equation*}\label{omega0}
\tilde{f}(\cdot,t): \D\to\mathcal{M}, \quad \tilde{\Omega}(t)=\tilde{f}(\D,t).
\end{equation*}
Thus we obtain a Hele-Shaw evolution on a Riemann surface.

For (\ref{pg0}), (\ref{fomega}) we start with some given $f(\cdot,0)$, as initial condition. Even if this is taken to be univalent it may happen
that zeros of $f'(\cdot,t)$ reach $\partial\D$ and try to enter $\D$. This causes problems for  (\ref{pg0}) when $q>0$, and one has to pass to a weak
solution in order to allow a zero to make the transition into $\D$. It turns out that the transition is indeed possible, but the solution will then not be smooth
in time. 

A major case under consideration will be when $f'(\zeta,0)$ is a rational function. Then $f'(\zeta,t)$ will remain a rational function for $t>0$, but when a
zero of $f'$ passes through $\partial\D$ it turns out the structure of this rational function changes. In the simplest case it will acquire two new zeros and one pole
of order two. The behavior is quite interesting, and it connects to the theory of contractive divisors on Bergman space \cite{Hedenmalm-Korenblum-Zhu-2000}.

So a considerable part of the paper deals with the structure of rational solutions. Another part of the paper, which however is incomplete at present, is the study of
global in time solutions. The main assertion then is that, given any $f(\cdot,0)$, there exists a weak solution of (\ref{pg0}), (\ref{fomega}) defined for all
$0\leq t<\infty$. The proof of this requires the construction of the appropriate Riemann surface $\mathcal{M}$. This is a not completely trivial task because $\mathcal{M}$ has to be constructed
along with the solution: every time a zero of $f'(\cdot,t)$ reaches $\partial\D$ the Riemann surface has to be updated with a new branch point (as a covering surface of the complex plane).

What we have at present is a complete proof, except for an isolated technical difficulty which still remains. More exactly, what we need to prove is the conjecture stated below.
The prerequisits for the conjecture are as follows.

Let $g$ be a function analytic in a neighborhood of the closed unit disk.
From the theory of quadrature domains \cite{Sakai-1982}, \cite{Sakai-1983}, or the related theory of partial balayage \cite{Gustafsson-Sakai-1994},
\cite{Gardiner-Sjodin-2009}, it follows that for every $t>0$ sufficiently small there exists a domain $D(t)\supset\D$, uniquely determined up to a null-set
and compactly contained in the region of analyticity of $g$, such that (with $dm=dxdy$)
$$
\int_{D(t)}h|g|^2dm \geq \int_{\D}h|g|^2dm+t h(0)
$$
for every function $h$ which is subharmonic and integrable (with respect to $|g|^2m$) in $D(t)$. 
What we will need is that this $D(t)$ is simply
connected if $t>0$ is sufficiently small. In a slightly stronger form this is our conjecture:

\begin{conj}\label{lem:simplyconnected0}If $t>0$ is sufficiently small, then $D(t)$ is star-shaped with respect to the origin, 
in particular simply connected.
\end{conj}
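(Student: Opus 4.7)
The plan is to represent $D(t)$ via a time-dependent conformal map from $\mathbb{D}$ and apply the classical starlikeness criterion, reducing the geometric conjecture to a uniform-continuity statement for the short-time solution of the weighted Hele-Shaw flow encoded by $D(t)$. Let $f(\cdot, t): \mathbb{D} \to D(t)$ be the normalized Riemann map with $f(0,t)=0$ and $f'(0,t)>0$. Since $D(0) = \mathbb{D}$, we have $f(\zeta,0) = \zeta$.

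\textbf{Step 1 (Short-time regularity).} First I would establish that, for $t$ in some interval $[0,T)$, the map $f(\cdot, t)$ extends holomorphically to a fixed neighborhood of $\overline{\mathbb{D}}$, and that $(\zeta, t) \mapsto f(\zeta, t)$ is continuous on this neighborhood times $[0,T)$ in the supremum norm. This is a classical short-time existence/regularity result for the Polubarinova--Galin-type equation driven by the weight $|g|^2$: differentiating the quadrature identity shows that at $t=0$ the outward normal velocity of $\partial\mathbb{D}$ equals $\frac{1}{2\pi|g|^2}$, which is real-analytic wherever $g \neq 0$ on $\partial\mathbb{D}$. Standard Cauchy--Kovalevskaya-type arguments (or, equivalently, the contraction-mapping proof of short-time solvability of the L\"owner--Kufarev equation associated to this flow) then provide the required analyticity and continuity.

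\textbf{Step 2 (Starlikeness criterion).} Next I would invoke the classical criterion: a normalized univalent $f$ maps $\mathbb{D}$ onto a domain star-shaped with respect to $0$ if and only if
\[
\re\!\left[\frac{\zeta\, f'(\zeta, t)}{f(\zeta, t)}\right] > 0 \quad \text{for all } \zeta \in \mathbb{D}.
\]
For small $t$, $f(\cdot, t)$ is univalent with a simple zero only at the origin, so the quotient $\zeta f'/f$ is holomorphic and nonvanishing on a neighborhood of $\overline{\mathbb{D}}$. At $t=0$ it equals $1$ identically. Combining with Step 1, the map $t \mapsto \zeta f'(\zeta,t)/f(\zeta,t)$ is continuous in the sup-norm on $\overline{\mathbb{D}}$, so for $t>0$ sufficiently small
\[
\left|\frac{\zeta f'(\zeta, t)}{f(\zeta, t)} - 1\right| < \tfrac{1}{2} \quad \text{on } \overline{\mathbb{D}},
\]
which yields $\re[\zeta f'/f] > \tfrac12$ and hence star-shapedness of $D(t)$.

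\textbf{Main obstacle.} The critical and non-routine point is Step 1, specifically the behavior when $g$ has zeros on $\partial\mathbb{D}$. At such a point the normal velocity $1/(2\pi|g|^2)$ blows up, and the boundary develops an analytic but unbounded protrusion; one must argue that even in this degenerate regime, $\partial D(t)$ remains a smooth starlike curve for small $t$, rather than, e.g., forming a cusp or folding back. One route is to work directly with the partial balayage operator and exploit that $D(t) \setminus \overline{\mathbb{D}}$ has weighted area exactly $t$ together with the monotonicity $D(s) \subset D(t)$ for $s<t$, which forces $\partial D(t)$ to separate the origin from infinity radially on each ray; a second route is to regularize $g$ to a nonvanishing $g_\varepsilon$, prove star-shapedness uniformly in $\varepsilon$, and pass to the limit. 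In the non-degenerate case where $g$ is nonvanishing on $\partial\mathbb{D}$, Step 1 is standard and the whole argument goes through without complication.
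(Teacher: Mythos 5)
This statement is a \emph{conjecture} in the paper, not a theorem: the authors explicitly state they have no rigorous proof, and Section~6 gives only partial steps towards one. Your argument does not close that gap either, and in fact runs into trouble earlier than your ``Main obstacle'' paragraph admits. Most fundamentally, the setup is circular: you introduce a normalized Riemann map $f(\cdot,t):\mathbb{D}\to D(t)$, but the existence of such a map presupposes that $D(t)$ is simply connected, which is precisely part of what the conjecture asserts. The set $D(t)$ is given only as a weak solution by partial balayage and could a priori have holes. The paper's partial approach sidesteps this by working directly with the obstacle-problem function $u$ from the balayage, which is defined regardless of the topology of $D(t)$: they put $v=r\,\partial u/\partial r$, note that $v\le 0$ yields star-shapedness, compute $\Delta v=2|g|^2\re(1+zg'/g)$ on $D(t)\setminus\mathbb{D}$, and attempt the maximum principle; this succeeds only when they can control the sign of $\re(1+zg'/g)$ near a boundary zero of $g$, which is exactly what they cannot do in general.

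Even granting the conformal parametrization, your Step~1 is unjustified in the only case that matters. Cauchy--Kovalevskaya and the contraction-mapping short-time theory for L\"owner--Kufarev require the driving data $q/|g|^2$ to be at least bounded on $\partial\mathbb{D}$; this fails precisely when $g$ vanishes on the boundary, which is the degenerate case singled out in the paper. Moreover, sup-norm continuity of $f(\cdot,t)$ in $t$ would not suffice for Step~2: you need continuity of $f'$ up to $\overline{\mathbb{D}}$, and controlling $f'$ near the preimage of a boundary zero of $g$, where the normal velocity blows up, is essentially as hard as the conjecture itself. Your two fallback routes also leave the gap open: monotonicity $D(s)\subset D(t)$ is a set inclusion and does not force each ray from the origin to exit $D(t)$ exactly once (nested non-starlike families are easy to construct), and the regularization route would need $\varepsilon$-uniform lower bounds on $\re[\zeta f_\varepsilon'/f_\varepsilon]$, which is the missing ingredient. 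Your reduction to the starlikeness criterion $\re[\zeta f'/f]>0$ is nonetheless a genuinely different route from the paper's PDE/obstacle-problem argument, and both approaches do succeed in the non-degenerate case $g\ne 0$ on $\partial\mathbb{D}$, where Caffarelli--Friedman free-boundary stability gives the needed regularity; the paper's route gets somewhat further in the degenerate case by reducing to a concrete local sign condition that can be checked in examples like the cardioid.
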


If $g\ne 0$ on $\partial\D$, so that $|g|^2\geq c>0$ in a neighborhood of $\partial\D$, then Conjecture~\ref{lem:simplyconnected0} holds,
as a consequence of results on stability of free boundaries in \cite{Caffarelli-1981}, \cite{Friedman-1982}, for example.
However, we need to use Conjecture~\ref{lem:simplyconnected0} exactly when $g$ has zeros on $\partial\D$. Even in that case
there are strong intuitive and analytic support for Conjecture~\ref{lem:simplyconnected0}, but no rigorous proof that we are aware of. 
The difficulty of proving a statement like Conjecture~\ref{lem:simplyconnected0} was recognized already by Sakai
\cite{Sakai-1988} (Section~5 there).

Despite the fact that we are not been able to settle the above conjecture we believe that the remaining parts of the paper contains enough
interesting material to deserve publication, at least in preprint form. We consider the study of rational solutions (general structure, 
motion of zeros and poles) together with the general set-up for lifting non-univalent solutions to a Riemann surface, which is not
{\it a priori} given, as our our main achievements. In addition, there are a number of enlightening examples.

\subsection{Contents of paper}\label{sec:contents}

A more detailed description of the contents of the paper goes as follows.
In Section~\ref{sec:preparatory} we review the set-up and terminology in the locally univalent case, following essentially \cite{Gustafsson-Lin-2013}.
Section~\ref{sec:nonlocuniv} discusses L\"owner chains and subordination, based on corresponding material in \cite{Pommerenke-1975}, and also
clarifies the relationship between the Polubarinova-Galin equation and the corresponding equation of L\"owner-Kufarev type in the non-locally
univalent case.

Since the global solutions we are looking for  will not be smooth in general  we have to discuss weak solutions, of variational inequality type, which can be formulated in terms
of quadrature domains for subharmonic functions, in the spirit of Sakai \cite{Sakai-1982}, or else in terms of partial balayage. These notions are
explained in some detail in Section~\ref{sec:weaksolutions}, in the planar case, and the corresponding Riemann surface versions are developed in
Section~\ref{sec:lifting}. 

Partial balayage can be considered as an orthogonal projection in a Hilbert space, and when performing this on a covering
surface the question arises to what extent it commutes with the projection map which pushes, for example, measures on the covering space
down to measures on the base space. An affirmative answer to this question is given in Section~\ref{sec:compatibility}. 

The main assertion concerning global in time  weak solutions which stay simply connected all the time
is stated in Section~\ref{sec:universal}, and is proved with
Conjecture~\ref{lem:simplyconnected0} taken as an assumption. In Section~\ref{sec:examples} we elaborate in detail three different solutions 
of the Polubarinova-Galin equation starting out from an cardioid,
and in Section~\ref{sec:rational}, finally, we expose the general structure of rational solutions (the derivative of the mapping function being rational),
and set up the motion of zeros and poles as a dynamical system. Particular emphasis is given to how the structure changes when a zero crosses
the unit circle.

\subsection{Dedication}

We would like to dedicate this paper to the memory of {\it Makoto Sakai}, who passed away in December 2013, at the age of 70.
Sakai made original and groundbreaking contributions in potential theory, highly relevant to the contents
of the present paper. He was one of the creators of the theory of quadrature domains, and his books \cite{Sakai-1982}, \cite{Sakai-2010} 
on the subject  will have a long-standing impact on the development of the subject, and on potential theory and its applications
in general. His papers and books are not always easy to read, but they are very sharp, and Sakai's work is now receiving
increasing recognition in the general mathematical community.

The present paper is closely related to one of Sakai's least known papers, namely \cite{Sakai-1988}. Sakai always
wanted to obtain complete and sharp result, and he was usually successful in this respect. However, in \cite{Sakai-1988} he did not reach that
full perfection, he had to make a probably unnecessary assumption (that a certain domain has no cusps on its boundary)
in order to prove his main result. Ironically, the authors of the present paper were stopped on essentially the same mathematical difficulty,
which we now have left as a conjecture.

\subsection{Acknowledgements}
The authors are grateful to Michiaki Onodera for important information and discussions.


\section{Preparatory material}\label{sec:preparatory}

\subsection{List of notations}\label{sec:notations}

We here list some notations which will be used, but not always further explained, in the
paper.
\begin{itemize}

\item $\mathbb{D}=\{\zeta\in \mathbb{C}:|\zeta|< 1\}$,
$\mathbb{D}(a,r)=\{\zeta\in \mathbb{C}:|\zeta-a|< r\}$.

\item $dm= dm(z)=dx\wedge dy=\frac{1}{2\I}d\bar{z}\wedge dz$
($z=x+\I y$), area measure in the $z$ plane.

\item $\omega^{*}={1}/{\overline{\omega}}$, for $\omega\in \mathbb{C}$.

\item $h^{*}(\zeta)=\overline{h(1/\bar{\zeta})}
=\sum_{j=1}^{m}\overline{b}_{j}\zeta^{-j}$, where
$h(\zeta,t)=\sum_{j=1}^{m}b_{j}\zeta^{j}$.

(There is a slight ambiguity in this notation: we have $\zeta^*=1/\bar\zeta$
if $\zeta$ is considered as a point, whereas $f^*(\zeta)=1/\zeta$ for the
function $f(\zeta)=\zeta$.)

\item $\dot{f}(\zeta,t)=\frac{\partial}{\partial t}f(\zeta,t)$,
$f^{'}(\zeta,t)=\frac{\partial}{\partial\zeta}f(\zeta,t)$.

\item With ${E}\subset\mathbb{C}$ any set which contains the origin,
\begin{align}
\mathcal{O}({E})=&\{f:  \mbox{$f$ is analytic in some neighborhood
of
$E$}\},\notag\\
\mathcal{O}_{\rm norm}({E})=&\{f\in \mathcal{O}({E}):  f(0)=0,
f'(0)>0\},\notag\\
\mathcal{O}_{\rm locu}({E})=&\{f\in \mathcal{O}_{\rm norm}({E}):
f'\neq
0 \,\,\mbox{on}\,\,E\},\notag\\
\mathcal{O}_{\rm univ}({E})=&\{f\in \mathcal{O}_{\rm locu}({E}):
\mbox{$f$ is univalent (one-to-one) on ${E}$}\}.\notag
\end{align}

\item card $=$ `number of elements in'.

\item $SL^1(\Omega, \lambda)$ denotes the set of subharmonic functions in $\Omega$ which are integrable
with respect to a measure $\lambda$.

\item $\nu_f$: counting function, see Definition~\ref{def:countingnumber}.

\item ${\rm Bal\,}(\mu,\lambda)$: partial balayage, see Definition~\ref{def:partialbalayage}.

\item ${\rm supp\,}\nu $: the closed support of a measure, or distribution, $\nu$.

\item $\chi_E$: the characteristic function of a set $E$.

\end{itemize}\par


\subsection{Basic set up in the univalent case}\label{sec:basicequations}

The {\bf Polubarinova-Galin equation} is the dynamical equation for the conformal map
from the unit disk onto a domain in the complex plane representing the two-dimensional view of a blob of  a viscous fluid,
which grows or shrinks due to the presence of a source or sink at one point, chosen to be the origin. The type flow in question, actually incompressible potential flow in the two dimensional
picture, is traditionally called {\bf Hele-Shaw flow} (see \cite{Vasiliev-2009}, \cite{Gustafsson-Teodorescu-Vasiliev-2014}
for historical accounts), and in recent time also {\bf Laplacian growth}, referring to the moving boundary problem.

{\bf }A smooth map $t\mapsto f(\cdot,t)\in \mathcal{O}_{\rm
univ}(\overline{\mathbb{D}})$ is a  \textbf{(strong) solution} of the Polubarinova-Galin
equation if it satisfies
\begin{equation}\label{pg1}
{\rm Re}\left[\dot{f}(\zeta,t)\overline{\zeta
f'(\zeta,t)}\right]=q(t) \quad {\rm for}\,\,\zeta\in\partial
\mathbb{D}.
\end{equation}
Here $q(t)$ is a real-valued function, which is given in
advance and which represents the strength of the source/sink. Typically $q=\pm 1$, which corresponds to injection (plus
sign) or suction (minus sign) at a rate $2\pi$. Since the
transformation $t\mapsto -t$ changes $q$ to $-q$ in (\ref{pg1}) it
is enough to discuss one of the cases $q>0$ and $q<0$. In general we
shall take $q>0$. To increase flexibility we allow $q$ to depend on time, and occasionally also to vanish.

Equation (\ref{pg1}) expresses that the image domains
$\Omega(t)=f(\mathbb{D},t)$ evolve in such a way that
\begin{equation}\label{lg}
\frac{d}{dt}\int_{\Omega(t)} hdm =2\pi{q(t)}h(0)
\end{equation}
for every function $h$ which is harmonic in a neighborhood of
$\overline{\Omega(t)}$. This means that the speed of the boundary
$\partial\Omega(t)$ in the normal direction equals $q(t)$ times the
normal derivative of the Green's function of $\Omega(t)$ with a pole
at $z=0$. The equivalence between (\ref{pg1}) and
(\ref{lg}) follows from the general formula
\begin{equation}\label{generalevolution}
\frac{d}{dt}\int_{\Omega(t)} \varphi \,dm
=\int_{\partial\D}\varphi(f(\zeta,t)){\rm
Re}\left[\dot{f}(\zeta,t)\overline{\zeta f'(\zeta,t)}\right]d\theta
\quad (\zeta=e^{\I\theta}),
\end{equation}
valid for any smooth evolution $t\mapsto f(\cdot,t)\in
\mathcal{O}_{\rm univ}(\overline{\mathbb{D}})$ and for any smooth
test function $\varphi$ in the complex plane. Cf. Lemma~\ref{lemnuf}
below.

On choosing $h(z)=z^k$, $k=0,1,2,\dots$ in (\ref{lg}) it follows
that the harmonic moments
\begin{equation}\label{defmoments}
M_{k}(t)=\frac{1}{\pi}\int_{\Omega(t)}z^{k}dm (z)= \frac{1}{2\pi
\I}\int_{\partial\mathbb{D}}f(\zeta,t)^k f^*(\zeta,t) f'(\zeta,t)d\zeta
\end{equation}
are conserved quantities, except for the first one,
which by (\ref{lg}) is related to $q(t)$ by $\frac{d}{dt}{M}_0(t)=2q(t)$. Thus
\begin{equation}\label{Mzero}
M_0(t)=M_0(0)+2Q(t),
\end{equation}
where $Q(t)$ is the accumulated source up to time $t>0$:
\begin{equation}\label{Q}
Q(t)=\int_0^t q(s)ds.
\end{equation}
Occasionally we may use $Q(t)$ also for $t<0$, in which case it is negative (if $q>0$).

Within the class of smooth (or monotone) evolutions of simply connected domains, Laplacian growth is characterized by the preservation of 
the moments $M_1, M_2, \dots$. This is a consequence of Theorem~10.13 and Corollary~10.14 in \cite{Sakai-1982}. See also Theorem~6.2
and Corollary~6.3 in \cite{Gustafsson-2004}.

One may consider the equation (\ref{pg1}) on different levels of
generality. It is natural to keep the normalization $f(0)=0$,
$f'(0)>0$, in fact the coupling to (\ref{lg}) depends on this, but
(\ref{pg1}) makes sense for any $f\in\mathcal{O}_{\rm
norm}(\overline{\mathbb{D}})$, at least as long one makes sure that
$q(t)=0$ whenever a zero of $f'$ appears on $\partial\mathbb{D}$. 
In the locally univalent case, $f\in\mathcal{O}_{\rm locu}(\overline{\mathbb{D}})$,
the mathematical treatment of (\ref{pg1}) is exactly the
same as in the `physical' case $f\in\mathcal{O}_{\rm univ}(\overline{\mathbb{D}})$. We shall then speak of a
\textbf{locally univalent solution} of the Polubarinova-Galin
equation.

When $f\in\mathcal{O}_{\rm locu}(\overline{\mathbb{D}})$, then
$\dot{f}/\zeta f^{'}\in \mathcal{O}(\overline{\mathbb{D}})$ and
equation (\ref{pg1}) can be solved for $\dot{f}$ by dividing both
sides with $|\zeta f'|^2$. The result is an equation which we shall refer
to as the {\bf L\"owner-Kufarev equation}, namely
\begin{equation}\label{lk}
\dot{f}(\zeta,t)=\zeta f'(\zeta,t)P(\zeta,t)
\quad(\zeta\in\mathbb{D}),
\end{equation}
where $P(\zeta,t)$ is the analytic function in $\D$ whose real part
has boundary value $q(t)|f'(\zeta,t)|^{-2}$ and which is normalized
by $\im P(0,t)=0$. Explicitly $P(\zeta,t)$ is given by
\begin{equation}\label{poisson}
P(\zeta,t) =\frac{1}{2\pi i}
\int_{\partial\D}\frac{q(t)}{|f'(z,t)|^2}\,\frac{z+\zeta}{z-\zeta}\,\frac{dz}{z}
\quad(\zeta\in\mathbb{D}).
\end{equation}
When $f\in\mathcal{O}_{\rm locu}(\overline{\mathbb{D}})$ then
$P\in\mathcal{O}(\overline{\mathbb{D}})$, in fact the right member
of (\ref{lk}) extends analytically as far as $f$ does (see
\cite{Gustafsson-1984}). We shall keep the notation $P=P(\zeta,t)$ also for
the analytic extension of the Poisson integral beyond
$\overline{\mathbb{D}}$.

As a general notation throughout the paper, we set
\begin{equation}\label{derivative}
g(\zeta,t)=f' (\zeta,t).
\end{equation}
The function $g$ in fact turns out to be more fundamental than $f$
itself. Of course, $f$ can be recaptured from $g$ by
$$
f(z,t)=\int_0^z g(\zeta,t)d\zeta.
$$

Part of the paper will deal with the case that $g$ is a rational
function, or perhaps better to say, $g\,d\zeta$ is a rational
differential, in other words an Abelian differential on the Riemann
sphere. If $g$ has residues then $f$ will have logarithmic poles,
besides ordinary poles. The terminology {\bf Abelian domain} for the image
domain $\Omega=f(\mathbb{D})$ has been used \cite{Varchenko-Etingof-1992}
for this case. Alternatively one may speak of $\Omega$ being a
{\bf quadrature domain} (see \cite{Gustafsson-Shapiro-2005} for
the terminology and further references), which in the present case means that a finite quadrature identity of
the kind
\begin{equation}\label{qi}
\int_\Omega h(z)dxdy=\sum_{j=1}^r c_j\int_{\gamma_j}h(z)dz
+\sum_{j=0}^\ell\sum_{k=1}^{n_j-1}a_{jk}h^{(k-1)}(z_j)
\end{equation}
holds for integrable analytic functions $h$ in $\Omega$. Here the
$z_j$ are fixed (i.e., independent of $h$) points in $\Omega$, with specifically $z_0=0$,
the $c_j$, $a_{jk}$ are fixed coefficients, and the
$\gamma_j$ are arcs in $\Omega$ with end points among the $z_j$.
This sort of structure is
stable under Hele-Shaw flow because, as is seen from (\ref{lg}),
what happens under the evolution is only that the right member is
augmented by the term $2\pi Q(t) h(0)$, where $Q(t)$ is the
accumulated source up to time $t$, see (\ref{Q}).

When $g$ is rational we shall write it on the form
\begin{equation}\label{structureg}
g(\zeta,t)
=b(t)\frac{\prod_{k=1}^{m}(\zeta-\omega_{k}(t))}{\prod_{j=1}^{n}(\zeta-\zeta_{j}(t))}
=b(t)\frac{\prod_{i=1}^{m}(\zeta-\omega_{i}(t))}{\prod_{j=1}^{\ell}(\zeta-\zeta_{j}(t))^{n_j}}.
\end{equation}
Here $m\geq n=\sum_{j=1}^\ell {n_j}$, $|\zeta_j|>1$ and repetitions
are allowed among the $\omega_k$, $\zeta_j$ to account for multiple
zeros and poles. Then, with the argument of $b(t)$ chosen so that
$g(0,t)>0$, $f\in\mathcal{O}_{\rm locu}(\overline{\mathbb{D}})$ if
and only if $|\omega_k|>1$, $|\zeta_j|>1$ for all $k$ and $j$. The
assumption $m\geq n$ means that $g\,d\zeta$, as a differential,
has at least a double pole at infinity, which the Hele-Shaw
evolution in any case will force it to have because the source/sink
at the origin creates a pole of $f$ at infinity.

The form (\ref{structureg}) is stable in time, with the sole exception that
when $m=n$ the pole of $f$ may disappear at one moment of time 
(see \cite{Gustafsson-Lin-2013}, or Proposition~\ref{lem:qi} below). 
The rightmost member of (\ref{structureg}) will be used when we need
to be explicit about the orders of the poles. The convention then is
that $\zeta_1,\dots,\zeta_\ell$ are distinct and $n_j\geq 1$. Thus
$n=\sum_{j=1}^\ell {n_j}$, and in the full sequence
$\zeta_1,\dots,\zeta_n$, the tail $\zeta_{\ell+1},\dots,\zeta_n$
will be repetitions of (some of) the $\zeta_1,\dots,\zeta_\ell$
according to their orders. In equations (\ref{qi}) and
(\ref{structureg}), $\ell$ and the $n_j$ are the same.

One can easily express the L\"owner-Kufarev equation (\ref{lk})
directly in terms of $g$, in fact, writing
$P_g$ for the Poisson integral in (\ref{poisson}), (\ref{lk}) is equivalent to
\begin{equation}\label{PGg}
\frac{\partial}{\partial t}\log g(\zeta,t)
=\zeta P_g (\zeta,t)\frac{\partial}{\partial \zeta}\log g (\zeta,t)+\frac{\partial}{\partial \zeta}(\zeta P_g(\zeta,t)).
\end{equation}
When $g$ is rational, as in (\ref{structureg}), also $P_g(\zeta,t)$ will be a rational
function
(see more precisely (\ref{structureP}) in Section~\ref{sec:rational} below), and so will
the derivatives of $\log g$: we have
\begin{equation}\label{logg}
\log g(\zeta,t) =\log
b(t)+{\sum_{k=1}^{m}\log({\zeta-\omega_{k}(t)}})
-{\sum_{j=1}^{n}\log({\zeta-\zeta_{j}(t)}}),
\end{equation}
\begin{equation}\label{loggdot}
\frac{\partial}{\partial t}\log g(\zeta,t)=\frac{\dot{b}(t)}{b(t)}
-{\sum_{k=1}^{m}\frac{\dot{\omega}_k(t)}{\zeta-\omega_{k}(t)}}
+{\sum_{j=1}^{n}\frac{\dot{\zeta}_j(t)}{\zeta-\zeta_{j}(t)}},
\end{equation}
\begin{equation}\label{loggprime}
\frac{\partial}{\partial \zeta}\log g(\zeta,t)=
{\sum_{k=1}^{m}\frac{1}{\zeta-\omega_{k}(t)}}
-{\sum_{j=1}^{n}\frac{1}{\zeta-\zeta_{j}(t)}}.
\end{equation}
Thus (\ref{PGg}) becomes an identity between rational functions.


\section{Dynamics and subordination}\label{sec:nonlocuniv}

\subsection{Generalities}

In the non locally univalent case the Polubarinova-Galin and
L\"owner-Kufarev equations are no longer equivalent. The
L\"owner-Kufarev equation is the stronger one, and solutions to it
can still be viewed as univalent mapping functions, but then onto
subdomains of a Riemann surface. The evolution of these subdomains
is monotone, which amounts to saying that the function family is a
subordination chain. Solutions to the more general
Polubarinova-Galin equation are not unique, but still satisfy a
weaker form of monotonicity, namely monotonicity of the counting
function.

\begin{definition}\label{def:countingnumber}
For any $f\in\mathcal{O}(\overline{\mathbb{D}})$, the {\bf counting
function}, or mapping degree, $\nu_{f}$ of $f$ tells how many times
a value $z\in\mathbb{C}$ is attained by $f$ in $\mathbb{D}$. It is an integer valued function
defined almost everywhere in $\mathbb{C}$ (namely outside
$f(\partial\mathbb{D})$) by
\begin{equation}\label{countingnumber}
\nu_{f}(z)={\rm card\,}\{\zeta\in \mathbb{D}:
f(\zeta)=z\}=\frac{1}{2\pi i}\int_{\partial
\mathbb{D}}d\log\left(f(\zeta)-z\right).
\end{equation}
Clearly, $f$ is univalent in $\mathbb{D}$ if and only if
$0\leq\nu_{f}\leq 1$.
\end{definition}

\begin{definition}\label{def:subordination}
Let $f,g\in\mathcal{O}_{\rm norm}({\mathbb{D}})$. We say that $f$ is
{\bf subordinate} to $g$, and write $f\prec g$, if there exists a
univalent function $\varphi:\mathbb{D}\to\mathbb{D}$ such that
$f=g\circ \varphi$. Note that $\varphi$ is automatically normalized,
hence $\varphi\in\mathcal{O}_{\rm univ}({\mathbb{D}})$.

Let $I\subset [0,\infty)$ be any interval. A map $I\ni t\mapsto
f(\cdot,t)\in \mathcal{O}_{\rm norm}({\mathbb{D}})$ is called a {\bf
subordination chain} on $I$ if $f(\cdot, s)\prec f(\cdot,t)$
whenever $s\leq t$.
\end{definition}

The following lemma shows that by increasing the level of
abstraction (lifting the maps to a Riemann surface), subordination
becomes nothing else than ordinary monotonicity. 
The result is not new (see \cite{Pommerenke-1975} for the classical case
of univalent $f$ and $g$), but we give the proof because it will be a model for how our
specific Riemann surfaces needed for the Hele-Shaw problem will be constructed.

\begin{lemma}\label{lem:riemannsurface}
Let $\{f(\cdot,t)\}_{t\in I}\subset \mathcal{O}_{\rm
norm}({\mathbb{D}})$, where $I\subset [0,\infty)$ is any interval.
Then the following are equivalent.

\begin{itemize}

\item[(i)] $\{f(\cdot,t)\}$ is a subordination chain on $I$.

\item[(ii)] There exists a Riemann surface $\mathcal{M}$, a nonconstant
analytic function $p:\mathcal{M}\to\C$ (`covering map') and univalent analytic
functions
$$
\tilde{f}(\cdot,t):\D\to \mathcal{M} \quad (t\in I)
$$
(`liftings' of the $f(\cdot,t)$) such that
\begin{itemize}
\item[(a)] $f(\zeta,t)=p(\tilde{f}(\zeta,t))$;

\item[(b)] $\tilde{f}(\D,s)\subset \tilde{f}(\D,t)$ for $s\leq t$.
\end{itemize}
\end{itemize}

\end{lemma}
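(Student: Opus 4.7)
The implication (ii)$\Rightarrow$(i) is essentially immediate. By construction (or by restricting $\mathcal M$ appropriately) one can arrange that $\tilde f(0,t)$ is a common base-point of $p^{-1}(0)$ for every $t$, in which case the inclusion $\tilde f(\D,s)\subset\tilde f(\D,t)$ for $s\le t$ permits the definition
\begin{equation*}
\varphi_{s,t}=\tilde f(\cdot,t)^{-1}\circ\tilde f(\cdot,s)\colon\D\to\D,
\end{equation*}
which is a univalent normalized self-map of $\D$ satisfying $f(\cdot,s)=p\circ\tilde f(\cdot,s)=f(\cdot,t)\circ\varphi_{s,t}$; hence $f(\cdot,s)\prec f(\cdot,t)$.

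For the substantive direction (i)$\Rightarrow$(ii) I first record a uniqueness property. Because $f'(0,t)>0$ makes each $f(\cdot,t)$ locally injective at $0$, the subordination map $\varphi_{s,t}\in\mathcal O_{\rm univ}(\D)$ associated with $f(\cdot,s)\prec f(\cdot,t)$ is determined uniquely by the identity $f(\cdot,s)=f(\cdot,t)\circ\varphi_{s,t}$: two candidates must agree on a neighborhood of $0$ and hence throughout $\D$ by analytic continuation. Uniqueness then yields the cocycle
\begin{equation*}
\varphi_{t,t}=\mathrm{id},\qquad \varphi_{r,t}=\varphi_{s,t}\circ\varphi_{r,s}\quad(r\le s\le t).
\end{equation*}
I would then realize $\mathcal M$ as the direct limit of the system $\{(\D_t,\varphi_{s,t})\}_{t\in I}$: take the disjoint union $\bigsqcup_{t\in I}\D_t$ of labelled copies of $\D$ and quotient by the equivalence relation generated by $(\zeta,s)\sim(\varphi_{s,t}(\zeta),t)$ for $s\le t$. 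Set $\tilde f(\zeta,t)=[(\zeta,t)]$ and $p([(\zeta,t)])=f(\zeta,t)$; well-posedness of $p$ is immediate from $f(\varphi_{s,t}(\zeta),t)=f(\zeta,s)$, and properties (a), (b) then hold by construction.

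Injectivity of each $\tilde f(\cdot,t)$ follows because the cocycle shows that $(\zeta_1,t)\sim(\zeta_2,t)$ iff $\varphi_{t,u}(\zeta_1)=\varphi_{t,u}(\zeta_2)$ for some $u\ge t$ in $I$, which by univalence of $\varphi_{t,u}$ forces $\zeta_1=\zeta_2$. The Riemann surface atlas on $\mathcal M$ is furnished by the inverses of the $\tilde f(\cdot,t)$, whose transition maps $\tilde f(\cdot,t)^{-1}\circ\tilde f(\cdot,s)$ equal $\varphi_{s,t}$ (or its inverse when $s\ge t$, via the cocycle) and are therefore holomorphic; $p$ is analytic because each $p\circ\tilde f(\cdot,t)=f(\cdot,t)$ is. The main obstacle I anticipate is verifying that $\mathcal M$ is Hausdorff: given inequivalent classes $[(\zeta_1,t_1)]\ne[(\zeta_2,t_2)]$ with $t_1\le t_2$, one promotes the first to $\D_{t_2}$ via $\varphi_{t_1,t_2}$ and chooses disjoint open disks around $\varphi_{t_1,t_2}(\zeta_1)$ and $\zeta_2$ in $\D_{t_2}$; univalence of every $\varphi_{s,t_2}$ then forces the saturations of these disks under $\sim$ to remain disjoint across $\bigsqcup_u\D_u$, providing the required separating open sets. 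This is the same colimit-of-disks Hausdorff verification that underlies the classical L\"owner-chain construction in Pommerenke, and it transfers essentially verbatim to the normalized but possibly non-univalent setting at hand.
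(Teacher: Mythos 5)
Your proof is correct and follows essentially the same route as the paper's: the colimit $\bigsqcup_t\D_t/\!\sim$ you build via the cocycle $\varphi_{r,t}=\varphi_{s,t}\circ\varphi_{r,s}$ is the nested union $\bigcup_t\mathcal{M}_t$ of abstract copies of $\D$ that the paper uses, and the covering map and liftings are defined in the same way. The Hausdorff and injectivity verifications you supply are worthwhile details the paper leaves implicit (though in the paper's nested-union formulation Hausdorffness is immediate), while your remark in the (ii)$\Rightarrow$(i) direction about ``arranging'' a common base-point $\tilde f(0,t)$ is a bit off, since in that direction the data are given rather than constructed; the paper instead relies on its claim (Definition~\ref{def:subordination}) that a subordination function is automatically normalized.
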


\begin{proof}
The proof that $(ii)$ implies $(i)$ is just a straight-forward
verification, with the subordination functions defined by
\begin{equation}\label{defvarphi}
\varphi(\zeta,s,t)=\tilde{f}^{-1}(\tilde{f}(\zeta,s),t)
\end{equation}
for $s\leq t$, and where $\tilde{f}^{-1}(\zeta,t)$ denotes the inverse of
$\tilde{f}(\zeta,t)$ with respect to $\zeta$.

To prove the opposite, assume $(i)$. We have to construct the
Riemann surface $\mathcal{M}$ and the covering map $p$. For each $t\in I$, let
$\D_t$ be a copy of $\D$ and let $\mathcal{M}_t$ be $\D_t$ considered as an
abstract Riemann surface (for which $\D_t$ serves as coordinate space).
We define a covering map
$$
p(\cdot,t): \mathcal{M}_t\to \C
$$
by declaring that it in the coordinate space $\D_t$ shall be
represented by
$$
f(\cdot,t): \D_t\to \C.
$$
When $s\leq t$ we have the embedding $\varphi(\cdot,s,t):
\D_s\to\D_t$ coming from the assumed subordination, which we on the
level of the abstract Riemann surfaces consider as an inclusion map
\begin{equation}\label{inclusionR}
\mathcal{M}_s\subset \mathcal{M}_t.
\end{equation}
Note that these embeddings and inclusions commute with the covering
maps because of the subordination relations
\begin{equation}\label{subordination}
f(\varphi(\zeta,s,t),t)=f(\zeta,s)  \quad (s\leq t).
\end{equation}

In view of the inclusions (\ref{inclusionR}) we may define
$$
\mathcal{M}=\cup_{t\in I} \mathcal{M}_t.
$$
This is a Riemann surface because each point belongs to some $\mathcal{M}_t$,
and there it has a neighborhood (e.g., all of $\mathcal{M}_t$) which can be
identified with an open subset of the complex plane ($\mathcal{M}_t\cong
\D_t\cong\D$), and the coordinates on $\mathcal{M}$ so obtained are related by
invertible analytic functions (the $\varphi(\cdot,s,t)$). The
covering map $p:\mathcal{M}\to\C$ is defined by declaring that on $\mathcal{M}_t$ it
shall agree with $p(\cdot,t)$. Again, this is consistent.

Finally, the map $f(\cdot,t): \D\to \C$ lifts to
$$
\tilde{f}(\cdot,t): \D\to \mathcal{M}_t\subset \mathcal{M}
$$
by declaring that on identifying $\mathcal{M}_t$ with $\D_t$ it shall simply
be the identity map: $\tilde{f}(\zeta,t)=\zeta\in\D_t$ for
$\zeta\in\D$. Also this is consistent.

In the last picture, the evolution maps $\tilde{f}(\zeta,t)$ become
trivial, while the covering maps are nontrivial
($p(\zeta,t)=f(\zeta,t)$):
$$
\D\stackrel{\mathrm{id}}{\longrightarrow} \D_t
\stackrel{f(\cdot,t)}{\longrightarrow} \C.
$$
For visualization it may however be better to have the view
$$
\D\stackrel{\tilde{f}(\cdot,t)}{\longrightarrow}\mathcal{M}_t
\stackrel{\mathrm{proj}}{\longrightarrow} \C
$$
in which the evolution maps $\tilde{f}(\zeta,t)$ really are liftings
of the $f(\zeta,t)$, while the covering maps
$p(\cdot,t)$ are trivial identifications (local identity maps,
except at branch points).

Now, when $\mathcal{M}$ and $p$ have been constructed the rest of the proof
are easy verifications (omitted).
\end{proof}

\begin{example}
The functions
$$
f(\zeta,t)=\frac{\zeta(t^3\zeta-2t^2+1)}{\zeta-t}
$$
can be shown to make up a non-univalent subordination family on the
interval $1< t <\infty$. The derivative $f'(\zeta,t)$ vanishes at
$\zeta=t^{-1}\in\D$. The Riemann surface $\mathcal{M}$ appearing in
Lemma~\ref{lem:riemannsurface} consists, when visualized as a covering
surface over $\C$, of two copies of $\C$ joined by a branch
point at $f(t^{-1},t)=1$. This example will be further discussed in
Example~\ref{ex:sakai}, where also partial proofs of the above statements can be found.
\end{example}

\begin{lemma}
For $f,g\in\mathcal{O}_{\rm norm}({\mathbb{D}})$, $f\prec g$ implies
$\nu_f\leq \nu_g$ (almost everywhere).
\end{lemma}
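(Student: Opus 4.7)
The plan is to unfold the definition of subordination and exhibit an explicit injection from the $f$-preimages of $z$ in $\mathbb{D}$ into the $g$-preimages of $z$ in $\mathbb{D}$.

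First I would invoke the hypothesis to pick $\varphi\in\mathcal{O}_{\rm univ}(\mathbb{D})$ with $\varphi(\mathbb{D})\subset\mathbb{D}$ and $f=g\circ\varphi$. I would then fix an arbitrary $z\in\mathbb{C}$ lying outside the exceptional set $f(\partial\mathbb{D})\cup g(\partial\mathbb{D})$ where either counting function fails to be defined; this set has planar Lebesgue measure zero, so any inequality proved off of it holds almost everywhere. For each $\zeta\in f^{-1}(z)\cap\mathbb{D}$ set $\eta=\varphi(\zeta)\in\varphi(\mathbb{D})\subset\mathbb{D}$; then $g(\eta)=g(\varphi(\zeta))=f(\zeta)=z$, so $\eta\in g^{-1}(z)\cap\mathbb{D}$. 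Because $\varphi$ is univalent, the map $\zeta\mapsto\varphi(\zeta)$ is injective, which realizes an inclusion
$$
f^{-1}(z)\cap\mathbb{D}\;\hookrightarrow\;g^{-1}(z)\cap\mathbb{D}.
$$
Taking cardinalities gives $\nu_f(z)\le \nu_g(z)$.

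A small point of care: the argument-principle formula used in Definition~\ref{def:countingnumber} counts preimages with multiplicity, while the set-cardinality expression does not. The two agree for every $z$ outside the set of critical values of $f$ and of $g$, which is countable and hence a further null set that may be absorbed into the exceptional set. On the complement the injection above delivers the desired inequality.

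No genuine obstacle is expected; the proof is a one-line unwinding of definitions once $f=g\circ\varphi$ is available. If one prefers a conceptual route, the same conclusion drops out of Lemma~\ref{lem:riemannsurface} applied to the two-element chain $\{f,g\}$: lift to univalent $\tilde f,\tilde g\colon\mathbb{D}\to\mathcal{M}$ with covering map $p$ and $\tilde f(\mathbb{D})\subset\tilde g(\mathbb{D})$, and observe that $\nu_f(z)=\#(p^{-1}(z)\cap\tilde f(\mathbb{D}))\le\#(p^{-1}(z)\cap\tilde g(\mathbb{D}))=\nu_g(z)$ by monotonicity of the lifted image domains.
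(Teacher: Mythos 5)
Your argument is correct, and it takes a genuinely different route from the paper's. The paper works directly with the contour-integral (argument-principle) representation of $\nu_f$: it substitutes $f = g\circ\varphi$, changes variables $\zeta\mapsto\varphi(\zeta)$ to get an integral over $\varphi(\partial\mathbb{D})$, and then uses that this curve bounds a subdomain $\varphi(\mathbb{D})\subset\mathbb{D}$, so the count of zeros of $g-z$ inside it can only be smaller. Your proof, by contrast, works with the set-cardinality representation and exhibits an explicit injection $\zeta\mapsto\varphi(\zeta)$ from $f^{-1}(z)\cap\mathbb{D}$ into $g^{-1}(z)\cap\mathbb{D}$. That is more elementary -- no contour integration, no change of variables -- and it buys a bit of extra honesty: you make explicit the multiplicity caveat (the argument-principle count and the raw cardinality agree only away from critical values, a further null set) which the paper's definition and proof silently elide. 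The paper's approach buys compactness and avoids the multiplicity discussion entirely, since it never leaves the winding-number picture, and it does not need to isolate critical values. Both are sound; your alternative route via Lemma~\ref{lem:riemannsurface} is also valid and in fact exposes the geometric content (monotone inclusion of lifted image domains) more clearly than either proof.
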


\begin{proof}
This is immediate from a change of variable in the integral
appearing in $\nu_f$: assuming $f=g\circ \varphi$ we have, using
that $\varphi$ is univalent,
$$
\nu_f(z)=\frac{1}{2\pi i}\int_{\partial
\mathbb{D}}d\log\left(f(\zeta)-z\right) =\frac{1}{2\pi
i}\int_{\partial \mathbb{D}}d\log\left(g(\varphi(\zeta))-z\right)
$$
$$
=\frac{1}{2\pi i}\int_{\varphi(\partial
\mathbb{D})}d\log\left(g(\zeta)-z\right) \leq \frac{1}{2\pi
i}\int_{\partial \mathbb{D}}d\log\left(g(\zeta)-z\right) =\nu_g(z).
$$
\end{proof}


\subsection{The Polubarinova-Galin versus the L\"owner-Kufarev equation}

The relationship between the Polubarinova-Galin and the
L\"owner-Kufarev equations in the non-univalent case is the
following.

\begin{theorem}\label{lkpg}\label{lem:subordination}
Let $I\ni t\mapsto f(\cdot,t)\in \mathcal{O}_{\rm
norm}(\overline{\mathbb{D}})$ be smooth on some time interval $I$
and assume that $f'\ne 0$ on $\partial\mathbb{D}$ on this 
interval. Then for $q(t)\geq 0$ the following are equivalent.
\begin{itemize}

\item[(i)] $f(\zeta,t)$ solves the L\"owner-Kufarev equation
(\ref{lk}).

\item[(ii)] $f(\zeta,t)$ solves the Polubarinova-Galin equation
(\ref{pg1}) and $\dot{f}(\omega,t)=0$ for every root
$\omega\in\mathbb{D}$ of ${f}'(\omega,t)=0$.

\item[(iii)] $f(\zeta,t)$ solves the Polubarinova-Galin equation
(\ref{pg1}) and $\{f(\cdot,t)\}$ is a subordination chain.

\end{itemize}

\end{theorem}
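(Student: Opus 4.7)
The plan is to prove the equivalences by a cycle (i)$\Rightarrow$(iii)$\Rightarrow$(ii)$\Rightarrow$(i).

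\textbf{(i)$\Rightarrow$(iii):} The Polubarinova-Galin part is an immediate substitution: plugging $\dot f=\zeta f'P$ into the left-hand side of (\ref{pg1}) gives $|\zeta|^2|f'|^2\re P$, which on $\partial\D$ equals $q(t)$ by the prescribed boundary value of $\re P$. For the subordination part, since $q\ge 0$, $P$ is a Herglotz function in $\D$ ($\re P\ge 0$). For $s\le t$, define transition functions $\varphi(\zeta,s,t)$ by the Cauchy problem
\begin{equation*}
\frac{\partial\varphi}{\partial t}=-\varphi\,P(\varphi,t),\qquad \varphi(\zeta,s,s)=\zeta.
\end{equation*}
A direct computation yields $\frac{d}{dt}|\varphi|^2=-2|\varphi|^2\re P(\varphi,t)\le 0$, so $\varphi(\cdot,s,t)\in\D$; uniqueness of ODE solutions makes it univalent, with $\varphi(0,s,t)=0$ and $\varphi'(0,s,t)>0$ read off from the equation at $\zeta=0$. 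Using (\ref{lk}) one then checks $\frac{\partial}{\partial t}f(\varphi(\zeta,s,t),t)\equiv 0$, hence $f(\varphi(\zeta,s,t),t)=f(\zeta,s)$, which is exactly $f(\cdot,s)\prec f(\cdot,t)$.

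\textbf{(iii)$\Rightarrow$(ii):} From the subordination identity $f(\zeta,s)=f(\varphi(\zeta,s,t),t)$ together with $\varphi(\zeta,t,t)=\zeta$, differentiating in $s$ and setting $s=t$ yields $\dot f(\zeta,t)=f'(\zeta,t)\,V(\zeta,t)$ with $V(\zeta,t):=\partial_s\varphi(\zeta,s,t)|_{s=t}$. Hence $\dot f(\omega,t)=0$ at every zero $\omega\in\D$ of $f'(\cdot,t)$, and (\ref{pg1}) is already part of (iii).

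\textbf{(ii)$\Rightarrow$(i):} Set $F(\zeta,t):=\dot f(\zeta,t)/(\zeta f'(\zeta,t))$. Dividing (\ref{pg1}) by $|\zeta f'|^2=|f'|^2$ gives $\re F=q/|f'|^2=\re P$ on $\partial\D$. The singularity of $F$ at $\zeta=0$ is removable, since $f(0,t)=0$ forces $\dot f(0,t)=0$ while $f'(0,t)>0$; and the hypothesis $\dot f(\omega,t)=0$ removes the pole at each simple interior zero $\omega$ of $f'$. Once $F$ is analytic in $\overline\D$, the difference $F-P$ has vanishing boundary real part, hence $F-P=i\,c(t)$ for some real $c(t)$. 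Finally, $F(0,t)=\partial_t f'(0,t)/f'(0,t)\in\R$ and $\im P(0,t)=0$ by the normalization of $P$, forcing $c(t)=0$, hence $F=P$, which is (\ref{lk}).

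\textbf{Main obstacle.} The most delicate step is removing the singularities of $F$ at interior zeros of $f'$ of multiplicity $n\ge 2$, where $1/f'$ has a pole of order $n$: the bare condition $\dot f(\omega,t)=0$ cancels only a simple pole. To propagate the required vanishing $\dot f^{(k)}(\omega,t)=0$ for $k=0,1,\dots,n-1$, I would, at times where the multiplicity is locally constant, track a smooth branch $\omega(t)$ of zeros and repeatedly differentiate the identities $f^{(k)}(\omega(t),t)\equiv 0$ ($k=1,\dots,n$) in $t$, combining with the chain-rule form $\frac{d}{dt}f(\omega(t),t)=0$ to obtain the full tower of derivative identities. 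At isolated times where multiplicities collide, the conclusion would then follow by continuity in $t$.
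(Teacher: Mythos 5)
Your proof is correct in its essentials and uses the same building blocks as the paper, but you organize them into a different cycle and you treat the multiplicity issue in (ii) more ambitiously than the paper does. The paper proves (i)$\Leftrightarrow$(ii) directly, then (i)$\Rightarrow$(iii) and (iii)$\Rightarrow$(ii); you run (i)$\Rightarrow$(iii)$\Rightarrow$(ii)$\Rightarrow$(i). Your (i)$\Rightarrow$(iii) is the same Pommerenke-style ODE argument the paper uses, and your (iii)$\Rightarrow$(ii) (differentiate the subordination identity in $s$ and set $s=t$ to get $\dot f = f'\cdot V$ with $V$ analytic) is a variant of the paper's computation (which differentiates in $t$ instead); notice that your version actually yields the full analyticity of $\dot f/(\zeta f')$ on $\overline{\D}$, i.e., the vanishing with multiplicities, not just the literal statement in (ii) — a slightly stronger conclusion than the literal (ii), which makes the cycle close comfortably. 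The one genuine divergence from the paper is in (ii)$\Rightarrow$(i): the paper dispatches the multiplicity problem by fiat, declaring in the proof that ``the additional condition in (ii) means more precisely (taking multiplicities into account) that $\dot f/(\zeta f')\in\mathcal{O}(\overline{\D})$,'' after which (i)$\Leftrightarrow$(ii) is immediate. You instead take (ii) at face value and try to derive the higher-order vanishing. Your mechanism (track a smooth branch $\omega(t)$ by the implicit function theorem where the multiplicity is locally constant, differentiate $f^{(k)}(\omega(t),t)\equiv 0$ in $t$, and use $f^{(k+1)}(\omega(t),t)=0$ for $k<n$ to kill the extra terms) is sound there, and the appeal to continuity at isolated collision times can indeed be made rigorous by a Hermite/divided-difference argument. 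What you leave unaddressed is whether, for merely $C^\infty$ (non-analytic) evolutions, the set of collision times is tame enough for the continuity step to propagate; a cleaner route is to observe that for $t$ near $t_0$ the polar part of $\dot f(\cdot,t)/(\zeta f'(\cdot,t))$ at any limit point $\omega_0$ of zeros is controlled uniformly by the Cauchy integral over a small fixed circle around $\omega_0$ (on which $f'\ne 0$), and that this polar part vanishes for $t$ in a set accumulating at $t_0$; hence it vanishes at $t_0$ by continuity of that contour integral. With that patch your argument is complete, and it has the virtue of proving the theorem with (ii) read literally, whereas the paper quietly strengthens (ii).
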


\begin{remark}
As for $(ii)$, note that $\dot{f}(\omega (t),t)=\frac{d}{dt}f(\omega(t),t)$.
\end{remark}

\begin{proof}

The additional condition in $(ii)$ means more precisely (taking
multiplicities into account) that
\begin{equation}\label{invariant}
\frac{\dot{f}(\zeta,t)}{\zeta
{f}^{'}(\zeta,t)}\in\mathcal{O}(\overline{\mathbb{D}})
\end{equation}
After dividing both members in (\ref{pg1}) by $|\zeta f'(\zeta,t)|^2$ and using the
defining properties (\ref{poisson}), (\ref{derivative}) of
$P(\zeta,t)$ this condition is seen to be exactly what is needed to
pass between (\ref{pg1}) and (\ref{lk}). Thus $(i)$ and $(ii)$ are
equivalent.

Assume next that $(iii)$ holds. That  $\{f(\cdot,t)\}$ is a
subordination chain means that for $s\leq t$ there exist univalent
functions $\varphi (\cdot, s,t):\D\to\D$ such that
(\ref{subordination}) holds. By differentiating
(\ref{subordination}) with respect to $t$ it immediately follows
that (\ref{invariant}) holds. Thus $(iii)$ implies $(ii)$.

We finally prove that  $(i)$ implies $(iii)$. This is done exactly
as in the corresponding proof for L\"owner chains of univalent
functions in Chapter~6 of \cite{Pommerenke-1975}. To construct the
subordination functions $\varphi (\cdot,s,t)$ one considers, for
given $s\geq 0$ and $\zeta\in\mathbb{D}$, the initial value problem
\begin{equation}\label{lk1}
\begin{cases}
\frac{dw}{dt}=-wP(w,t), \quad t\geq s,\\
w(s)=\zeta.
\end{cases}
\end{equation}
It has a unique solution $w=w(t)$ defined on the time interval on
which $f$, and hence $P$, is defined. In terms of $w(t)$ we then
define, for $s\leq t$,
$$
\varphi(\zeta,s,t) =w(t).
$$
Since different trajectories for (\ref{lk1}) never intersect
$\varphi(\zeta,s,t)$ is a univalent function of $\zeta$ in the unit
disk, and using the chain rule and (\ref{lk}) one sees that
$\frac{d}{dt}f(\varphi(\zeta,s,t),t)=0 $. Thus
$f(\varphi(\zeta,s,t),t)$ is constantly equal to its initial value
at $t=s$, which is $f(\zeta,s)$. This proves the subordination.

\end{proof}

The Polubarinova-Galin equation itself is equivalent to a more
general version of the L\"owner-Kufarev equation, as follows.

\begin{theorem}\label{thm:R}
Let $I\ni t\mapsto f(\cdot,t)\in \mathcal{O}_{\rm
norm}(\overline{\mathbb{D}})$ be smooth on some time interval $I$
and assume that $f'\ne 0$ on $\partial\mathbb{D}$ on this time
interval. Then $f(\zeta,t)$ solves the Polubarinova-Galin equation (\ref{pg1}) if
and only if
\begin{equation}\label{lkgen}
\dot{f}(\zeta,t)=\zeta
f'(\zeta,t)\left(P(\zeta,t)+R(\zeta,t)\right),
\end{equation}
where $P$ is the Poisson integral (\ref{poisson}) and where
$R(\zeta,t)$ is any function of the form
\begin{equation}\label{definitionR}
R(\zeta,t)=-\I\im\sum_{\omega_j\in\D}\sum_{k=1}^{r_j}\frac{2B_{jk}(t)}{(-\omega_j(t))^k}
+\sum_{\omega_j\in\D}\sum_{k=1}^{r_j}
\left(  \frac{2B_{jk}(t)}{(\zeta -\omega_j(t))^k}-
\frac{2\overline{B_{jk}(t)}\zeta^k}{(1 -\overline{\omega_j(t)}\zeta)^k}
\right).
\end{equation}
Here $\{\omega_j\}$ are the zeros of $f'$ in $\D$ (necessarily
finitely many), $r_j$ is the order of the zero at $\{\omega_j(t)\}$,
and  $B_{jk}(t)$ are arbitrary smooth functions of $t$.
\end{theorem}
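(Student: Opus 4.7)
The plan is to prove both implications by recasting the Polubarinova--Galin equation as a boundary condition on the imaginary part of a meromorphic function, and then classifying the allowable ``correction'' $R$ via Schwarz reflection.

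First I would handle the easy direction, namely that (\ref{lkgen}) together with (\ref{definitionR}) implies (\ref{pg1}). Setting $\zeta=e^{\I\theta}\in\partial\D$ and using $\bar\zeta=1/\zeta$, one verifies the key pointwise identity
$$
\overline{\frac{1}{(\zeta-\omega_j)^k}}=\frac{\zeta^k}{(1-\overline{\omega_j}\zeta)^k}\qquad (|\zeta|=1),
$$
which shows that the finite sums in (\ref{definitionR}) cancel pairwise when one adds $R+\overline{R}$; the leading $-\I\,\mathrm{Im}(\cdot)$ constant is purely imaginary, so $\mathrm{Re}\, R=0$ on $\partial\D$. Since $\mathrm{Re}\, P=q/|f'|^2$ there by definition of the Poisson integral, multiplying (\ref{lkgen}) by $\overline{\zeta f'}$ and taking real parts on $\partial\D$ gives $\mathrm{Re}[\dot f\,\overline{\zeta f'}]=|\zeta f'|^2\mathrm{Re}(P+R)=q$, which is (\ref{pg1}).

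For the converse, assume (\ref{pg1}) and set
$$
R(\zeta,t)=\frac{\dot f(\zeta,t)}{\zeta f'(\zeta,t)}-P(\zeta,t).
$$
Because $f(0,t)=0$ forces $\dot f(0,t)=0$, the quotient $\dot f/(\zeta f')$ has no pole at the origin; because $f'$ has no zeros on $\partial\D$, its zeros in $\D$ form a finite set $\{\omega_j\}$ of orders $r_j$, and $R$ is meromorphic on $\overline{\D}$ with poles only at the $\omega_j$ of orders at most $r_j$. Dividing (\ref{pg1}) by $|\zeta f'|^2$ and using that $\mathrm{Re}\, P=q/|f'|^2$ on $\partial\D$, one sees that $\mathrm{Re}\, R=0$ on $\partial\D$.

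The main step is then to classify all such $R$. The condition $\mathrm{Re}\, R=0$ on $\partial\D$ lets us extend $R$ by Schwarz reflection, $R(\zeta)=-\overline{R(1/\bar\zeta)}$ for $|\zeta|>1$, to a rational function on $\mathbb{P}^1$ whose only poles are at $\omega_j\in\D$ and the mirror points $1/\overline{\omega_j}$, each of order at most $r_j$. Writing the partial fraction expansion of this extension at the interior poles,
$$
\sum_j\sum_{k=1}^{r_j}\frac{2B_{jk}(t)}{(\zeta-\omega_j(t))^k},
$$
and applying the reflection identity already used in the first part shows that the reflected principal parts at $1/\overline{\omega_j}$ must be exactly $-2\overline{B_{jk}}\,\zeta^k/(1-\overline{\omega_j}\zeta)^k$, so that the pairs cancel on $\partial\D$. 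This determines $R$ up to an additive constant $c$, and the condition $\mathrm{Re}\, R=0$ on $\partial\D$ forces $\mathrm{Re}\, c=0$. Finally, since $f'(0,t)>0$ and $\mathrm{Im}\, P(0,t)=0$, one has $R(0,t)=\frac{d}{dt}\log f'(0,t)-P(0,t)\in\R$; evaluating the partial-fraction expression at $\zeta=0$ then pins down $c=-\I\,\mathrm{Im}\sum_{j,k}2B_{jk}/(-\omega_j)^k$, giving exactly the form (\ref{definitionR}).

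The only slightly delicate point is checking that the reflection really yields a \emph{rational} function with poles of the correct orders (rather than essential singularities appearing through the reflection), which is a standard consequence of Morera plus the Schwarz reflection principle for meromorphic functions across a real-analytic arc; all remaining work is bookkeeping of partial fractions and the reflection identity $\overline{(\zeta-\omega)^{-k}}=\zeta^k(1-\bar\omega\zeta)^{-k}$ on $|\zeta|=1$.
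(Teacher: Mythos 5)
Your proof is correct and takes essentially the same approach as the paper; the paper's proof is a terse assertion that $R$ must satisfy $\re R=0$ on $\partial\D$, $\im R(0,t)=0$, and have only singularities killed by $f'$, and you supply the Schwarz-reflection classification that makes the "only if" direction rigorous, which the paper treats as immediate.
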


\begin{proof}

The proof of Theorem~\ref{thm:R} is immediate since the additional
term $R(\zeta,t)$ satisfies
$$
\re R(\zeta,t)=0, \quad \zeta\in\partial\D,
$$
$$
\im R(0,t)=0
$$
and is allowed to contain exactly those kinds of singularities in
$\D$ which will be killed by the factor $f'$ in front of it in
(\ref{lkgen}).
The first term in (\ref{definitionR}) is just the normalization
assuring that $\im (P(0,t)+R(0,t))=0$, and the other terms exchange
polar parts between $\D$ and $\C\setminus\overline{\D}$ without
changing the real part on $\partial\D$.

\end{proof}

Theorem~\ref{thm:R} holds for general solutions of the Polubarinova-Galin
equation, but will become of particular interest when we discuss rational
solutions in Section~\ref{sec:rational}.


\section{Weak solutions}\label{sec:weaksolutions}

\subsection{Preliminaries and definition}

Some of our main results will be formulated in terms of variational inequality weak solutions, just called weak solutions
for short, which are expressed in terms of time independent test functions which are subharmonic in the domains $\Omega(t)$.
We shall also need general smooth test functions, like $\Phi$ below.
When such test functions are pulled back to the unit disk via the mapping functions $f$ they become time dependent,
and the time and space derivatives will be coupled.
Indeed, if $\Phi(z)$ is any smooth function in $\C$ then, by the chain rule, the
composed function $\Psi(\zeta,t)=\Phi(f(\zeta,t))$, defined for $\zeta\in \overline{\D}$,
satisfies
\begin{equation}\label{chainrule1}
|f'(\zeta,t)|^2\frac{\partial\Psi}{\partial t}=\dot{f}(\zeta,t)\overline{f'(\zeta,t)}\frac{\partial\Psi}{\partial \zeta}
+\overline{\dot{{f}}(\zeta,t)}{f'(\zeta,t)}\frac{\partial\Psi}{\partial \bar\zeta}.
\end{equation}
When working in $\D$ we shall need test functions $\Psi$ which satisfy just (\ref{chainrule1}) in itself, without necessarily
being of the form $\Phi\circ f$ for some $\Phi$.

\begin{lemma}\label{lemnuf}
For any smooth evolution $t\mapsto f\in\mathcal{O}_{\rm
norm}(\overline{\D})$ and any smooth function $\Psi(\zeta,t)$ which
satisfies (\ref{chainrule1}) we have
\begin{equation}\label{ddtpsif}
\frac{d}{dt}\int_{\mathbb{D}} \Psi(\zeta,t)|f'(\zeta,t)|^2
\,dm(\zeta) =\int_{\partial\D}\Psi (\zeta,t)
{\re}\left[\dot{f}(\zeta,t)\overline{\zeta
f'(\zeta,t)}\right]d\theta,
\end{equation}
where $\zeta=e^{\I\theta}$ in the right member.
\end{lemma}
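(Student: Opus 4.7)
The plan is to differentiate under the integral sign, rewrite the integrand as a pure divergence using the hypothesis (\ref{chainrule1}), and then apply Stokes' theorem to reduce the area integral to a boundary integral. I expect no real obstacle beyond careful bookkeeping with Wirtinger derivatives and sign conventions.

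First I compute, by the product rule and the fact that $\frac{\partial}{\partial t}|f'|^2 = \overline{f'}\,\frac{\partial \dot f}{\partial\zeta} + f'\,\frac{\partial \overline{\dot f}}{\partial\bar\zeta}$ (using $\dot{f'}=\partial\dot f/\partial\zeta$),
\begin{equation*}
\frac{\partial}{\partial t}\bigl(\Psi|f'|^2\bigr)
= |f'|^2\frac{\partial\Psi}{\partial t} + \Psi\,\overline{f'}\,\frac{\partial\dot f}{\partial\zeta} + \Psi\,f'\,\frac{\partial\overline{\dot f}}{\partial\bar\zeta}.
\end{equation*}
Substituting the hypothesis (\ref{chainrule1}) for $|f'|^2\,\partial_t\Psi$ and then grouping the terms that share $\overline{f'}$ and those that share $f'$, and using the holomorphy of $f'$ (so $\partial\overline{f'}/\partial\zeta=0$ and $\partial f'/\partial\bar\zeta=0$), I recognize the result as a pure divergence:
\begin{equation*}
\frac{\partial}{\partial t}\bigl(\Psi|f'|^2\bigr)
= \overline{f'}\,\frac{\partial}{\partial\zeta}(\Psi\dot f) + f'\,\frac{\partial}{\partial\bar\zeta}(\Psi\overline{\dot f}).
\end{equation*}
This identification is the only substantive step; everything else is Stokes.

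Next I apply the complex form of Stokes' theorem to each term. For the first, $d(\Psi\dot f\,\overline{f'}\,d\bar\zeta) = \overline{f'}\,\frac{\partial}{\partial\zeta}(\Psi\dot f)\,d\zeta\wedge d\bar\zeta = -2\I\,\overline{f'}\,\frac{\partial}{\partial\zeta}(\Psi\dot f)\,dm$, so
\begin{equation*}
\int_{\D}\overline{f'}\,\frac{\partial}{\partial\zeta}(\Psi\dot f)\,dm
= \frac{\I}{2}\int_{\partial\D}\Psi\,\dot f\,\overline{f'}\,d\bar\zeta.
\end{equation*}
Parametrizing $\zeta=e^{\I\theta}$ on $\partial\D$ gives $d\bar\zeta = -\I\bar\zeta\,d\theta$, so this boundary integral equals $\frac{1}{2}\int_0^{2\pi}\Psi\,\dot f\,\overline{\zeta f'}\,d\theta$. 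An entirely parallel computation for the second term, with $d\zeta = \I\zeta\,d\theta$, yields $\frac{1}{2}\int_0^{2\pi}\Psi\,\overline{\dot f}\,\zeta f'\,d\theta$.

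Adding the two boundary integrals produces $\int_0^{2\pi}\Psi\,\re[\dot f\,\overline{\zeta f'}]\,d\theta$, which is exactly the right member of (\ref{ddtpsif}). Since the left member was shown to equal the sum of the same two area integrals, the identity follows. The only place where smoothness of $f$ near $\overline{\D}$ is used is in justifying differentiation under the integral and Stokes' theorem, both of which are routine.
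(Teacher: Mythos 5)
Your proof is correct and follows essentially the same route as the paper: differentiate under the integral sign, substitute the hypothesis (\ref{chainrule1}) to recognize the integrand as $\bar{f'}\,\partial_\zeta(\Psi\dot f) + f'\,\partial_{\bar\zeta}(\Psi\overline{\dot f})$, and apply Stokes (the paper does the two terms together, you do them separately, which is only a cosmetic difference).
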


\begin{proof}
Differentiation under the integral sign gives, using (\ref{chainrule1}),
$$
\frac{d}{dt}\int_{\mathbb{D}} \Psi(\zeta,t)|f'(\zeta,t)|^2 \,dm(\zeta)
=\int_{\D}(\dot{f}\bar{f'}\frac{\partial\Psi}{\partial \zeta}
+\dot{\bar{f}}{f'}\frac{\partial\Psi}{\partial \bar\zeta}+\Psi\dot{f'}\bar{f'}
+\Psi f'\dot{\bar{f'}})\,dm
$$
$$
=\frac{1}{2\I}\int_{\D}(\bar{f'}\frac{\partial}{\partial\zeta}(\dot{f}\Psi)
+f' \frac{\partial}{\partial\bar\zeta}(\dot{\bar{f}}\Psi))d\bar\zeta d\zeta
=\frac{1}{2\I}\int_{\partial\D}\Psi(\dot{\bar{f}}f'd\zeta-\dot{f}\bar{f'}d\bar\zeta)
$$
$$
=\int_{\partial\D}\Psi (\zeta,t)
{\re}\left[\dot{f}(\zeta,t)\overline{\zeta
f'(\zeta,t)}\right]d\theta.
$$
\end{proof}

\begin{corollary}\label{cornuf0}
For any smooth evolution $t\mapsto f(\cdot,t)\in \mathcal{O}_{\rm norm}(\overline{\mathbb{D}}))$ and any smooth function $\Phi$ in
$\C$ we have
\begin{equation}
\label{windingnumber}
\frac{d}{dt}\int_{\mathbb{C}}\Phi(z)\nu_{f(\cdot,t)}(z)dm(z)
=\int_{0}^{2\pi}\Phi (f(\zeta,t))
{\re}\left[\dot{f}(\zeta,t)\overline{\zeta
f'(\zeta,t)}\right]d\theta.
\end{equation}
\end{corollary}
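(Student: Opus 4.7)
The plan is to derive the corollary directly from Lemma \ref{lemnuf} by choosing the test function $\Psi(\zeta,t) = \Phi(f(\zeta,t))$. First I would verify that this $\Psi$ is admissible, i.e.\ that it satisfies the coupling identity (\ref{chainrule1}). This is immediate from the ordinary chain rule: differentiating $\Psi = \Phi \circ f$ in $t$, $\zeta$, $\bar\zeta$ produces $\dot\Psi = \Phi_z\dot f + \Phi_{\bar z}\dot{\bar f}$, $\Psi_\zeta = \Phi_z f'$, $\Psi_{\bar\zeta} = \Phi_{\bar z}\overline{f'}$, and multiplying $\dot\Psi$ by $|f'|^2$ reproduces the right-hand side of (\ref{chainrule1}).

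Next I would match the right-hand side of (\ref{ddtpsif}) with that of (\ref{windingnumber}). With the substitution $\Psi(\zeta,t) = \Phi(f(\zeta,t))$, the boundary integral on the right of (\ref{ddtpsif}) becomes precisely $\int_0^{2\pi}\Phi(f(\zeta,t))\,\mathrm{Re}[\dot f(\zeta,t)\overline{\zeta f'(\zeta,t)}]d\theta$, which is the desired right-hand side.

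The only remaining step is to identify the left-hand sides. This uses the standard change-of-variables formula for analytic maps with multiplicity,
\begin{equation*}
\int_{\mathbb{D}} \Phi(f(\zeta,t))\,|f'(\zeta,t)|^2\,dm(\zeta) = \int_{\mathbb{C}} \Phi(z)\,\nu_{f(\cdot,t)}(z)\,dm(z),
\end{equation*}
valid because $|f'|^2$ is the real Jacobian of $f\colon\D\to\C$ and the number of preimages of $z$ in $\D$ is exactly $\nu_{f(\cdot,t)}(z)$, which is finite almost everywhere since $f\in\mathcal{O}(\overline{\D})$. Applying $d/dt$ to both sides and invoking (\ref{ddtpsif}) yields (\ref{windingnumber}).

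I do not anticipate a serious obstacle here: the corollary is a direct specialization of the lemma, with the counting function entering only through the area formula. The sole technical point worth noting is that although $\nu_{f(\cdot,t)}$ is only defined off the measure-zero set $f(\partial\D,t)$, this causes no difficulty for the Lebesgue integral against the smooth $\Phi$, and the differentiability in $t$ is inherited from the smoothness of the lifted integral $\int_\D \Phi\circ f\,|f'|^2\,dm$.
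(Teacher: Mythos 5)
Your proof is correct and follows precisely the paper's own argument: pull the left-hand side back to $\D$ via the area formula $\int_{\C}\Phi\,\nu_{f}\,dm = \int_{\D}\Phi(f)|f'|^2\,dm$, observe that $\Psi = \Phi\circ f$ satisfies (\ref{chainrule1}) by the chain rule, and then invoke Lemma~\ref{lemnuf}. No meaningful difference in approach.
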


\begin{proof}
Pulling the left member back to the unit disk by means of $f$ gives
$$
\frac{d}{dt}\int_{\mathbb{C}}\Phi(z)\nu_{f(\cdot,t)}(z)\,dm(z)
=\frac{d}{dt}\int_{\mathbb{D}} \Phi(f(\zeta,t))|f'(\zeta,t)|^2
\,dm(\zeta).
$$
Since the composed function $\Psi(\zeta,t)=\Phi(f(\zeta,t))$ satisfies
(\ref{chainrule1}) the corollary follows immediately from Lemma~\ref{lemnuf}.
\end{proof}

Note that Corollary~\ref{cornuf0} is strictly weaker than Lemma~\ref{lemnuf}.
If for example $\nu_{f(\cdot,t)}=2$ on some part of $\C$ then Lemma~\ref{lemnuf}
allows the test function $\Psi$ there to take different values on the two sheets of $f(\D)$
lying above this part, which is not possible for the $\Phi$ in Corollary~\ref{cornuf0}.

When $f(\zeta,t)$ solves the Polubarinova-Galin equation (\ref{pg1}) we get
$$
\frac{d}{dt}\int_{\mathbb{C}}\Phi(z)\nu_{f(\cdot,t)}(z)dm(z)
=q(t)\int_{0}^{2\pi}\Phi (f(e^{\I\theta},t))d\theta.
$$
In particular, applying this to arbitrary $\Phi\geq 0$:

\begin{corollary}\label{nufincreasing}
For any solution $t\mapsto f(\cdot,t)\in \mathcal{O}_{\rm
norm}(\mathbb{D})$ of the Polubarinova-Galin equation (\ref{pg1}) with $q(t)\geq 0$,
$\nu_{f(\cdot,t)}$ is an increasing function of $t$.
\end{corollary}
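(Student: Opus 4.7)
The plan is to derive the result directly from Corollary~\ref{cornuf0}. First I would substitute the Polubarinova-Galin equation (\ref{pg1}) into the right-hand side of (\ref{windingnumber}). Since ${\rm Re}[\dot f(\zeta,t)\overline{\zeta f'(\zeta,t)}] = q(t)$ on $\partial\D$, this immediately yields, for every smooth $\Phi$ on $\C$,
\begin{equation*}
\frac{d}{dt}\int_{\C} \Phi(z)\,\nu_{f(\cdot,t)}(z)\,dm(z) = q(t)\int_0^{2\pi}\Phi(f(e^{\I\theta},t))\,d\theta.
\end{equation*}

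Next I would restrict to $\Phi\geq 0$ and invoke the assumption $q(t)\geq 0$, which makes the right-hand side nonnegative. Integrating in time from $s$ to $t$ with $s\leq t$ gives
\begin{equation*}
\int_{\C}\Phi(z)\bigl(\nu_{f(\cdot,t)}(z)-\nu_{f(\cdot,s)}(z)\bigr)\,dm(z) = \int_s^t q(\tau)\int_0^{2\pi}\Phi(f(e^{\I\theta},\tau))\,d\theta\,d\tau \geq 0.
\end{equation*}

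Finally, since this inequality holds for every nonnegative smooth test function $\Phi$, a standard density/duality argument (applied to bump functions approximating the characteristic function of any small ball) gives $\nu_{f(\cdot,s)}(z)\leq \nu_{f(\cdot,t)}(z)$ for almost every $z\in\C$ (which is the natural sense in which $\nu_{f(\cdot,\cdot)}$ is defined, per Definition~\ref{def:countingnumber}). I do not anticipate any serious obstacle: the only mild subtlety is that $\nu_{f(\cdot,t)}$ is only defined a.e., but it is integer valued and uniformly bounded by the degree on compact sets, and the smoothness of $t\mapsto f(\cdot,t)$ justifies the differentiation under the integral sign that underlies Corollary~\ref{cornuf0}.
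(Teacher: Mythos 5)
Your proposal is correct and follows essentially the same route the paper takes: the paper, too, substitutes the Polubarinova-Galin equation into (\ref{windingnumber}) from Corollary~\ref{cornuf0}, then applies the resulting identity to arbitrary $\Phi\geq 0$ and reads off monotonicity of $\nu_{f(\cdot,t)}$. Your closing remark about testing against bump functions just makes explicit the a.e.\ interpretation that the paper leaves implicit.
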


Specializing (\ref{pg1}), on the other hand, to subharmonic and harmonic test functions (which we then denote $h$)
we obtain, in view of the mean-value properties satisfied by such functions:

\begin{corollary}\label{cornuf}
Let $t\mapsto f(\cdot,t)\in \mathcal{O}_{\rm
norm}(\overline{\mathbb{D}})$ solve the Polubarinova-Galin equation
(\ref{pg1}) with $q(t)\geq 0$. Then
$$
\frac{d}{dt}\int_{\mathbb{C}}h \nu_{f(\cdot,t)}dm\geq 2\pi q(t) h(0)
$$
for any $h$ which is subharmonic in a neighborhood of ${\rm supp\,}\nu_f$. If $h$ is harmonic, equality holds.
\end{corollary}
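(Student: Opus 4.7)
The plan is to derive the corollary directly from Corollary~\ref{cornuf0} by specializing the test function to a (sub)harmonic one and invoking the sub-mean-value property.

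First I would apply Corollary~\ref{cornuf0} with $\Phi$ chosen to agree with $h$ on a neighborhood of ${\rm supp\,}\nu_{f(\cdot,t)}=\overline{f(\D,t)}$ (and suitably cut off far away, so that $\Phi$ is a smooth function in $\C$; since $\nu_{f(\cdot,t)}$ is compactly supported, the integral only sees $\Phi=h$). In fact, by standard mollification one can approximate $h$ uniformly on ${\rm supp\,}\nu_{f(\cdot,t)}$ by smooth functions $h_\varepsilon$ which remain subharmonic on a neighborhood of this compact set, so up to passing to the limit $\varepsilon\to 0$ the integrals in (\ref{windingnumber}) make sense and are continuous in this approximation.

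Next I would use that $f(\cdot,t)$ solves (\ref{pg1}), which replaces the factor ${\rm Re}[\dot{f}\,\overline{\zeta f'}]$ on the right-hand side of (\ref{windingnumber}) by the constant $q(t)$, yielding
\begin{equation*}
\frac{d}{dt}\int_{\C}h\,\nu_{f(\cdot,t)}\,dm
=q(t)\int_0^{2\pi} h(f(e^{\I\theta},t))\,d\theta.
\end{equation*}
Now, since $h$ is subharmonic in a neighborhood of $\overline{f(\D,t)}$ and $f(\cdot,t)$ is analytic on $\overline{\D}$, the composition $h\circ f(\cdot,t)$ is subharmonic on $\overline{\D}$. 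By the sub-mean-value inequality applied at the origin, using $f(0,t)=0$,
\begin{equation*}
\frac{1}{2\pi}\int_0^{2\pi} h(f(e^{\I\theta},t))\,d\theta\;\geq\; h(f(0,t))\;=\;h(0),
\end{equation*}
and since $q(t)\geq 0$ we may multiply through to obtain the claimed inequality. When $h$ is harmonic, $h\circ f(\cdot,t)$ is harmonic and the circular mean equals $h(0)$ exactly, giving equality.

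The only delicate point is the approximation step: ensuring that one may replace a general (upper semicontinuous) subharmonic $h$ by smooth subharmonic test functions while passing to the limit in (\ref{windingnumber}). This is handled by the standard convolution regularization $h_\varepsilon=h*\chi_\varepsilon$ (with a radial, smooth, nonnegative mollifier), which is smooth, subharmonic on a slightly smaller neighborhood, decreases pointwise to $h$ as $\varepsilon\downarrow 0$, and converges uniformly on compacta where $h$ is continuous; for general subharmonic $h$ one invokes monotone/dominated convergence together with the fact that $h\circ f(\cdot,t)$ is bounded above on $\overline{\D}$, so both sides of (\ref{windingnumber}) pass to the limit. After this, the remaining computation is as above, and the corollary follows.
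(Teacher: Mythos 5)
Your argument is correct and follows essentially the same route as the paper: combine Corollary~\ref{cornuf0} with (\ref{pg1}) to obtain $\frac{d}{dt}\int_\C h\,\nu_{f(\cdot,t)}\,dm = q(t)\int_0^{2\pi} h(f(e^{\I\theta},t))\,d\theta$, and then apply the (sub-)mean-value property of the (sub)harmonic function $h\circ f(\cdot,t)$ at the origin. Your attention to the cut-off and the mollification needed to pass from smooth test functions $\Phi$ to general subharmonic $h$ simply makes explicit a step the paper treats as implicit.
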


As a particular case we get the relevant version of moment conservation.
Keeping the rightmost member of (\ref{defmoments}) as definition of the harmonic moments
in the non-univalent case, so that 
\begin{equation*}
M_{k}(t)= \frac{1}{2\pi\I}\int_{\mathbb{D}}f(\zeta,t)^k |f'(\zeta,t)|^2 dm(\zeta)
=\frac{1}{\pi}\int_{\mathbb{C}}z^k \nu_{f(\cdot,t)}(z)dm(z),
\end{equation*}
we have
$$
\frac{d}{dt}M_k(t)=0, \quad k=1,2,3,\dots
$$
under the assumptions of Corollary~\ref{cornuf}.

By integrating the inequality in Corollary~\ref{cornuf} with respect to $t$ we next obtain

\begin{corollary}\label{cor:nufqd}
Whenever $s\leq t$ and  $h$ is subharmonic in a neighborhood of
${\rm supp\,}\nu_f(\cdot, t)$ we have, when $f$ solves the
Polubarinova-Galin equation (\ref{pg1}),
\begin{equation}\label{nufqd}
\int_{\mathbb{C}}h \nu_{f(\cdot,t)}dm-\int_{\mathbb{C}}h
\nu_{f(\cdot,s)}dm\geq 2\pi (Q(t)-Q(s)) h(0),
\end{equation}
where $Q$ is the accumulated source (see (\ref{Q})).
\end{corollary}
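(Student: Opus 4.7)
The plan is to derive the integral inequality directly from the differential version in Corollary~\ref{cornuf} by integrating in time from $s$ to $t$. The only subtle point is that Corollary~\ref{cornuf} requires the test function $h$ to be subharmonic near $\mathrm{supp}\,\nu_{f(\cdot,\tau)}$ \emph{at each} intermediate time $\tau$, while the hypothesis of Corollary~\ref{cor:nufqd} only assumes subharmonicity near $\mathrm{supp}\,\nu_{f(\cdot,t)}$. So the first task is to reconcile these.

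First, I would invoke Corollary~\ref{nufincreasing}: since $q(\tau)\geq 0$, the counting function $\nu_{f(\cdot,\tau)}$ is nondecreasing in $\tau$, and consequently
$$
\mathrm{supp}\,\nu_{f(\cdot,\tau)} \subset \mathrm{supp}\,\nu_{f(\cdot,t)} \quad \text{for all } s\leq \tau\leq t.
$$
Therefore any $h$ which is subharmonic in some open neighborhood $U$ of $\mathrm{supp}\,\nu_{f(\cdot,t)}$ is automatically subharmonic in a neighborhood of $\mathrm{supp}\,\nu_{f(\cdot,\tau)}$ for every $\tau\in[s,t]$, so the hypothesis of Corollary~\ref{cornuf} is satisfied throughout the interval $[s,t]$.

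Second, I apply Corollary~\ref{cornuf} at each such $\tau$ to obtain the pointwise differential inequality
$$
\frac{d}{d\tau}\int_{\mathbb{C}} h\,\nu_{f(\cdot,\tau)}\,dm \geq 2\pi q(\tau)\, h(0).
$$
Integrating from $s$ to $t$, and using the defining relation $Q(t)-Q(s)=\int_s^t q(\tau)\,d\tau$ from (\ref{Q}), yields
$$
\int_{\mathbb{C}} h\,\nu_{f(\cdot,t)}\,dm \;-\; \int_{\mathbb{C}} h\,\nu_{f(\cdot,s)}\,dm \;\geq\; 2\pi\bigl(Q(t)-Q(s)\bigr)\,h(0),
$$
which is exactly the claimed inequality.

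The only matter requiring a little care is the legitimacy of the integration step, since $\nu_{f(\cdot,\tau)}$ can jump at times when the map's structure changes. But in the setting where $f(\cdot,\tau)$ is a smooth (strong) solution of the Polubarinova-Galin equation on $[s,t]$, the left-hand side $\tau \mapsto \int h\,\nu_{f(\cdot,\tau)}\,dm$ is absolutely continuous (it equals $\int_{\mathbb D} h(f(\zeta,\tau))|f'(\zeta,\tau)|^2 dm(\zeta)$ when $h$ is smooth, and the general subharmonic case follows by approximating $h$ from above by smooth subharmonic functions and passing to the limit using monotone convergence), so the fundamental theorem of calculus applies. I expect no essential obstacle; the main subtlety is simply the monotonicity observation that lets a single subharmonicity hypothesis at the final time $t$ suffice for all intermediate times.
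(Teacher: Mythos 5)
Your proof is correct and follows essentially the same route as the paper's, which simply integrates the differential inequality of Corollary~\ref{cornuf} from $s$ to $t$. The paper omits the remark that the monotonicity of $\nu_{f(\cdot,\tau)}$ in $\tau$ (Corollary~\ref{nufincreasing}) guarantees $\mathrm{supp}\,\nu_{f(\cdot,\tau)}\subset\mathrm{supp}\,\nu_{f(\cdot,t)}$ for $\tau\le t$, so that subharmonicity near the final support suffices at all intermediate times; your spelling this out (along with the absolute continuity needed to apply the fundamental theorem of calculus for a strong solution) is a useful clarification but not a different argument.
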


This corollary connects to a well-established notion of (variational
inequality) weak solution for the Hele-Shaw problem with a source of strength $q(t)\geq 0$
at the origin. We formulate the definition first for domains (or open sets) in $\C$. It will later be extended
to contexts of Riemann surfaces.

\begin{definition}\label{defweaksolution}
With $I\subset\R$ an interval (of any sort), a family of bounded open sets
$\{\Omega(t)\subset\C: t\in I\}$ is a {\bf weak solution} 
if for any $s,t\in I$ with $s\leq t$, $\Omega(s)\subset\Omega(t)$ and
\begin{equation}\label{weaksolution}
\int_{\Omega(t)}h dm-\int_{\Omega(s)}h dm\geq 2\pi (Q(t)-Q(s)) h(0)
\end{equation}
holds for every $h$ which is subharmonic and integrable in $\Omega(t)$.
If the interval $I$ is of the form $[0,T)$ (or $[0,T]$) then it is enough that (\ref{weaksolution})
holds for $s=0$ to have the full strength of (\ref{weaksolution}).
\end{definition}

Thus a solution of the Polubarinova-Galin equation will be a weak solution
as long as it is univalent, i.e., $0\leq \nu_{f(\cdot,t)}\leq 1$. 
When $\nu_{f(\cdot,t)}$ takes values $\geq 2$ it does not fit into the definition of a weak solution,
but the exceeding parts of $\nu_{f(\cdot,t)}$ can still be swept out to
produce a weak solution. This process, of partial balayage, will shortly be discussed in some detail.

Given any initial bounded open set $\Omega(0)$, a weak solution in the sense of Definition~\ref{defweaksolution}
always exists on the interval
$I=[0,\infty)$, and it is unique up to nullsets. If $\Omega(0)$ is
connected and $0\in\Omega(0)$, then also $\Omega(t)$ is connected
for all $t>0$. However, the domains $\Omega(t)$ need not be simply
connected all the time, hence may be out of reach for the
Polubarinova-Galin equation, although they do become simply
connected for large enough $Q(t)$. See \cite{Gustafsson-Vasiliev-2006}, 
\cite{Gustafsson-Teodorescu-Vasiliev-2014} and references therein.


\subsection{Weak solutions on terms of balayage}

The weak solution can be seen as an instance
of a sweeping process called partial balayage,
which under present circumstances results in quadrature domains for
subharmonic functions. We formulate here this sweeping process in $\C$,
but it can easily be adapted to Riemannian manifolds
of any dimension.  Some general references are \cite{Gustafsson-Sakai-1994},
\cite{Gustafsson-2004},  \cite{Sjodin-2007}, \cite{Gardiner-Sjodin-2009}, 
and (in a sligthly different context) \cite{Roos-2014}.

The fixed data is a measure $\lambda$ which (for the purpose of the present article)
has a bounded density with respect to Lebesgue measure, i.e., satisfies $\lambda\leq Cm$
for some constant $C$, and on a sufficiently large set, e.g., outside a compact set, 
moreover is bounded from below:   
\begin{equation}\label{boundslambda}
\lambda\geq cm
\end{equation}
for some $c>0$. 

\begin{definition}\label{def:partialbalayage}
With $\lambda$ as above, let $\mu$ be a Radon measure with compact support in $\C$.
Then {\bf partial balayage} of $\mu$ to $\lambda$ is defined as
$$
\text{Bal\,}(\mu, \lambda)=\mu + \Delta u,
$$
where $u$ is the smallest non-negative locally integrable function satisfying
\begin{equation}\label{muDelta}
\mu+\Delta u \leq \lambda,
\end{equation}
with $\Delta u$ denoting the distributional Laplacian of $u$.
\end{definition}

The assumption (\ref{boundslambda})
guarantees that there exist functions $u\geq 0$ with compact support satisfying (\ref{muDelta}), and then it follows from
general potential theory that a smallest such $u$ exists, and also that it can be taken to be
lower semicontinuous. See the above references. In particular, the result $\text{Bal\,}(\mu, \lambda)$ of partial balayage will be a measure
with compact support.

Assume for simplicity that also $\mu$ is absolutely continuous with respect to Lebesgue
measure, say $d\mu=\rho \,dm$, and that $\lambda=m$. Then $u$ is to be the smallest of all functions
which satisfy
\begin{equation}\label{twoineq}
\begin{cases}
u\geq 0,\\
\Delta u\leq 1-\rho.
\end{cases}
\end{equation}
This statement constitutes an obstacle problem on ordinary form and it has a unique solution.
This solution can also be characterized by the requirement that the two inequalities in
(\ref{twoineq}) shall hold in the complementary sense
\begin{equation}\label{compl}
u(1-\rho -\Delta u)=0.
\end{equation}

In general, $\text{Bal\,}(\mu, \lambda)$ is squeezed between the two natural bounds,
$$
\min \{\mu,\lambda\}\leq \text{Bal\,}(\mu, \lambda)\leq \lambda,
$$
and the more detailed structure is that
\begin{equation}\label{BalmuOmegageneral}
\text{Bal\,}(\mu, \lambda)=\lambda\chi_\Omega +\mu\chi_{\C\setminus\Omega}.
\end{equation}
Here $\Omega$ denotes the largest open set in which equality holds in (\ref{muDelta}),
in other words, $\Omega=\C\setminus{\rm supp\,}(\lambda-{\rm Bal\,}(\mu,\lambda))$. It is called
the {\bf saturated set}, and it contains the noncoincidence set for the obstacle problem:
$$
\{z\in\C: u(z)>0\}\subset\Omega.
$$
The inclusion may be strict, but under mild conditions the difference set is just a Lebesgue null-set.

In view of (\ref{BalmuOmegageneral}), the saturated set $\Omega$ contains all information of the result of partial balayage.
Another characterization of this set, directly in terms of $\mu$ and $\lambda$, is as follows:
\begin{equation}\label{mulambda}
{\mu}< {\lambda} \text{ on\,\,} {\C\setminus\Omega},
\end{equation}
\begin{equation}\label{subharmqi}
\int_{{\Omega}} h \,d{\mu}\leq \int_{{\Omega}} h \,d{\lambda} \text{ for all } h\in SL^1({\Omega},{\lambda}).
\end{equation}
Here (\ref{mulambda}) shall be interpreted as saying that 
$\C\setminus\Omega\subset{\rm supp\,}((\lambda-\mu)_+)$, in other words that whenever $\mu\geq \lambda$
in some open set $U$ it follows that $U\subset\Omega$. 

Recall from Section~\ref{sec:notations} that $SL^1({\Omega},{\lambda})$
denotes the set of subharmonic functions in $\Omega$ which are integrable with respect to $\lambda$.
This class of test functions can, in (\ref{subharmqi}), be replaced by just all logarithmic kernels
$h(z)=\log|z-a|$ for $a\in\C$ together with all $h(z)=-\log|z-b|$ for $b\in\C\setminus\Omega$, see \cite{Sakai-1982}, \cite{Sakai-1983}.
With these test functions, (\ref{subharmqi}) reduces to the statement that $u\geq 0$ in $\C$, $u=0$ on $\C\setminus\Omega$,
where $u$ now denotes the logarithmic potential of $\mu\chi_\Omega-\lambda\chi_\Omega$ (so that $\Delta u = \lambda \chi_\Omega- \mu\chi_\Omega$).
The proof of the equivalence between (\ref{BalmuOmegageneral}) and (\ref{mulambda}), (\ref{subharmqi}) then becomes straight-forward,
on noting in particular that the above $u$ will be identical with the function $u$ appearing in Definition~\ref{def:partialbalayage}.

We shall mostly consider $\text{Bal\,}(\mu, \lambda)$ in cases when there exists
an open set $D\subset\C$ such that $\mu\geq \lambda$ on $D$, $\mu=0$ outside
$D$. In such cases,
\begin{equation}\label{BalmuOmega}
\text{Bal\,}(\mu, \lambda)=\lambda\chi_\Omega.
\end{equation}
When $\lambda=m$,  
(\ref{subharmqi}) then expresses that $\Omega$ is a {\bf quadrature domain for subharmonic functions} for $\mu$.
This means that $\mu=0$ outside $\Omega$ and that
\begin{equation}\label{hmuhm}
 \int_\Omega h d\mu\leq \int_\Omega h dm
\end{equation}
holds for  all $SL^1({\Omega},m)$,
see  \cite{Sakai-1982} for detailed information.

In terms of partial balayage the weak solution $\Omega(t)$ at
time $t$ is obtained by
$$
\text{Bal\,}(2\pi Q(t)\delta_0+\chi_{\Omega(0)}m,m)=\chi_{\Omega(t)} m \quad (t>0).
$$
Generally speaking, partial balayage destroys information: in for example (\ref{BalmuOmega}), $\Omega$ is
uniquely determined by $\mu$, but a huge amount of different measures $\mu$ give the same $\Omega$.
Therefore the balayage point of view, or the formulation with quadrature domains for subharmonic
functions, embodies the fact that not only does Hele-Shaw
flow preserve harmonic moments, so that the mass distributions $2\pi Q(t)\delta_0+\chi_{\Omega(0)}m$
and $\chi_{\Omega(t)} m$ above are gravi-equivalent, but also that there is a time direction
saying that the first mass distribution  contains more information than the second.
This reflects the fact that Laplacian growth is well-posed in one time direction (increasing $t$ when $q>0$)
but ill-posed in the other, and also reminds of the role of entropy in statistical mechanics, which singles
out one time direction.

\begin{example}
To illustrate the use of partial balayage, we note that
the measure $\nu_{f(\cdot,t)} m$ in (\ref{nufqd}) may be
swept to a measure of the form $\chi_{\Omega(t)}m$:
$\text{Bal\,}(\nu_{f(\cdot,t)}m,m)=\chi_{\Omega(t)}m$.
This is the same as saying that $\int h\nu_{f(\cdot,t)} dm\leq \int_{\Omega(t)}h dm$
for $h\in SL^1(\Omega(t),m)$, as in (\ref{hmuhm}) above.
Taking $s=0$ as initial time and assuming for  simplicity that $f(\cdot,0)$ is univalent, so that
$\nu_{f(\cdot,0)}=\chi_{\Omega(0)}$ with $\Omega(0)=f(\D,0)$, the
inequality (\ref{nufqd})  gives
$$
\int_{\Omega(t)}h dm-\int_{\Omega(0)}h dm\geq 2\pi Q(t) h(0)
$$
for functions $h$ subharmonic in $\Omega(t)$. In other words,
$\{\Omega(t):t\geq 0\}$ is the ordinary weak solution, possibly multiply
connected, with initial domain $\Omega(0)$.

The evolution of $\nu_{f(\cdot,t)}$ can therefore be viewed as a
refinement of the ordinary weak solution,  a refinement in the sense that it
contains more information. One can always pass from
$\nu_{f(\cdot,t)}$ to $\chi_\Omega(t)$ by balayage, but there is
in general no way to recover $\nu_{f(\cdot,t)}$ from $\chi_\Omega(t)$.
An even more refined version of the evolution is obtained
by lifting everything to a Riemann surface over $\C$, which we shall now discuss.
\end{example}


\section{Lifting strong and weak solutions to a Riemann surface}\label{sec:lifting}

\subsection{Hele-Shaw flow on manifolds}

Hele-Shaw flow makes sense on Riemannian manifolds (of any
dimension). The only difference compared to the Euclidean case then
is that the measure $dm=dx\wedge dy$ in, for example, (\ref{lg}) and
(\ref{weaksolution}) shall be replaced by the intrinsic volume form
of the manifold. This also indicates how (\ref{weaksolution})
changes under variable transformations ($dm=dx\wedge dy$ shall be
treated as a $2$-form). We shall need to make these things precise
in the case that the Riemannian manifold is a branched covering
Riemann surface over $\C$, with the metric inherited from the
Euclidean metric on $\C$ via the covering map.

Let $\mathcal{M}$ be a Riemann surface and $p:\mathcal{M}\to \C$ a nonconstant analytic
function, thought of as a, possibly branched, covering map. If
$\tilde{z}=\tilde{x}+\I\tilde{y}$ is a local holomorphic coordinate
on $\mathcal{M}$ and $z=x+\I y$ the usual coordinate on $\C$ then the
Riemannian metric on $\mathcal{M}$ is taken to be the Euclidean metric $|dz|^2=dx^2+dy^2$,
which is lifted to $\mathcal{M}$ by
$p$, i.e.,
\begin{equation}\label{metric}
d\tilde{s}^2=|dp|^2= |p'(\tilde z)|^2(|d\tilde{x}|^2+|d\tilde{y}|^2).
\end{equation}
The intrinsic area form on $\mathcal{M}$ is similarly the pull-back of $dm=dx\wedge dy$ to $\mathcal{M}$, namely
\begin{equation}\label{dtildem}
d\tilde{m} = \frac{1}{2\I}d\bar{p}\wedge dp=|p'(\tilde{z})|^2d\tilde{x}\wedge d\tilde{y}.
\end{equation}
In terms of the Hermitian bilinear form  $d\bar p \otimes dp$ one can
write $d\tilde{s}^2=\re d\bar p \otimes dp $, $d\tilde{m}=\im d\bar p \otimes dp $.

Assume now that $0\in p(\mathcal{M})$ and let $\tilde{0}\in \mathcal{M}$ be a point such
that $p(\tilde{0})=0$. Then we may consider Hele-Shaw evolution on
$M$ with injection (or suction) at $\tilde{0}$. In case of a simply
connected evolution $\tilde{\Omega}(t)$, let
$$
\tilde{f}(\cdot,t):\D\to \tilde{\Omega}(t)\subset \mathcal{M}
$$
be conformal maps with $\tilde{f}(0,t)=\tilde{0}$ and $f'(0,t)>0$,
where $f=p\circ\tilde{f}$ is the projection of $\tilde{f}$ to $\C$,
$$
f(\zeta,t)=p(\tilde{f}(\zeta,t)).
$$
The latter relationship gives
\begin{equation}\label{invariance}
\frac{\dot{f}(\zeta,t)}{f'(\zeta,t)}=\frac{\dot{\tilde
f}(\zeta,t)}{\tilde f'(\zeta,t)},
\end{equation}
which expresses invariance of the Poisson integral (\ref{poisson})
under changes of coordinates. In particular it follows that the
evolution of $\tilde{f}$ is described by
\begin{equation}\label{lkR}
\dot{\tilde f}(\zeta,t)=\zeta \tilde f' (\zeta,t)P_g(\zeta,t),
\end{equation}
where $P_g(\zeta,t)=P(\zeta,t)$ is the Poisson integral (\ref{poisson}) defined,
not in terms of $\tilde{f}'$ but in terms of $g=f'$. The relationship
between $f'$ and $\tilde{f}'$ is
\begin{equation}\label{fpf}
f'(\zeta,t)=p'(\tilde{f}(\zeta,t))\tilde{f}'(\zeta,t).
\end{equation}
If $p$ is thought of as just a local identity map (away from branch
points) then $f'$ and $\tilde{f}'$ are the same.

It should be noted that $\tilde{f}(\cdot, t)$ by definition always
is univalent in $\D$, in particular $\tilde{f}'\ne 0$ in $\D$. If
$f'=0$ at some point in $\D$, then it is the factor $p'(\tilde
{f}(\zeta,t))$ in (\ref{fpf}) that vanishes there. When formulated
as a Polubarinova-Galin equation the evolution of $\tilde{f}$ is
given by
\begin{equation}\label{pgR}
\re\,[\dot{\tilde f}(\zeta,t)\overline{\zeta \tilde
f'(\zeta,t)}]=\frac{q(t)}{|p'(\tilde f(\zeta,t))|^2} \quad
(\zeta\in\partial\D).
\end{equation}
This equation is an immediate consequence of (\ref{lkR}), (\ref{fpf}) and (\ref{poisson}).
It is actually the general form of the Polubarinova-Galin equation on a manifold with 
Riemannian metric given as in (\ref{metric}), even when the integral of $p'$ is not interpreted as a 
covering map.

In general, $\dot{\tilde{f}}$ and
$\zeta\tilde{f}'$ should be interpreted as vectors in the tangent
space of $\mathcal{M}$ at $\tilde{z}=\tilde{f}(\zeta,t)$, while $P_g(\zeta,t)$
is a (complex) scalar. This makes (\ref{lkR}) (and (\ref{invariance}))
meaningful. Similarly, (\ref{pgR}) expresses that
$$
<\dot{\tilde{f}},\zeta\tilde{f}'>_\mathcal{M} =q \quad {\rm on\,\,}
\partial\D,
$$
where $<\cdot,\cdot>_\mathcal{M}$ denotes the (real) inner product on the
tangent space of $\mathcal{M}$. Alternatively, expressed in terms of the form
$d\tilde{m}= \frac{1}{2\I}d\bar p\wedge dp$, (\ref{pgR}) says that
$$
d\tilde{m}(\dot{\tilde{f}},\I\zeta\tilde{f}') =q \quad {\rm on\,\,}
\partial\D,
$$
which can be interpreted as a Poisson bracket relation. This has in some mathematical physics literature, see for example
\cite{Abanov-Mineev-Zabrodin-2009}, \cite{Wiegmann-Zabrodin-2000}, \cite{Gustafsson-Teodorescu-Vasiliev-2014},
been formalized under the name {\it string equation}.

When $f(\cdot,t)$ solves the L\"owner-Kufarev equation it is a
subordination chain by Theorem~\ref{lem:subordination} and hence it
can be lifted to a Riemann surface $\mathcal{M}$ by
Lemma~\ref{lem:riemannsurface}. Most of the previous formulas have
simple formulations on $\mathcal{M}$, for example (\ref{lg}) generalizes to
\begin{equation}\label{lgR}
\frac{d}{dt}\int_{\tilde\Omega(t)} h \,d\tilde{m}=2\pi
q(t) h(\tilde 0),
\end{equation}
for $h$ harmonic in a neighborhood of $\tilde{\Omega}(t)$, and
where $\tilde\Omega(t)=\tilde f (\D,t)$, $f=p\circ\tilde f:\D\to\mathcal{M}\to \C$.
For subharmonic $h$ we have inequality $\geq$. Thus on integrating
(\ref{lgR}) with respect to $t$ we arrive at the natural notion of
weak solution on the Riemann surface $\mathcal{M}$.

\begin{definition}\label{defweaksolutionR}
A family of open sets $\{\tilde{\Omega}(t)\subset \mathcal{M}: t\in I\}$ with
compact closure in $\mathcal{M}$ is a {\bf weak solution} on $\mathcal{M}$ if,
for any $s,t\in I$ with $s<t$, $\tilde\Omega(s)\subset \tilde\Omega(t)$ and
\begin{equation}\label{weaksolutionR}
\int_{\tilde\Omega(t)} h d\tilde{m}-\int_{\tilde\Omega(s)} h d\tilde{m}\geq 2\pi
(Q(t)-Q(s)) h(\tilde 0)
\end{equation}
holds for every $h\in SL^1(\tilde{\Omega}(t), \tilde{m})$.
\end{definition}

In terms of partial balayage on $\mathcal{M}$ (which makes good sense) the property of being a weak solution, i.e.,
(\ref{weaksolutionR}) together with $\tilde\Omega(s)\subset \tilde\Omega(t)$, translates into
\begin{equation}\label{defweaksolutionMbalayage}
\text{Bal\,}(2\pi (Q(t)-Q(s))\delta_{\tilde 0}+\chi_{\tilde{\Omega}(s)}\tilde{m},\tilde{m})
=\chi_{\tilde{\Omega}(t)} \tilde{m} \quad (s<t).
\end{equation}


\subsection{Examples}

\begin{example}\label{ex:usageweaksol}
The purpose of this example is to prepare for how the definition of a weak solution is going to be used in the proof
of Theorem~\ref{thm:universal}. For later clarity we spell out everything quite much in detail.

Choose $s=0$ in Definition~\ref{defweaksolutionR}
and assume that the domain $\tilde{\Omega}(0)=\tilde{f}({\D},0)=\tilde{f}({\D})$ at time $s=0$ 
is obtained by uniformization of some
fixed  $f\in\mathcal{O}_{\rm norm}(\overline{\D})$, as in Section~\ref{sec:nonlocuniv}.
Thus $f=p\circ \tilde{f}$, where $p$ is the covering map $p:\tilde{\Omega}(0)\to \C$
which, in terms of the trivial decomposition
$$
\D\stackrel{\mathrm{id}}{\longrightarrow} \D
\stackrel{f}{\longrightarrow} \C,
$$
is obtained by interpreting the second $\D$ as an abstract Riemann surface, identified as $\tilde{\Omega}(0)$. 
The names of the mappings are then shifted to
$$
\D\stackrel{\tilde{f}}{\longrightarrow} \tilde{\Omega}(0)
\stackrel{p}{\longrightarrow} \C.
$$

By assumption, $f$ is analytic in some larger disk, say in $\D(0,\rho)$, $\rho>1$. Thus the two diagrams extend to
\begin{equation}\label{stack3}
\D(0,\rho)\stackrel{\mathrm{id}}{\longrightarrow} \D(0,\rho)
\stackrel{f}{\longrightarrow} \C,
\end{equation}
\begin{equation}\label{stack4}
\D(0,\rho)\stackrel{\tilde{f}}{\longrightarrow} \mathcal{M}\stackrel{p}{\longrightarrow} \C,
\end{equation}
respectively, which defines the Riemann surface $\mathcal{M}$ as being the conformal image of $\D(0,\rho)$ under $\tilde{f}$.
In particular,  $\tilde{\Omega}(0)\subset \mathcal{M}$, and
for small enough $t>0$ the weak solution with initial domain $\tilde{\Omega}(0)$ will stay
compactly in $\mathcal{M}$. The defining property of the
solution domain $\tilde{\Omega}(t)\supset\tilde{\Omega}(0)$ at time $t>0$ is,
when formulated in terms of the abstract Riemann surface notations of (\ref{stack4}),
\begin{equation}\label{DtD}
\int_{\tilde\Omega(t)} \tilde{h} d\tilde{m}-\int_{\tilde\Omega(0)} \tilde{h} d\tilde{m}\geq 2\pi
Q(t) \tilde{h}(\tilde 0).
\end{equation}
This is to hold for all integrable (with respect to $\tilde{m}$) subharmonic functions $\tilde{h}$ in $\tilde{\Omega}(t)$. 
When the same property is formulated by identifying $\mathcal{M}$ with $\D(0,\rho)$ as in  (\ref{stack3}) it becomes
\begin{equation}\label{weaksolutionexample}
\int_{D(t)} h |g|^2 d{m}
-\int_{\D} h |g|^2 d{m}\geq 2\pi Q(t) h(0),
\end{equation}
to hold for all subharmonic $h$ in $D(t)$, integrable with respect to the measure $|g|^2 dm$.
Here $D(t)=\tilde{f}^{-1}(\tilde{\Omega}(t))\subset \D(0,\rho)$, $g=f'$, and $h=\tilde{h}\circ\tilde{f}$,
which is subharmonic if and only if $\tilde{h}$ is. Note that (by definition, (\ref{ex:usageweaksol})) $d\tilde{m}=p^* (dm)$ in the picture (\ref{stack4}),
which becomes $|g|^2 dm$ in the picture (\ref{stack3}).

The domains $\tilde{\Omega}(t)$ and $D(t)$ are not necessarily simply connected when $t>0$, as they are defined only in terms of a weak solution.
Eventually, however, we want to assert that they are simply connected if $t>0$ is small enough (Conjecture~\ref{lem:simplyconnected}).

\end{example}

The only thing which can make a weak solution break down is that it runs out of the manifold, $\mathcal{M}$.
Then the natural thing to do is to try to extend $\mathcal{M}$ to a larger manifold.
Weak solutions are unique (up to null-sets), but they of course
depend on the choice of $\mathcal{M}$ and $p$. If we take $\mathcal{M}$ to be, for example, a disk $\D(0,a)\subset\C$
($a>0$) then, assuming $Q(t)\to\infty$ as $t\to\infty$, any Hele-Shaw evolution will
eventually run out of $\mathcal{M}$. The following example shows that there are always many
different ways of enlarging $\mathcal{M}$, which then give rise to different Hele-Shaw
evolutions.

\begin{example}\label{ex:R}
Choose a point $a>0$ on the positive real axis, to be used as a
stopping point and also as a branch point. Let $\mathcal{M}=\D(0,a)$ be the disk
reaching out to $a$, and consider it as a Riemann surface with trivial
projection map $p(z)=z$ onto $\C$. Then starting from empty space
a Hele-Shaw flow evolution on $\mathcal{M}$ with injection at the origin
gives a family of growing disks, say  $\Omega(t)=\D(0,at)$
on the time interval $0<t<1$, as a weak (and strong) solution for the source strength
$q(t)=a^2t$ (so that $Q(t)=\frac{1}{2\pi}m(\D(0,at))$). At time
$t=1$ it runs out of $\mathcal{M}$, but it can be continued without any
changes on the trivially extended Riemann surface $\mathcal{M}_1=\C$,
for $0<t<\infty$.

However, it can also be continued in many other ways. Let for
example $\C_1$, $\C_2$ be two copies of $\C$ and consider
$$
\mathcal{M}_2=(\C_1\setminus \{a\})\cup (\C_2\setminus \{a\})\cup\{a\}
$$
as a covering surface of $\C$ with a branch point at $z=a$. The
covering map $p:\mathcal{M}_2\to \C$ identifies any point on $\C_j$ ($j=1,2$)
with the corresponding point on $\C$. This is also true at $z=a$,
but a more accurate description there has to be given in terms of a
local coordinate. We may for example choose a local coordinate
$\tilde{z}$ on $\mathcal{M}_2$ so that $\tilde{z}=0$ corresponds to $z=a$ and, more
precisely, so that
$$
p_2(\tilde{z})=\tilde{z}^2 +a.
$$
Thus, with $z=p_2(\tilde{z})$, $\tilde{z}=\sqrt{z-a}$. This coordinate $\tilde{z}$ is actually a global
coordinate on $\mathcal{M}_2$ and it makes $\mathcal{M}_2$ appear as the classical
Riemann surface of the multivalued function $\sqrt{z-a}$.

In terms of the above coordinate, the area form of $\mathcal{M}_2$ is
$$
d\tilde{m}_{2}= \frac{1}{2\I} d\bar{p}_2\wedge dp_2=4
|\tilde{z}|^2d\tilde{x}d\tilde{y}.
$$
The source point is to be one of the two points $\pm\sqrt{-a}$ on $\mathcal{M}_2$
above $0\in\C$, let it be $\tilde{0}=\I\sqrt{a}$, $\sqrt{a}$
denoting the positive root.
Now the definition of a weak solution on $\mathcal{M}_2$ becomes, explicitly,
$$
4\int_{\tilde\Omega(t)}h(\tilde{z}) |\tilde{z}|^2
d\tilde{x}d\tilde{y}-4\int_{\tilde\Omega(s)}h(\tilde z)
|\tilde{z}|^2d\tilde{x}d\tilde{y}\geq 2\pi (t-s) h(\I\sqrt{a}),
$$
to hold for all integrable (with respect to $\tilde{m}_{2}$) subharmonic
functions $h$ in $\tilde\Omega(t)$. Expressed in the coordinate $\tilde{z}$ it
is thus a weighted Hele-Shaw flow, as discussed in for example
\cite{Hedenmalm-Shimorin-2002}. It exists  for all $0<t<\infty$, but it
is certainly different from the solution $\Omega (t)$ on $\mathcal{M}_1=\C$. For
$t>1$, and when viewed on $\mathcal{M}_2$, part of $\tilde\Omega(t)$
continues on the original (`lower') sheet, say $\C_1$, while part
goes to the `upper' sheet $\C_2$. Hedenmalm and Shimorin \cite{Hedenmalm-Shimorin-2002}
use the terminology `wrapped Hele-shaw flow' when the solution goes up on a Riemann covering surface,
at least in the case when there are no branch points.

\end{example}


\begin{example}\label{ex:sakai}

This example can be viewed as a continuation of the previous example, but
from a different point of view. It is based on an example of Sakai \cite{Sakai-1988},
and it appears also, from a different point of view, in \cite{Hedenmalm-1991}. Let
$$
f(\zeta,t)=b(t)\,\frac{\zeta(\zeta-2t^{-1}+t^{-3})}{\zeta-t},
$$
where $1<t<\infty$ and $b(t)\in\R$ are parameters. The derivative is
\begin{equation}\label{exg}
g(\zeta,t)=b(t)\,\frac{(\zeta-t^{-1})(\zeta-2t+t^{-1})}{(\zeta-t)^2}.
\end{equation}
We see that $g$ has two zeros, $\omega_1(t) =t^{-1}\in \D$ and
$\omega_2(t)= 2t-t^{-1}\in \C\setminus\overline{\D}$, and that
$g(\zeta,t)d\zeta$, as a differential, has  double poles at
$\zeta_1(t)=t =\frac{1}{2}(\omega_1(t)+\omega_2(t))$
and at infinity. The data of $g$ are special in two ways: first of
all $\omega_1$ and $\zeta_1$ are reflections of each other with
respect to the unit circle, and secondly $\omega_2$ is chosen so
that $gd\zeta$ has no residues (which is immediately clear since $f$
has no logarithmic poles).

Since $\omega_1(t)\in\D$,  $f(\cdot,t)$ is not locally univalent in $\D$, but it
generates the same moments as a disk: all moments (defined by the rightmost member in (\ref{defmoments}))
vanish, except the first one which is
$$
M_0(t)=b(t)^2\, \frac{2t^2-1}{t^4}.
$$
More generally, the corresponding quadrature identity is
\begin{equation}\label{contractivedivisor1}
\frac{1}{\pi}\int_\D h(\zeta)|f'(\zeta,t)|^2 dm(\zeta) = b(t)^2 \,\frac{2t^2-1}{t^4}h(0),
\end{equation}
holding for $h$ analytic and integrable in $\D$.
This formula also shows that, for the special choice
$b(t)=\frac{t^2}{\sqrt{2t^2-1}}$, $g(\zeta,t)$ is a contractive
(inner) zero divisor in the sense of Hedenmalm \cite{Hedenmalm-1991},
\cite{Hedenmalm-Korenblum-Zhu-2000}.

Despite (\ref{contractivedivisor1}), $f(\zeta,t)$ in general does not solve the
Polubarinova-Galin equation (\ref{pg1}).
Only for one particular choice of $b(t)$ it does.
This choice is determined by the requirement that $f(\omega_1(t),t)$ shall be time independent.
Since
$$
f(\omega_1(t),t)=f(t^{-1},t)=\frac{b(t)}{t^3}
$$
this condition gives
\begin{equation}\label{bat}
b(t)=at^3,
\end{equation}
where $a$ is a constant. A calculation shows that for
this particular choice of $b(t)$, the Polubarinova-Galin equation indeed holds
with $q(t)=a^2t(4t^2-1)$:
$$
{\rm Re}\left[\dot{f}(\zeta,t)\overline{\zeta
f'(\zeta,t)}\right]=a^2 t{(4t^2-1)} \quad {\rm
for}\,\,\zeta\in\partial \mathbb{D}.
$$
Note that $q(t)>0$.
Also the L\"owner-Kufarev equation holds, because $f(\omega_1(t),t)=a$
is fixed (cf. Theorem~\ref{lem:subordination}).

Now we shall see that, taking $a>0$, the above solution, namely
$$
f(\zeta,t)=\frac{a\zeta(t^3\zeta-2t^2+1)}{\zeta-t},
$$
is exactly the projection under $p_2$ of the evolution on $\mathcal{M}_2$ in Example~\ref{ex:R}.
In fact, since $f(\cdot,t)$ maps the zero $\omega_1(t)\in\D$ of $g(\zeta,t)$
onto the fixed point $a$, $f(\cdot,t)$ lifts to a map $\tilde{f}(\cdot,t)$ into the surface $\mathcal{M}_2$.
Inverting $p_2(\tilde{z})=\tilde{z}^2+a$ gives the explicit expression
$$
\tilde{f}(\zeta,t)=\sqrt{f(\zeta,t)-a}=\sqrt{\frac{at(t\zeta -1)^2}{\zeta-t}}.
$$
This function, for any fixed $t>1$, is univalent, $\tilde{f}(\cdot,t): \D\to \tilde{\Omega}(t)\subset \mathcal{M}_2$,
and as a function of $t$ it represents, in the coordinate $\tilde{z}$, the Hele-Shaw evolution on $\mathcal{M}_2$.
Indeed, it satisfies the Polubarinova-Galin equation on $\mathcal{M}_2$:
$$
{\rm Re}\left[\dot{\tilde{f}}(\zeta,t)\overline{\zeta
\tilde{f}'(\zeta,t)}\right]=\frac{q(t)}{4|\tilde{f}(\zeta,t)|^2}\quad
{\rm for}\,\,\zeta\in\partial \mathbb{D}.
$$
This is an instance of (\ref{pgR}), as $p_2'(\tilde{z})=2\tilde{z}$.

As a summary, we write up in coordinates, $z$ and $\tilde{z}$, and $0<t<\infty$,
the complete evolution in Example~\ref{ex:R}, namely the growing disk which at the
point $a$ climbs up to the Riemann surface $\mathcal{M}_2$:

$(i)$ In terms of $z$, solving the ordinary L\"owner-Kufarev equation (\ref{lk}), (\ref{poisson}),
\begin{equation}\label{ftwocases}
f(\zeta,t)=
\begin{cases}
at\zeta \quad & (0<t<1),\\
\frac{a\zeta(t^3\zeta-2t^2+1)}{\zeta-t} \quad &(1<t<\infty).
\end{cases}
\end{equation}
Notice that both of the expressions above are (real) analytic in $t$, even across the
junction value $t=0$. Thus the combined function $f(\zeta,t)$ is  piecewise real analytic with respect to $t$.

$(ii)$ In terms of $\tilde{z}$, for which we have (\ref{lkR}) and (\ref{pgR}) when $t\ne 1$, and for which
the entire solution (across $t=1$) is a weak solution on $\mathcal{M}_2$,
$$
\tilde{f}(\zeta,t)=
\begin{cases}
\sqrt{a(t\zeta-1)} \quad &(0<t<1),\\
\sqrt{\frac{at(t\zeta -t)^2}{\zeta-t}} \quad &(1<t<\infty).
\end{cases}
$$

The source strength is
\begin{equation}\label{sourcestrength}
q(t)=
\begin{cases}
a^2 t \quad \quad &(0<t<1),\\
a^2 t{(4t^2-1)} \quad &(1<t<\infty).
\end{cases}
\end{equation}
Here we can see a discontinuity of $q(t)$ at $t=1$. However this is harmless, and can be avoided by using another
time parametrization. For example one could define $f(\zeta,t)=at^3\zeta$, $\tilde{f}(\zeta,t)=\sqrt{a(t^3\zeta-1)}$
for $0<t<1$, which gives the same family of domains, just traversed with a different speed. This would give
$q(t)=3a^2t^5$ for $0<t<1$, making $q(t)$ continuous across $t=1$.
\end{example}


\subsection{The Riemann surface solution pulled back to the unit disk}

For a function $h(\zeta,t)$ which is holomorphic in $\zeta$, the
requirement (\ref{chainrule1}), with $\Psi=h$, reduces to the simpler statement
\begin{equation}\label{fvarphi0}
{\dot{h}(\zeta,t)}{ f'(\zeta,t)}
={\dot{f}(\zeta,t)}{ h'(\zeta,t)}.
\end{equation}
This can be viewed as the vanishing of a functional determinant and can alternatively be written as
\begin{equation}\label{fvarphi}
\frac{\dot{h}(\zeta,t)}{\zeta h'(\zeta,t)}
=\frac{\dot{f}(\zeta,t)}{\zeta f'(\zeta,t)},
\end{equation}
where (on dividing by $\zeta$) we also have used that $f(0,t)=0$ for
all $t$. When $f$ solves
the L\"owner-Kufarev equation (\ref{lk}) the right member is holomorphic
in $\D$ and equals $P(\zeta,t)$.
Then (\ref{fvarphi}) means that
$h$ solves the same L\"owner-Kufarev equation as $f$. This
can be interpreted as saying that `$h$ flows with $f$', and it also
follows that the $h(\zeta,t)$ are subordinated by the same functions
as $f(\zeta,t)$:
\begin{equation}\label{subordinationh}
h(\varphi(\zeta,s,t),t)=h(\zeta,s) \quad (s\leq t).
\end{equation}
Here $\varphi(\zeta,s,t)$ are the subordination functions in (\ref{subordination}).
Note that (\ref{subordinationh}), or (\ref{fvarphi0}), implies that $h(0,t)=h(0,s)$.

We can now assert

\begin{proposition}\label{thmqd}
Let $t\mapsto f(\cdot,t)\in \mathcal{O}_{\rm norm}(\mathbb{D})$ be a
smooth evolution on some time interval and assume that $f'\ne 0$ on $\partial\D$ on this time interval.
Then $f(\cdot,t)$ solves the Polubarinova-Galin equation (\ref{pg1}) if and only if
\begin{equation}\label{ddthzetat}
\frac{d}{dt}\int_{\mathbb{D}} h(\zeta,t)|f'(\zeta,t)|^2
\,dm(\zeta)=2\pi q(t) h(0,t).
\end{equation}
for every function $h(\cdot,t)\in
\mathcal{O}(\overline{\mathbb{D}})$ which satisfies (\ref{fvarphi0})
(equivalently, (\ref{chainrule1}) or (\ref{subordinationh})), and it solves the L\"owner-Kufarev
equation (\ref{lk}) if and only if moreover (\ref{invariant}) holds
(equivalently, $f(\cdot,t)$ is a subordination chain).
\end{proposition}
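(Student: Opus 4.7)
The first observation is that when $h(\zeta,t)$ is holomorphic in $\zeta$, the compatibility condition (\ref{fvarphi0}) is literally the specialization of (\ref{chainrule1}) obtained by dropping the $\partial h/\partial\bar\zeta$ term, so Lemma~\ref{lemnuf} applied with $\Psi=h$ yields
\[
\frac{d}{dt}\int_{\D} h(\zeta,t)\,|f'(\zeta,t)|^2\,dm(\zeta)=\int_{\partial\D} h(\zeta,t)\,{\re}\bigl[\dot f(\zeta,t)\,\overline{\zeta f'(\zeta,t)}\bigr]\,d\theta.
\]
Since $h(\cdot,t)$ is holomorphic on $\overline{\D}$, the mean-value property gives $\int_{\partial\D} h\,d\theta=2\pi h(0,t)$, so subtracting $q(t)$ times this identity shows that (\ref{ddthzetat}) is equivalent to
\[
\int_{\partial\D} h(\zeta,t)\bigl({\re}[\dot f\,\overline{\zeta f'}]-q(t)\bigr)\,d\theta = 0
\]
for every valid test function $h$. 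Writing $G(\zeta,t):={\re}[\dot f\,\overline{\zeta f'}]-q(t)$, the forward direction is immediate, since if (\ref{pg1}) holds then $G\equiv 0$ on $\partial\D$ and the identity above is trivial.

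For the converse, the task is to produce a family of valid $h$'s whose boundary traces on $\partial\D$ separate the Fourier modes of the real-analytic real-valued function $G(\cdot,t_1)$ at each fixed time $t_1$. The canonical supply is $h(\zeta,t)=F(f(\zeta,t))$ for $F$ holomorphic near $f(\overline{\D},t)$, which satisfies (\ref{fvarphi0}) automatically because $\dot h\,f'=F'(f)\,\dot f\,f'=\dot f\,h'$. More generally, the assumption $f'\ne 0$ on $\partial\D$ means $\dot f/f'$ is holomorphic in some annular neighborhood of $\partial\D$, so the linear transport equation $\dot h=(\dot f/f')\,h'$ can be solved by the method of characteristics from arbitrary analytic initial data there, producing further valid $h$'s. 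Once one knows that $h(\cdot,t_1)=\zeta^n$ is attainable as the time-$t_1$ value of a valid test function for each $n\ge 0$, the integral identity specializes to $\int_0^{2\pi}e^{in\theta}G(e^{i\theta},t_1)\,d\theta=0$ for $n\ge 0$, and the reality of $G$ (forcing $\hat G(-n)=\overline{\hat G(n)}$) then kills every Fourier mode, giving $G(\cdot,t_1)\equiv 0$, which is precisely (\ref{pg1}) at time $t_1$.

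The main obstacle is justifying this density of admissible test functions in the genuinely non-locally-univalent case: when $f'$ has an interior zero $\omega(t)\in\D$ with $\dot f(\omega(t),t)\ne 0$, the requirement that $\dot h=(\dot f/f')\,h'$ remain holomorphic forces $h'$ to vanish at $\omega(t)$ to the order of the zero of $f'$, so the admissible initial data $h(\cdot,t_1)$ form a finite-codimension subspace of $\mathcal{O}(\overline{\D})$. One must verify, by a Runge-type approximation argument on $\overline{\D}$ minus the interior critical set (or by lifting to the Riemann surface supplied by Lemma~\ref{lem:riemannsurface} once a subordination structure is in place, making the construction $\tilde F\circ \tilde f$ available), that boundary traces of this restricted class still span densely enough on $\partial\D$ for the Fourier argument to close. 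Finally, the second equivalence — that solutions of the L\"owner--Kufarev equation are characterized among PG solutions by the additional condition (\ref{invariant}) — is precisely the content of Theorem~\ref{lkpg} and can be quoted directly.
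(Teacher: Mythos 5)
Your approach matches the paper's: apply Lemma~\ref{lemnuf} with $\Psi=h$ (noting that (\ref{fvarphi0}) is (\ref{chainrule1}) for holomorphic $\Psi$), use the mean-value property to get $2\pi q(t)h(0,t)$ on the right, and then argue by density of boundary traces for the converse; the second equivalence is indeed just a citation of Theorem~\ref{lkpg}. The paper's own proof is a one-liner: ``follows immediately from Lemma~\ref{lemnuf} and Theorem~\ref{lem:subordination} since $\re h|_{\partial\D}$ and $\im h|_{\partial\D}$ range over a dense set of functions.''

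What you did differently, and usefully, is to probe whether that density claim actually survives in the genuinely non-locally-univalent regime. Your worry is legitimate: if $f'(\omega(t),t)=0$ for some $\omega(t)\in\D$ and $\dot f(\omega(t),t)\ne 0$, then holomorphy of $\dot h=(\dot f/f')h'$ forces $h'(\omega(t),t)=0$, so the admissible $h(\cdot,t_1)$ form a finite-codimension closed subspace of $\mathcal{O}(\overline\D)$, whose boundary traces are \emph{not} dense in $C(\partial\D)$. Concretely, a single simple interior critical point $\omega$ with $\dot f(\omega)\ne 0$ leaves a two-real-parameter family of real functions $G$ on $\partial\D$ orthogonal to all admissible $h$, namely (writing $G$ for the defect $\re[\dot f\,\overline{\zeta f'}]-q$) those of the form $G=\frac{1}{\pi}\re\bigl[\bar c\,\zeta/(1-\bar\omega\zeta)^2\bigr]$, $c\in\C$. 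Showing such a $G$ cannot arise from a smooth evolution — i.e.\ that the converse direction still closes — requires an argument that the paper does not supply and that your Runge/lifting sketch does not yet deliver. Note also that the most natural supply of extra test functions, $h=\tilde F\circ\tilde f$ with $\tilde F$ holomorphic on the covering surface of Lemma~\ref{lem:riemannsurface}, presupposes that $\{f(\cdot,t)\}$ is already a subordination chain, which is exactly part of what is at issue (that hypothesis belongs to the L\"owner--Kufarev half of the proposition, not to the bare PG equation). So you have correctly located a real soft spot that the paper passes over; your outline of how one might fill it (characterizing the admissible class precisely and then closing the Fourier argument on the complement of the constrained modes) is the right direction, but the proof is not complete as written, and neither, in this respect, is the paper's.
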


\begin{proof}
The proposition follows immediately from Lemma~\ref{lemnuf} and Theorem~\ref{lem:subordination}
since $\re h|_{\partial \D}$ and $\im h|_{\partial \D}$
range over a dense set of functions in (\ref{ddthzetat}).
\end{proof}

Also the Riemann surface weak formulation (\ref{weaksolutionR}) can, in case relevant domains  are simply connected,
 be pulled back to the unit disk in various ways. 
In Example~\ref{ex:usageweaksol} this was done by pulling the initial domain back to
$\D$, which works well for discussing solutions on a short time interval $0\leq t <\varepsilon$.
However, to discuss global solutions it is better to fix a final
time $t=T$ under consideration, and then pull back the domain
$\tilde{\Omega}(T)\subset \mathcal{M}$ at that time to $\D$, assuming that $\tilde{\Omega}(T)$ is simply connected.
Then all previous domains become subdomains of $\D$.

Thus fixing $T$  and identifying $\tilde{\Omega}(T)$ with $\D$  via
$\tilde{f}(\cdot,T)$, equation (\ref{weaksolutionR}) becomes, for $ s<t\leq T$
and on setting $g=f'$ as usual,
\begin{equation}\label{weakunitdisk}
\int_{D(t,T)} h (z)|g(z,T)|^2 \,dm(z)-\int_{D(s,T)} h (z)|g(z,T)|^2 \,dm(z)
\end{equation}
$$
\geq 2\pi (Q(t)-Q(s)) h(0),
$$
to hold for  $h\in SL^1 (D(t,T),m)$. Here the domains 
$D(s,T)=\tilde{f}^{-1} (\tilde{\Omega}(s),T)$, 
$D(t,T)=\tilde{f}^{-1} (\tilde{\Omega}(t),T)$,
satisfying  $D(s,T)\subset D(t,T)\subset \D$, 
need not be simply connected.
Choosing $t=T=0$ with $s<0$ gives
\begin{equation}\label{Ds}
\int_{\D} h (z)|g(z,0)|^2 \,dm(z) - \int_{D(s)} h(z)|g(z,0)|^2 \,dm(z)\geq-2\pi Q(s) h(0),
\end{equation}
where $D(s)=D(s,0)\subset \D$ and $Q(s)<0$. This is a counterpart of
(\ref{weaksolutionexample}) for negative times. It also connects to
the theory of finite contractive zero divisors: starting, as in Example~\ref{ex:R}, a Hele-Shaw
evolution on a Riemann surface $\mathcal{M}$ from empty space, we have $D(s)=\emptyset$  
at the initial time $s<0$, and then (\ref{Ds}) can be identified with the definition of an inner divisor
(namely $g(z,0)$ in the above equation),
as in \cite{Hedenmalm-1991}, \cite{Hedenmalm-Korenblum-Zhu-2000}.

The weak solution can be coupled to the L\"owner-Kufarev equation only if the domains $\tilde{\Omega}(t)$,
or $D(t,T)$, are simply connected. When this is the case we have $D(t,T)=\varphi(\D,t,T)$, where
$\varphi (\zeta,s,t)$ are the subordination functions associated to the conformal maps
$f(\cdot, t):\D\to \tilde{\Omega}(t)$.
In such a case, and returning to (\ref{weakunitdisk}), choosing  $T=t$ there and making
the variable transformation $z=\varphi(\zeta,s,t)$ in the
last integral, one gets
\begin{equation}\label{weakunitdisk2}
\int_{\D} h (z)|g(z,t)|^2 \,dm(z) -\int_{\D} h
(\varphi(\zeta,s,t))|g(\zeta,s)|^2 \,dm(\zeta)
\end{equation}
$$
\geq 2\pi (Q(t)-Q(s)) h(0),
$$
to hold for $h$ subharmonic and integrable in $\D$. For harmonic $h$ we have
equalities in the above inequalities because both of $\pm h$ are then subharmonic.
The relation (\ref{weakunitdisk2}) can also be obtained directly by
integrating (\ref{ddthzetat}) and using (\ref{subordinationh}).
Note that for time dependent test functions, $h(z,t)$, which satisfy (\ref{subordinationh}), 
the relation (\ref{weakunitdisk2}) takes the simpler form
$$
\int_{\D} h (z,t)|g(z,t)|^2 \,dm(z) -\int_{\D} h(z,s)|g(z,s)|^2 \,dm(z)
\geq 2\pi (Q(t)-Q(s)) h(0,t).
$$

We summarize:

\begin{proposition}\label{prop:weaksolutionLK}
A family $\{f(\cdot,t)\in\mathcal{O}_{\rm norm}(\overline{\mathbb{D}}):0\leq t\leq T\}$
represents a weak solution as in Definition \ref{defweaksolutionR}  (with $I=[0,T]$) 
if an only if it is a subordination chain as in Definition~\ref{def:subordination} and
(\ref{weakunitdisk2}) holds for $0\leq s<t\leq T$.
\end{proposition}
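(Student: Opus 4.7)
The plan is to observe that this is essentially a change of variable: pulling the Riemann surface inequality (\ref{weaksolutionR}) back to $\D$ via the conformal lifts $\tilde{f}(\cdot,t):\D\to\tilde{\Omega}(t)$ yields the $\D$-inequality (\ref{weakunitdisk2}), while the required subordination chain structure is exactly the content of the monotone inclusions $\tilde{\Omega}(s)\subset\tilde{\Omega}(t)$ via Lemma~\ref{lem:riemannsurface}.

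For the forward direction, suppose $\{\tilde{\Omega}(t)\}$ is a weak solution on $\mathcal{M}$. The inclusion $\tilde{\Omega}(s)\subset\tilde{\Omega}(t)$ together with univalence of $\tilde{f}(\cdot,t)$ lets us define $\varphi(\zeta,s,t):=\tilde{f}^{-1}(\tilde{f}(\zeta,s),t)\in\D$, a normalized univalent self-map of $\D$ satisfying $f(\zeta,s)=f(\varphi(\zeta,s,t),t)$, so $\{f(\cdot,t)\}$ is a subordination chain. To deduce (\ref{weakunitdisk2}), take any $\tilde{h}\in SL^1(\tilde{\Omega}(t),\tilde{m})$ and set $h(\zeta):=\tilde{h}(\tilde{f}(\zeta,t))$. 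Using (\ref{dtildem}), the identity $f'=p'(\tilde{f})\,\tilde{f}'$ from (\ref{fpf}), and the change of variable $z=\tilde{f}(\zeta,t)$, one obtains
$$
\int_{\tilde{\Omega}(t)}\tilde{h}\,d\tilde{m}=\int_{\D}h(\zeta)|g(\zeta,t)|^2\,dm(\zeta);
$$
the analogous computation for $\tilde{f}(\cdot,s)$, combined with $\tilde{h}\circ\tilde{f}(\cdot,s)=h\circ\varphi(\cdot,s,t)$, gives
$$
\int_{\tilde{\Omega}(s)}\tilde{h}\,d\tilde{m}=\int_{\D}h(\varphi(\zeta,s,t))|g(\zeta,s)|^2\,dm(\zeta).
$$
Since also $\tilde{h}(\tilde{0})=h(0)$, substituting into (\ref{weaksolutionR}) gives exactly (\ref{weakunitdisk2}).

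For the converse, given a subordination chain satisfying (\ref{weakunitdisk2}), Lemma~\ref{lem:riemannsurface} furnishes $\mathcal{M}$, a covering map $p$, and univalent liftings $\tilde{f}(\cdot,t):\D\to\mathcal{M}$ whose images $\tilde{\Omega}(t):=\tilde{f}(\D,t)$ are nested. Reversing the change of variables above: given $\tilde{h}\in SL^1(\tilde{\Omega}(t),\tilde{m})$, set $h=\tilde{h}\circ\tilde{f}(\cdot,t)$ and apply (\ref{weakunitdisk2}); the same two computations translate the inequality back into (\ref{weaksolutionR}), showing that $\{\tilde{\Omega}(t)\}$ is a weak solution on $\mathcal{M}$.

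The only nontrivial point, and the step that requires a little care, is the identification of test function classes: one must verify that the assignment $\tilde{h}\mapsto\tilde{h}\circ\tilde{f}(\cdot,t)$ sets up a bijection between $SL^1(\tilde{\Omega}(t),\tilde{m})$ and the subharmonic functions on $\D$ that are integrable with respect to $|g(\cdot,t)|^2\,dm$. Because $f\in\mathcal{O}_{\rm norm}(\overline{\D})$, the map $\tilde{f}(\cdot,t)$ is a biholomorphism between neighborhoods of $\overline{\D}$ and $\overline{\tilde{\Omega}(t)}$, so subharmonicity transports in both directions and integrability matches via the area identity used above. With this identification the two inequalities become literally the same statement, completing the proof.
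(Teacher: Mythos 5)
Your proof is correct and follows essentially the same route the paper takes: the paper derives (\ref{weakunitdisk}) and then (\ref{weakunitdisk2}) by the same pullback/change-of-variables computation in the text immediately preceding the proposition, and then states the proposition as a summary rather than giving a separate proof. Your version simply makes that implicit argument explicit, in both directions, and correctly identifies the content of the equivalence: $\tilde\Omega(s)\subset\tilde\Omega(t)$ on $\mathcal{M}$ corresponds to the subordination chain property via Lemma~\ref{lem:riemannsurface}, and (\ref{weaksolutionR}) corresponds to (\ref{weakunitdisk2}) via the identities $\int_{\tilde\Omega(t)}\tilde h\,d\tilde m=\int_\D h\,|g(\cdot,t)|^2\,dm$ and $\tilde h\circ\tilde f(\cdot,s)=h\circ\varphi(\cdot,s,t)$.

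One small point is worth flagging in the converse direction. Lemma~\ref{lem:riemannsurface} as stated produces $\mathcal{M}=\cup_{t\in I}\mathcal{M}_t$, and since $I=[0,T]$ is closed this gives $\mathcal{M}=\mathcal{M}_T=\tilde\Omega(T)$, so the top domain does not have compact closure in $\mathcal{M}$, contrary to what Definition~\ref{defweaksolutionR} demands. You implicitly repair this when you note that $\tilde f(\cdot,t)$ is a biholomorphism between neighborhoods of $\overline{\D}$ and $\overline{\tilde\Omega(t)}$, but that requires a slightly larger $\mathcal{M}$ than the lemma hands you. The fix is the one used in Example~\ref{ex:usageweaksol}: since $f(\cdot,T)\in\mathcal{O}_{\rm norm}(\overline{\D})$ is analytic in some $\D(0,\rho)$ with $\rho>1$, build $\mathcal{M}$ by uniformizing $\D(0,\rho)$ via $f(\cdot,T)$, so that $\tilde\Omega(T)\cong\D\Subset\D(0,\rho)\cong\mathcal{M}$. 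It would be cleaner to state this extension explicitly rather than leave it implicit.
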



\section{Compatibility between balayage and covering maps}\label{sec:compatibility}
The family of branched covering surfaces over $\C$ form a partially ordered set in
a natural way. Within in each of the surfaces one can perform partial balayage,
sweeping to the area form lifted from $\C$.
Thus  we have two kinds of projection maps, reducing refined objects to
cruder objects containing less information:

$(i)$ The first is the balayage operator
taking, for example, an initial domain $\Omega(0)$ to the domain at a later time
$\Omega(t)$ by sweeping out the accumulated source:
$$
\text{Bal\,}(2\pi Q(t)\delta_{\tilde{0}}+\chi_{{\Omega}(s)}\tilde{m},\tilde{m})
=\chi_{{\Omega}(t)} \tilde{m}.
$$
This map is really an orthogonal projection in a Hilbert space
(e.g., the Sobolev space $H_0^{1}(\mathcal{M})=W_0^{1,2}(\mathcal{M})$ if the Dirac measures are suitably smoothed out).
It is a `horizontal' projection, within each covering surface.

$(ii)$ The second is the branched covering map between two Riemann surfaces, by which a measure
on the higher surface can be pushed down to a measure on the lower surface. One may think of this
as a `vertical' projection.

The aim of the present section is to show that these two projections commute in an appropriate sense.
Let $p:\mathcal{M}\to \C$ be a branched covering map, i.e., $p$ is a nonconstant analytic function.
By $p_*$ we denoted the push-forward map, which can be applied to measures on $\mathcal{M}$, to (parametrized) chains for integration
(simply by composition), etc. Similarly, $p^*$ denotes the pull-back map, which can be applied to functions
and differential forms on $\C$. If for example $\tilde{\Omega}$ is a domain in $\mathcal{M}$, thought of as the
oriented $2$-chain parametrized by some $\tilde{f}:\D\to \mathcal{M}$ ($\tilde{\Omega}=\tilde{f}(\D)$),
then $p_*\tilde{\Omega}$ is the $2$-chain parametrized by  $f=p\circ \tilde{f}:\D\to \C$, which can be thought of
as $\Omega=f(\D)$ with appropriate multiplicities. In other words, $p_*$ takes the measure $\chi_{\tilde{\Omega}}\tilde{m}$,
on $\mathcal{M}$, where $d\tilde{m}=p^*dm=d(p\circ x)\wedge d(p\circ y )$, to $\nu_f m$ on $\C$, $\nu_f$ being the counting function,
Definition~\ref{def:countingnumber}.

Note that $p_*$ and $p^*$ are linear maps on suitable vector spaces and that  they, in some formal sense, are each others adjoints.
For example, for measures $\mu$ with compact support on $\mathcal{M}$ and continuous functions $\varphi$ on $\C$ we have
$$
\int_\C\varphi\,d(p_*\mu)= \int_{\mathcal{M}}(\varphi\circ p)\,d\mu =\int_{\mathcal{M}}(p^*\varphi)\,d\mu.
$$
The first identity here can be used as a definition of $p_*$ when it acts on measures, and $p^*(\varphi)$ is simply defined as $\varphi\circ p$. 

In order to be able to use systematic notations we now denote the complex plane by $\mathcal{M}$, and we call the covering surface $\tilde{\mathcal{M}}$.
This makes the proposition below look like a quite general result (which it in fact is, but we shall only prove it under the stated assumptions).

\begin{proposition}\label{prop:compatibility}
With $p:\tilde{\mathcal{M}}\to \mathcal{M}$ a nonconstant proper analytic map between two Riemann surfaces, where $\mathcal{M}=\C$,
let $\tilde{\mu}$ be a measure with compact support in $\tilde{\mathcal{M}}$,
$\lambda$ a measure on $\mathcal{M}$, absolutely continuous with respect to $m$ and satisfying (\ref{boundslambda}), and let
$\tilde{\lambda}$ be a measure on $\tilde{\mathcal{M}}$ satisfying $\tilde{\lambda}\geq p^*\lambda$. 
Then
$$
{\rm Bal\,}(p_*{\rm Bal\,}(\tilde{\mu}, \tilde{\lambda}), \lambda)={\rm Bal\,}(p_*\tilde{\mu},\lambda).
$$
\end{proposition}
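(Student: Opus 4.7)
The plan is to write $A := {\rm Bal\,}(p_*\tilde\nu,\lambda)$, where $\tilde\nu := {\rm Bal\,}(\tilde\mu,\tilde\lambda)$, in the form $A = p_*\tilde\mu + \Delta w$ for an explicit nonnegative $w$, and then to recognise this decomposition as the defining balayage identity for ${\rm Bal\,}(p_*\tilde\mu,\lambda)$ via the complementarity condition built into Definition~\ref{def:partialbalayage}. First I would write $\tilde\nu = \tilde\mu + \Delta \tilde u$, with $\tilde u \geq 0$ the minimising potential on $\tilde{\mathcal{M}}$, so that $\tilde u$ vanishes off the saturated set $\tilde\Omega := \{\tilde\nu = \tilde\lambda\}$. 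The crucial preliminary ingredient is the commutation
\begin{equation*}
p_*(\Delta \tilde u) \;=\; \Delta (p_*\tilde u), \qquad (p_*\tilde u)(z) := \sum_{w\in p^{-1}(z)} \tilde u(w),
\end{equation*}
which follows from the naturality of $2\I\partial\bar\partial$ under the holomorphic map $p$ (i.e.\ $p^*(2\I\partial\bar\partial\varphi) = 2\I\partial\bar\partial(p^*\varphi)$ for smooth $\varphi$ on $\mathcal{M}$), combined with the adjointness $\int (\varphi\circ p)\,d\sigma = \int \varphi\,d(p_*\sigma)$ and integration by parts.

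Setting $u := p_*\tilde u$ gives $p_*\tilde\nu = p_*\tilde\mu + \Delta u$. I would next write $A = p_*\tilde\nu + \Delta u_A$ with $u_A\geq 0$ its own balayage potential on $\mathcal{M}$, so that $A = p_*\tilde\mu + \Delta w$ with $w := u + u_A \geq 0$ and $A\leq\lambda$. By uniqueness of partial balayage it then suffices to verify the complementarity condition $w = 0$ off $\Omega_A := \{A=\lambda\}$. The factor $u_A$ vanishes off $\Omega_A$ automatically (it is itself a balayage potential), so the task reduces to showing that $u = p_*\tilde u$ vanishes off $\Omega_A$. Since $\tilde u$ vanishes off $\tilde\Omega$, we already have $u = 0$ off $p(\tilde\Omega)$, so what is needed is the inclusion $p(\tilde\Omega) \subset \Omega_A$.

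This inclusion is where the hypothesis $\tilde\lambda \geq p^*\lambda$ enters. On $\tilde\Omega$ we have $\tilde\nu = \tilde\lambda \geq p^*\lambda$; writing $\lambda = \rho\,dm$, the density at $z\in p(\tilde\Omega)$ of $p_*\bigl((\rho\circ p)\chi_{\tilde\Omega}\,d\tilde{m}\bigr)$ equals $\rho(z)\cdot\#\{w\in p^{-1}(z)\cap\tilde\Omega\}\geq\rho(z)$, so $p_*\tilde\nu \geq \lambda$ on the open set $p(\tilde\Omega)\subset\mathcal{M}$ (openness following from the open mapping theorem applied to the nonconstant holomorphic $p$). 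Applying the characterisation (\ref{mulambda}) of the saturated set to ${\rm Bal\,}(p_*\tilde\nu,\lambda)$ then forces every open set on which $p_*\tilde\nu \geq \lambda$ to lie inside $\Omega_A$, and in particular $p(\tilde\Omega) \subset \Omega_A$, which closes the argument.

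The main obstacle, I expect, is not in the chain of inclusions itself but in the careful distributional bookkeeping at branch points of $p$ and at the boundaries of $\tilde\Omega$ and $\Omega_A$: the pointwise density comparison used above must be upgraded to a measure-theoretic statement valid up to null sets, and one must check that $p_*\tilde u$ inherits from $\tilde u$ enough lower semicontinuity and support control so that it can genuinely serve as part of a balayage potential on $\mathcal{M}$ rather than merely as a distribution in the right equivalence class. Since branch points form a discrete set and the obstacle problem is stable under mollification, these issues should be resolvable by standard potential-theoretic techniques from the references cited in the paper.
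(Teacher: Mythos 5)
Your proof is correct, but it takes a genuinely different route from the paper's. The paper argues entirely on the level of measures and the test-function characterizations (\ref{mulambda})--(\ref{subharmqi}) of the saturated set: it shows that $\Omega'$ (the saturated set for ${\rm Bal\,}(p_*\tilde{\nu},\lambda)$) satisfies both conditions characterizing the saturated set for ${\rm Bal\,}(p_*\tilde{\mu},\lambda)$, using pull-back of subharmonic test functions $\varphi\mapsto\varphi\circ p$ applied to (\ref{maximal}), and then concludes $\nu'=\nu$ by the structure formula (\ref{BalmuOmegageneral}). You instead lift the balayage \emph{potential}: you introduce the fiber-sum push-forward $u=p_*\tilde{u}$, prove the commutation $p_*(\Delta\tilde{u})=\Delta(p_*\tilde{u})$ (which the paper neither uses nor needs), and verify the complementarity system for the obstacle problem on $\mathcal{M}$ directly, with $w=u+u_A$. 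The pivotal step is the same in both: the inclusion $p(\tilde{\Omega})\subset\Omega'$ (your $\Omega_A$), deduced from $\tilde{\lambda}\geq p^*\lambda$ exactly as in (\ref{inclusion}). What your route buys is conceptual transparency --- ``push forward the obstacle function'' is a very natural picture, and the identity $p_*\Delta=\Delta p_*$ for the fiber sum is a clean geometric fact in its own right. What the paper's route buys is that it sidesteps precisely the regularity and bookkeeping issues you flag in your last paragraph: it never needs $p_*\tilde{u}$ to be lower semicontinuous, never needs to be careful at branch points, and never invokes uniqueness of the solution to a complementarity system for a general $\lambda$ with bounded density (the paper only states (\ref{compl}) for $\lambda=m$); it only uses the two readily verified characterizations of the saturated set, plus the completely trivial fact that subharmonicity is preserved under $p^*$. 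Both arguments are sound; yours requires the ancillary lemmas you anticipate, all of which are standard but do need to be supplied.
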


\begin{proof}
Since $\mathcal{M}=\C$ and $p$ is proper, $\tilde{\mathcal{M}}$ will be large enough for ${\rm Bal\,}(\tilde{\mu}, \tilde{\lambda})$ to exist 
and have compact support in $\tilde{\mathcal{M}}$. Set then
$$
\tilde{\nu}={\rm Bal\,}(\tilde{\mu}, \tilde{\lambda}),
$$
$$
{\nu}'={\rm Bal\,}(p_*\tilde{\nu}, {\lambda}),
$$
$$
\mu=p_*\tilde{\mu},
$$
$$
{\nu}={\rm Bal\,}({\mu}, {\lambda})
$$
and we shall show that $\nu'=\nu$.

By the general structure of partial balayage (\ref{BalmuOmegageneral}) we have
\begin{equation}\label{balomegatilde}
\tilde{\nu}=\tilde{\lambda}\chi_{\tilde{\Omega}}+
\tilde{\mu}\chi_{\tilde{\mathcal{M}}\setminus\tilde{\Omega}},
\end{equation}
where $\tilde{\Omega}\subset \tilde{\mathcal{M}}$ is the maximal open set in which $\tilde{\nu}=\tilde{\lambda}$.
Recall (\ref{mulambda}), (\ref{subharmqi}) that this $\tilde{\Omega}$ can also be characterized by
\begin{equation}\label{maximal}
\begin{cases}
\tilde{\mu}< \tilde{\lambda} \text{ on } \tilde{\mathcal{M}}\setminus\tilde{\Omega},\\
\int_{\tilde{\Omega}} \psi \,d\tilde{\mu}\leq \int_{\tilde{\Omega}} \psi \,d\tilde{\lambda} \text{ for all } \psi\in SL^1(\tilde{\Omega},\tilde{\lambda}).
\end{cases}
\end{equation}
Here $SL^1(\tilde{\Omega},\tilde{\lambda})$ may be replaced by a smaller test class, as discussed after (\ref{mulambda}), (\ref{subharmqi}),
to avoid some possible integrability problems below.  

Similarly to the above we have
\begin{equation}\label{balG}
{\nu}'=\lambda\chi_{\Omega'}+
(p_*\tilde{\nu})\,\chi_{{\mathcal{M}}\setminus{\Omega'}}
\end{equation}
where $\Omega'\subset {\mathcal{M}}$ is characterized by
\begin{equation}\label{balG1}
\begin{cases}
p_*\tilde{\nu}< {\lambda} \text{ on }\mathcal{M}\setminus\Omega',\\
\int_{\Omega'} \varphi \,d(p_*\tilde{\nu})\leq \int_{\Omega'} \varphi \,d{\lambda} \quad
(\varphi\in SL^1(\Omega',{\lambda})),
\end{cases}
\end{equation}
and
\begin{equation}\label{balomega}
\nu={\lambda}\chi_{{\Omega}}+
\mu\,\chi_{{\mathcal{M}}\setminus{\Omega}},
\end{equation}
with ${\Omega}\subset \mathcal{M}$ characterized by
$$
\begin{cases}
{\mu}< {\lambda} \text{ on }\mathcal{M}\setminus{\Omega},\\
\int_{{\Omega}} \varphi \,d{\mu}\leq \int_{{\Omega}} \varphi \,d{\lambda}
\quad(\varphi\in SL^1({\Omega},{\lambda})).
\end{cases}
$$

Since $p_*$ is a linear operator (\ref{balomegatilde}) gives
\begin{equation}\label{balG2}
p_*\tilde{\nu}=p_*(\tilde{\lambda}\chi_{\tilde{\Omega}})+
p_*(\tilde{\mu}\chi_{\tilde{\mathcal{M}}\setminus\tilde{\Omega}}).
\end{equation}
By the assumption $\tilde{\lambda}\geq p^*(\lambda)$ we have
$p_* (\chi_{\tilde{\Omega}}\tilde{\lambda})\geq \lambda\chi_{p(\tilde{\Omega})}$.
Thus (\ref{balG2}) shows that $p_*\tilde{\nu}\geq \lambda$ in $p(\tilde{\Omega})$.
It follows that $\nu'\geq\lambda$ in $p(\tilde{\Omega})$, hence
\begin{equation}\label{inclusion}
p(\tilde{\Omega})\subset\Omega'.
\end{equation}

By definition of $p_*\tilde{\nu}$, the second part of (\ref{balG1})  spells out to
$$
\int_{p^{-1}({\Omega'})} (\varphi\circ p) \,d{\tilde{\nu}}\leq \int_{{\Omega'}} \varphi \,d{\lambda}
\quad(\varphi\in SL^1({\Omega'},{\lambda})),
$$
which in view of (\ref{balomegatilde}) gives that
$$
\int_{p^{-1}({\Omega'})\cap\tilde{\Omega}} (\varphi\circ p) \,d{\tilde{\lambda}}
+\int_{p^{-1}({\Omega'})\setminus\tilde{\Omega}} (\varphi\circ p) \,d{\tilde{\mu}}
\leq \int_{{\Omega'}} \varphi \,d{\lambda}
\quad (\varphi\in SL^1({\Omega'},{\lambda})).
$$

Next we take $\psi=p^*\varphi=\varphi\circ p$ in (\ref{maximal}).  This gives
$$
\int_{\tilde{\Omega}} (\varphi\circ p) \,d\tilde{\mu}\leq \int_{\tilde{\Omega}} (\varphi\circ p) \,d\tilde{\lambda}
\quad (\varphi\in SL^1({\Omega'},{\lambda})).
$$
Combining with the previous inequality, and using that $ p^{-1}(\Omega')\supset\tilde{\Omega}$ by (\ref{inclusion}),
gives, for $\varphi\in SL^1({\Omega'},{\lambda})$,
$$
\int_{{\Omega'}} \varphi \,d{\mu}=\int_{p^{-1}({\Omega'})} (\varphi\circ p) \,d\tilde{\mu}
=\int_{\tilde{\Omega}} (\varphi\circ p) \,d\tilde{\mu}+\int_{p^{-1}({\Omega'})\setminus\tilde{\Omega}} (\varphi\circ p) \,d\tilde{\mu}
$$
$$
\leq\int_{\tilde{\Omega}} (\varphi\circ p) \,d\tilde{\lambda}+\int_{p^{-1}({\Omega'})\setminus\tilde{\Omega}} (\varphi\circ p) \,d\tilde{\mu}
\leq \int_{{\Omega'}} \varphi \,d{\lambda} \quad (\varphi\in SL^1({\Omega'},{\lambda})).
$$

In summary,
\begin{equation}\label{Omegaprim}
\int_{\Omega'} \varphi \,d\mu\leq \int_{\Omega'} \varphi \,d{\lambda} \quad (\varphi\in SL^1(\Omega',{\lambda})).
\end{equation}
We also have, by (\ref{balomegatilde}), (\ref{balG1}) and, respectively, (\ref{inclusion}),
$$
p_*(\tilde{\mu}\chi_{\tilde{\mathcal{M}}\setminus\tilde{\Omega}})\leq p_ *\tilde{\nu}<\lambda\quad \text{on\,\,}\mathcal{M}\setminus\Omega',
$$
$$
p_*(\tilde{\mu}\chi_{\tilde{\Omega}})=0\quad \text{in\,\,}\mathcal{M}\setminus\Omega'.
$$
Therefore $\mu=p_*\tilde{\mu}<\lambda$ on $\mathcal{M}\setminus\Omega'$. 
In combination with (\ref{Omegaprim}) this gives
$$
{\nu}=\lambda\chi_{\Omega'}+
\mu\chi_{{\mathcal{M}}\setminus{\Omega'}}.
$$
Now (\ref{balG}), (\ref{balG2}), (\ref{inclusion}) finally give
$$
{\nu}'=\lambda\chi_{\Omega'}+(p_*\tilde{\nu})\,\chi_{{\mathcal{M}}\setminus{\Omega'}}
=\lambda\chi_{\Omega'}+(p_*(\tilde{\lambda}\chi_{\tilde\Omega})+p_*(\tilde{\mu}\chi_{\tilde{\mathcal{M}}\setminus\tilde{\Omega}}))
\chi_{{\mathcal{M}}\setminus{\Omega'}}
$$
$$
=\lambda\chi_{\Omega'}+(p_*\tilde{\mu})\,\chi_{{\mathcal{M}}\setminus{\Omega'}}
=\lambda\chi_{\Omega'}+{\mu}\,\chi_{{\mathcal{M}}\setminus{\Omega'}}=\nu,
$$
as desired.

\end{proof}


\section{Global simply connected weak solutions}\label{sec:universal}

As already mentioned, given $p:\mathcal{M}\to \C$ as in
Section~\ref{sec:lifting} and any initial domain
$\tilde\Omega(0)\subset \mathcal{M}$ with $\tilde 0\in\Omega(0)$, a unique
global weak solution $\{\tilde{\Omega}(t):0\leq t<\infty\}$, in the sense of Definition \ref{defweaksolutionR}, 
exists if just $\mathcal{M}$ is large enough. And if $\mathcal{M}$ is not large enough from outset it may always be extended,
in many ways (cf. Example~\ref{ex:R}), to allow for such a global weak solution.
However, even if the initial domain $\tilde\Omega(0)$ is simply connected the weak
solution will in general not remain simply connected all the time.

Now, our main statement, Theorem \ref{thm:universal}, asserts that if $\tilde\Omega(0)$ is simply connected
and has analytic boundary, then it is indeed always possible to choose $\mathcal{M}\supset\tilde\Omega(0)$ so that
the solution $\tilde\Omega(t)$ in $\mathcal{M}$ remains simply connected all the time.
Without referring to any Riemann surface the assertion may be formulated simply as saying that there exists
a global weak solution of the L\"owner-Kufarev equation, for any given $f(\cdot,0)\in\mathcal{O}_{\rm norm}(\overline{\mathbb{D}})$.
The solution cannot not always be smooth in $t$,  because if zeros of $g=f'$ reach the unit circle then it is in most cases
necessary to change the structure of $g$ in order to make the solution go on. The Riemann surfaces involved are needed mainly to make 
the appropriate notion  of a weak solution precise (Definition \ref{defweaksolutionR}). 
  
The difficulty in constructing $\mathcal{M}$ lies in the fact
that it cannot be constructed right away, but has to be created
along with the solution. It has to be updated every time a zero of
$g$ for the corresponding L\"owner-Kufarev equation reaches the
unit circle. 
Unfortunately, as we have not been able to settle Conjecture~\ref{lem:simplyconnected0} 
stated in the introduction, we have to include the validity of this conjecture among the assumptions in the theorem below.
The precise formulation is as follows.

\begin{theorem}\label{thm:universal}
Let $f(\cdot,0)\in\mathcal{O}_{\rm norm}(\overline{\mathbb{D}})$ be
given, together with $q(t)\geq 0$ ($0\leq t<\infty$) such that $Q(t)\to\infty $ as
$t\to\infty$. Then, under the assumption that Conjecture~\ref{lem:simplyconnected0} 
(or Conjecture~\ref{lem:simplyconnected} below) is true, there exists a Riemann surface $\mathcal{M}$, a nonconstant
holomorphic function $p:\mathcal{M}\to \C$ and a point $\tilde{0}\in \mathcal{M}$ with
$p(\tilde{0})=0$ such that the following assertions hold.

\begin{itemize}

\item [(i)] $f(\cdot,0)$ factorizes over $\mathcal{M}$, i.e., there exists a univalent
function $\tilde{f}(\cdot,0):\D\to \mathcal{M}$ with
$\tilde{f}({0},0)=\tilde{0}$ such that
$f(\cdot,0)=p(\tilde{f}(\cdot,0))$.

\item[(ii)] On setting $\tilde{\Omega}(0)=\tilde{f}(\D,0)$, the weak
Hele-Shaw evolution $\{\tilde{\Omega}(t)\}$ on $\mathcal{M}$ with initial
domain $\tilde{\Omega}(0)$ exists for all $0\leq t<\infty$ and
$\tilde{\Omega}(t)$ is simply connected for each $t$. 

\item[(iii)] Let $\nu_{f(\cdot,t)}$ denote the counting function of
$f(\cdot,t)=p(\tilde{f}(\cdot,t))$
and let $\Omega(t)$ denote the domain obtained by partial balayage of
$\nu_{f(\cdot,t)}m$ onto Lebesgue measure $m$:
$$
{\rm Bal\,}(\nu_{f(\cdot,t)}m, m)=\chi_{\Omega(t)} m.
$$
Then the family $\{\Omega(t)\}$ is a weak solution in the ordinary sense
on $\C$, with the domains $\Omega(t)$ possibly multiply connected.

\end{itemize}

\end{theorem}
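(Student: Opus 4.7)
The plan is to construct $\mathcal{M}$ and the lifting $\tilde{f}(\cdot,t)$ simultaneously by an inductive procedure, building $\mathcal{M}$ one branch point at a time, interleaving smooth L\"owner-Kufarev evolutions with discrete "surgery" operations at moments when a zero of $g=f'$ reaches $\partial\D$. First I would construct an initial surface $\mathcal{M}_{0}$ as in Lemma~\ref{lem:riemannsurface}: since $f(\cdot,0)\in\mathcal{O}_{\rm norm}(\overline{\D})$ is analytic on some $\D(0,\rho_{0})$ with $\rho_{0}>1$, take $\mathcal{M}_{0}$ to be the abstract Riemann surface obtained from $\D(0,\rho_{0})$ with covering map realized by $f(\cdot,0)$, and set $\tilde{f}(\zeta,0)=\zeta$ in this coordinate. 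This immediately yields (i).

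Next I would run the L\"owner-Kufarev equation (\ref{lk}) starting from this initial datum. By Theorem~\ref{lem:subordination} the family is a subordination chain, so by Lemma~\ref{lem:riemannsurface} it lifts to univalent $\tilde{f}(\cdot,t):\D\to\mathcal{M}_{0}$ with monotonically growing images $\tilde{\Omega}(t)$, valid on a maximal time interval $[0,t_{1})$ during which no zero of $g(\cdot,t)$ crosses $\partial\D$. On this interval the weak formulation (\ref{weakunitdisk2}) is automatic from Proposition~\ref{prop:weaksolutionLK}.

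The crucial step is handling a crisis time $t_{1}$ at which a zero of $g$ arrives at $\partial\D$ from outside. At this instant the strong equation cannot be continued while keeping $\mathcal{M}_{0}$; instead one passes to the weak formulation of Definition~\ref{defweaksolutionR} pulled back to the unit disk as in Example~\ref{ex:usageweaksol}. For small $\tau>0$ one seeks $D(\tau)\supset\D$ satisfying
\begin{equation*}
\int_{D(\tau)} h\,|g(\zeta,t_{1})|^{2}\,dm(\zeta)\geq \int_{\D} h\,|g(\zeta,t_{1})|^{2}\,dm(\zeta)+2\pi\tau\,h(0)
\end{equation*}
for all $h$ subharmonic and integrable against $|g(\cdot,t_{1})|^{2}m$; existence and uniqueness come from partial balayage with weight $|g(\cdot,t_{1})|^{2}$. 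Exactly here I invoke Conjecture~\ref{lem:simplyconnected0} with $g=g(\cdot,t_{1})$ (which does have zeros on $\partial\D$, so the classical stability results are inapplicable) to conclude that $D(\tau)$ is star-shaped, hence simply connected, for $\tau$ small. Then $D(\tau)$ can be uniformized by a conformal map from $\D$, and composing with the analytic extension of $f(\cdot,t_{1})$ gives a new lifting onto a slightly enlarged surface $\mathcal{M}_{1}\supset\mathcal{M}_{0}$ carrying a new branch point above the critical value $f(\omega(t_{1}),t_{1})$.

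I then restart the L\"owner-Kufarev evolution on $\mathcal{M}_{1}$ and iterate the construction, producing an increasing sequence of surfaces $\mathcal{M}_{0}\subset\mathcal{M}_{1}\subset\cdots$ and crisis times $0<t_{1}<t_{2}<\cdots$; the union $\mathcal{M}=\bigcup_{k}\mathcal{M}_{k}$ is the desired Riemann surface. Simple connectivity of each $\tilde{\Omega}(t)$ in (ii) is preserved both in the smooth phases (conformal images of $\D$) and across each crisis (by Conjecture~\ref{lem:simplyconnected0} applied to the instant weak extension). For (iii) I would apply Proposition~\ref{prop:compatibility} with $\tilde{\mathcal{M}}=\mathcal{M}$, $\lambda=m$, $\tilde{\lambda}=\tilde{m}=p^{*}m$ and $\tilde{\mu}=2\pi Q(t)\delta_{\tilde 0}+\chi_{\tilde{\Omega}(0)}\tilde{m}$; pushing $\chi_{\tilde{\Omega}(t)}\tilde{m}$ forward by $p$ gives $\nu_{f(\cdot,t)}m$, and the commutativity of balayage with the covering map identifies its sweeping with the classical planar weak solution $\chi_{\Omega(t)}m$. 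The main obstacle, aside from the standing conjecture itself, will be to rule out finite accumulation $t_{k}\nearrow t_{\infty}<\infty$ of crisis times, which requires a definite monotone increment (of area or mapping degree) between consecutive crises; in the rational case this follows from the finite combinatorics of poles and zeros treated in Section~\ref{sec:rational}, while the general case requires a separate compactness argument.
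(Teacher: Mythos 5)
The overall strategy (simultaneously building $\mathcal{M}$ and the evolution, invoking the conjecture at branch-point creation, and deducing (iii) from Proposition~\ref{prop:compatibility}) is close in spirit to the paper's. However, your construction is organized as an induction over a discrete sequence of ``crisis times'' $t_1<t_2<\cdots$ at which a zero of $g$ reaches $\partial\D$, and this leaves a genuine gap which you yourself flag in the last sentence: you have no argument that the $t_k$ do not accumulate at a finite $t_\infty$. The remark that ``in the rational case this follows from the finite combinatorics of poles and zeros'' is not a proof (poles and zeros multiply at each crossing, so the combinatorics are not a priori bounded), and for general initial data you offer no compactness argument at all.

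The paper avoids this issue entirely by \emph{not} distinguishing smooth times from crisis times. It runs a standard maximal-interval continuation: (a) from any $[0,T]$ to $[0,T+\varepsilon]$ by choosing a fresh annulus $1<|\zeta|<\rho$ free of zeros of $f'(\cdot,T)$ (zeros on $|\zeta|=1$ are explicitly permitted) and appealing to Conjecture~\ref{lem:simplyconnected}; and (b) from any $[0,T)$ to $[0,T]$ by taking $\tilde\Omega(T)=\bigcup_{t<T}\mathcal{M}_t$. The entire mechanism that makes step (b) close is the uniform estimate of Lemmas~\ref{lem:radius} and \ref{lem:radius1}: $f(\cdot,t)$ extends meromorphically to a fixed disk $\D(0,1/r)$ with $r<1$ independent of $t$ (because the weak-solution inequality combined with Schwarz's lemma preserves condition (ii) of Lemma~\ref{lem:radius}). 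Your proposal never invokes these lemmas, so you have no uniform control on the analyticity/meromorphy radius of $f(\cdot,t)$, which is also needed to rule out a second failure mode you do not mention: the smooth L\"owner--Kufarev evolution could a priori degenerate before any zero of $g$ touches $\partial\D$ (e.g.\ if the radius of analyticity of $f(\cdot,t)$ shrinks to $1$ by some other mechanism). Adding Lemma~\ref{lem:radius1} and recasting the induction as a closed-interval continuation argument would close these gaps and essentially reproduce the paper's proof.

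Parts (i) and (iii) of your proposal are correct and match the paper.
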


For the proof of Theorem~\ref{thm:universal} we shall need a few auxiliary results,
stated below.

\begin{lemma}\label{lem:radius}
Let $f\in \mathcal{O}_{\rm norm}(\overline{\mathbb{D}})$ and let
$0<r<1$. Then the following are equivalent.

\begin{itemize}

\item[(i)] $f$ extends to be meromorphic in ${\D(0,\frac{1}{r})}$ with
poles only at the reflected  (in $\partial\D$) zeros of $g$, more
precisely so that $fg^*\in \mathcal{O}(\D(0,\frac{1}{r})\setminus
\overline{\D})$.

\item[(ii)] For every number  $\rho$ with $r<\rho<1$ there exists a
constant $C_\rho$ such that
\begin{equation}\label{crho}
|\int_\D h|g|^2 dm| \leq C_\rho \sup_{\D(0,\rho)} |h| \quad
(h\in\mathcal{O}(\overline{\mathbb{D}})).
\end{equation}

\item[(iii)] For every number  $\rho$ with $r<\rho<1$ there exists a
(signed) measure $\sigma$ with ${\rm supp\,}\sigma\subset
\overline{\D(0,\rho)}$ such that
\begin{equation}\label{sigma}
\int_\D h|g|^2 dm = \int hd\sigma \quad
(h\in\mathcal{O}(\overline{\mathbb{D}})).
\end{equation}

\end{itemize}

\end{lemma}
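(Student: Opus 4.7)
The plan is to establish the cycle $(i)\Rightarrow(ii)\Rightarrow(iii)\Rightarrow(i)$, with everything hinging on the Green--Stokes identity
\begin{equation*}
\int_{\mathbb{D}} h(\zeta)\,|g(\zeta)|^{2}\,dm(\zeta)
\;=\;
\frac{1}{2\I}\int_{\partial\mathbb{D}} h(\zeta)\,g(\zeta)\,f^{*}(\zeta)\,d\zeta
\qquad (h\in\mathcal{O}(\overline{\mathbb{D}})),
\end{equation*}
obtained from $d(h\,\bar f\,df)=2\I\,h|g|^{2}\,dm$ and the observation that $\bar f=f^{*}$ on $\partial\mathbb{D}$. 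Because $f,g\in\mathcal{O}(\overline{\mathbb{D}})$, the product $gf^{*}$ is automatically analytic in some annulus $1/R<|\zeta|<R$ with $R>1$, so contour deformations within this annulus are free. For $(i)\Rightarrow(ii)$, note that $(i)$ is equivalent, via the reflection $(fg^{*})^{*}=gf^{*}$, to $gf^{*}\in\mathcal{O}(\{r<|\zeta|<1\})$, and hence, in combination with the \emph{a priori} piece, to $gf^{*}\in\mathcal{O}(\{r<|\zeta|<R\})$. For $\rho\in(r,1)$ one pushes the contour in the Green identity from $|\zeta|=1$ down to $|\zeta|=\rho$ and estimates the resulting line integral by $\pi\rho\sup_{|\zeta|=\rho}|gf^{*}|\cdot\sup_{\mathbb{D}(0,\rho)}|h|$, which is $(ii)$ with $C_{\rho}=\pi\rho\sup_{|\zeta|=\rho}|gf^{*}|$.

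The implication $(ii)\Rightarrow(iii)$ is standard functional analysis: the linear functional $\Lambda(h)=\int_{\mathbb{D}}h|g|^{2}\,dm$ on $\mathcal{O}(\overline{\mathbb{D}})$, regarded as a subspace of $C(\overline{\mathbb{D}(0,\rho)})$ via restriction, is bounded of norm $\leq C_{\rho}$ by hypothesis. Hahn--Banach extends it to a bounded functional on the whole of $C(\overline{\mathbb{D}(0,\rho)})$, and the Riesz representation theorem then produces a signed Borel measure $\sigma$ supported in $\overline{\mathbb{D}(0,\rho)}$ representing $\Lambda$, which is exactly $(iii)$.

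The main and most interesting step is $(iii)\Rightarrow(i)$. Laurent-expand $gf^{*}(\zeta)=\sum_{n\in\Z} c_{n}\zeta^{n}$ in its \emph{a priori} annulus of analyticity. Applying the Green identity with $h(\zeta)=\zeta^{k}$, $k\geq 0$, together with $\int_{|\zeta|=1}\zeta^{m}\,d\zeta=2\pi\I\,\delta_{m,-1}$, gives $\int_{\mathbb{D}}\zeta^{k}|g|^{2}\,dm=\pi c_{-k-1}$; on the other hand $(iii)$ applied to the same monomials yields $c_{-k-1}=\pi^{-1}\int\zeta^{k}\,d\sigma$, and hence $|c_{-k-1}|\leq \|\sigma\|\rho^{k}/\pi$ for \emph{every} $\rho>r$, whence $\limsup_{k}|c_{-k-1}|^{1/k}\leq r$. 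Therefore the principal part $\sum_{k\geq 0}c_{-k-1}\zeta^{-k-1}$ converges absolutely for $|\zeta|>r$, so $gf^{*}$ extends analytically to $\{r<|\zeta|<R\}$; reflecting, $fg^{*}=(gf^{*})^{*}$ is analytic on $\{1/R<|\zeta|<1/r\}\supset \mathbb{D}(0,1/r)\setminus\overline{\mathbb{D}}$, which is $(i)$. The nontrivial ingredient---and what I expect to be the principal technical hurdle when fleshing out the details---is precisely this last chain: converting ``$\sigma$ supported in $\overline{\mathbb{D}(0,\rho)}$'' into \emph{geometric} decay of the negative Laurent coefficients of $gf^{*}$, and from there into the radial analytic continuation of $gf^{*}$ across $\partial\mathbb{D}$ (equivalently, the meromorphic extension of $f$ out to $\mathbb{D}(0,1/r)$).
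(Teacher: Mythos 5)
Your proposal is correct, and the implications $(i)\Rightarrow(ii)\Rightarrow(iii)$ coincide with the paper's proof: the Green--Stokes identity together with the contour shift from $\partial\D$ to $\partial\D(0,\rho)$ gives $(ii)$, and Hahn--Banach plus the Riesz representation theorem gives $(iii)$. For $(iii)\Rightarrow(i)$, however, you take a genuinely different route. You Laurent-expand $gf^*$ in the a priori annulus around $\partial\D$, use the Green identity with monomials to identify $\pi c_{-k-1}=\int_\D\zeta^k|g|^2\,dm$, apply $(iii)$ to get $|c_{-k-1}|\le\|\sigma\|\rho^k/\pi$ for every $\rho\in(r,1)$, and conclude that the principal part converges for $|\zeta|>r$, giving the continuation. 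The paper instead introduces the Cauchy transforms $G$ of $|g|^2\chi_\D$ and $\hat\sigma$ of $\sigma$, writes $G=\bar fg+H$ in $\D$ with $H\in\mathcal{O}(\D)$ continuous up to $\partial\D$, observes that $(iii)$ forces $\hat\sigma=G$ outside $\overline\D$, and reads off $f^*g=\hat\sigma-H$ on $\partial\D$, the right side being analytic in $\D\setminus\overline{\D(0,\rho)}$. The two arguments are dual: your coefficients $\pi c_{-k-1}$ are precisely the moments $\int\zeta^k\,d\sigma$, which are, up to normalization, the Taylor coefficients at infinity of $\hat\sigma=G$. Your version makes the geometric decay and radius of convergence explicit; the paper's decomposition $G=\bar fg+H$ yields the continuation in one stroke without summing a series, and the same boundary-matching technique is reused and generalized in the proof of Proposition~\ref{lem:qi}. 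One small remark: your closing sentence flags the conversion from ${\rm supp\,}\sigma\subset\overline{\D(0,\rho)}$ to geometric coefficient decay as the likely technical hurdle, but your own sketch already closes it completely---the estimate $|c_{-k-1}|\le\|\sigma\|\rho^k/\pi$ and the arbitrariness of $\rho>r$ give $\limsup_k|c_{-k-1}|^{1/k}\le r$, and there is nothing further to fill in.
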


\begin{proof}
Assume $(i)$. Then for every $r<\rho<1$ we have
$$
\int_\D h|g|^2 dm=\frac{1}{2\I}\int_{\partial\D} h\bar f df
=\frac{1}{2\I}\int_{\partial\D} h f^*g
d\zeta=\frac{1}{2\I}\int_{\partial\D(0,\rho)}h f^*g d\zeta,
$$
where we used that $f^*g=(fg^*)^*\in \mathcal{O}(\D\setminus\overline{\D(0,{r})})$,
by assumption. Now $(ii)$ follows with $C_\rho=\frac{1}{2}\int_{\partial\D(0,\rho)}| f^*g|| d\zeta|$.

That $(ii)$ implies $(iii)$ follows from general functional analysis
(the Hahn-Banach theorem and the Riesz representation theorem for
functionals on $C(\overline{\D(0,\rho)})$, see \cite{Rudin-1987}).

Assume now $(iii)$ and we shall prove $(i)$. Consider the Cauchy transforms of $\sigma$ and,
$|g|^2\chi_\D$, defined by
$$
\hat{\sigma}(z)=\frac{1}{\pi}\int_\D \frac{
d\sigma(\zeta)}{z-\zeta},
$$
\begin{equation}\label{cauchyG}
G(z)=\frac{1}{\pi}\int_\D \frac{ |g(\zeta)|^2 dm(\zeta)}{z-\zeta},
\end{equation}
respectively. Here $G$ is defined and continuous in all $\C$ and
satisfies, in the sense of distributions,
$$
\frac{\partial G}{\partial\bar{z}}= \overline{g}g\chi_{\D}.
$$
Thus, in $\D$,
$$
G=\bar{f}g+H
$$
for some $H\in\mathcal{O}({\mathbb{D}})$. This equality also defines
$H$ on $\partial\D$, by which it becomes continuous on
$\overline{\D}$.

On the other hand, (\ref{sigma}) shows that $\hat{\sigma}=G$ outside
$\overline\D$, and by continuity this also holds on $\partial\D$.
Hence
$$
f^*g=\bar{f}g=G-H =\hat{\sigma}-H
$$
on $\partial\D$, and since the right member is holomorphic in
$\D\setminus \overline{\D(0,\rho)}$ the desired meromorphic
extension of $f$ follows.

\end{proof}

If $f(\cdot,t)\in \mathcal{O}_{\rm univ}(\overline{\mathbb{D}})$ is
a univalent weak solution then it is known
\cite{Gustafsson-Prokhorov-Vasiliev-2004}, \cite{Gustafsson-Vasiliev-2006}
that the radius of analyticity of $f$ is an increasing function of
time. In the non-univalent case this is no longer true, but there is
a related radius (essentially $1/r$ in the previous lemma) which is stable in
time (actually increases), and this will be a good enough
statement for our needs.

\begin{lemma}\label{lem:radius1}
Let $\tilde{\Omega}(\cdot,t)=\tilde{f}(\D,t)$ be a simply connected weak solution
on a Riemann surface $\mathcal{M}$ with projection $p:\mathcal{M}\to\C$ and let $f(\zeta,t)=p(\tilde{f}(\zeta,t))$.
Assume $q(t)\geq 0$ and that for a certain $0<r<1$ the equivalent conditions in
Lemma~\ref{lem:radius} hold for $f=f(\cdot, 0)$. Then they hold with
the same $r$ for all $f(\cdot,t)$, $t>0$.
\end{lemma}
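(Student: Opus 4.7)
The cleanest route is to verify condition (iii) of Lemma~\ref{lem:radius} for $f(\cdot,t)$ with the same $r$, by transporting the quadrature measure $\sigma_0$ from time $0$ via the subordination map. Fix $\rho\in(r,1)$ and let $\sigma_0$ be a signed measure supported in $\overline{\mathbb{D}(0,\rho)}$ with
\begin{equation*}
\int_{\mathbb{D}} h\,|g(\cdot,0)|^2\,dm = \int h\,d\sigma_0
\quad (h\in\mathcal{O}(\overline{\mathbb{D}})),
\end{equation*}
as provided by the assumption. The goal is to produce an analogous measure $\sigma_t$ supported in the same disk $\overline{\mathbb{D}(0,\rho)}$.

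Since $\tilde{\Omega}(t)$ is simply connected for each $t$, Proposition~\ref{prop:weaksolutionLK} furnishes a subordination chain structure with subordination maps $\varphi(\cdot,0,t)\colon\mathbb{D}\to\mathbb{D}$ satisfying the inequality (\ref{weakunitdisk2}) for subharmonic $h$. Applying that inequality to both $\pm h$ for harmonic $h$, and extending by linearity of real/imaginary parts to $h\in\mathcal{O}(\overline{\mathbb{D}})$, upgrades the inequality to the equality
\begin{equation*}
\int_{\mathbb{D}} h(z)|g(z,t)|^2\,dm(z) = \int_{\mathbb{D}} h(\varphi(\zeta,0,t))|g(\zeta,0)|^2\,dm(\zeta) + 2\pi Q(t)\,h(0).
\end{equation*}
Because $h\circ\varphi(\cdot,0,t)$ is holomorphic on $\mathbb{D}$ (hence in particular on the compact support of $\sigma_0$), condition (iii) at $t=0$ rewrites the first term on the right as $\int h\circ\varphi(\cdot,0,t)\,d\sigma_0$. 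Thus defining
\begin{equation*}
\sigma_t := \varphi(\cdot,0,t)_*\,\sigma_0 \;+\; 2\pi Q(t)\,\delta_0,
\end{equation*}
one obtains $\int_{\mathbb{D}} h\,|g(\cdot,t)|^2\,dm = \int h\,d\sigma_t$ for all $h\in\mathcal{O}(\overline{\mathbb{D}})$, which is exactly the identity (\ref{sigma}) at time $t$.

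The only remaining point is to check that $\mathrm{supp}\,\sigma_t\subset\overline{\mathbb{D}(0,\rho)}$. The defining relation $\tilde{f}(\varphi(\zeta,0,t),t)=\tilde{f}(\zeta,0)$ combined with $\tilde{f}(0,\cdot)=\tilde{0}$ forces $\varphi(0,0,t)=0$, so Schwarz's lemma applied to the univalent self-map $\varphi(\cdot,0,t)\colon\mathbb{D}\to\mathbb{D}$ gives $|\varphi(\zeta,0,t)|\le|\zeta|$ for $\zeta\in\mathbb{D}$. Hence $\varphi(\cdot,0,t)\bigl(\overline{\mathbb{D}(0,\rho)}\bigr)\subset\overline{\mathbb{D}(0,\rho)}$, and together with the Dirac mass at the origin this yields $\mathrm{supp}\,\sigma_t\subset\overline{\mathbb{D}(0,\rho)}$. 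Since $\rho\in(r,1)$ was arbitrary, condition (iii) of Lemma~\ref{lem:radius} holds for $f(\cdot,t)$ with the same $r$. There is no serious obstacle in this plan: the two mild points to be careful about are the passage from the subharmonic-test-function inequality to a holomorphic-test-function equality (handled via $\pm h$ for harmonic $h$), and checking the fixed-point property $\varphi(0,0,t)=0$ so that Schwarz's lemma is available; both are automatic from the normalizations in $\mathcal{O}_{\mathrm{norm}}$.
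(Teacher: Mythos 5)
Your proof is correct and essentially matches the paper's: both rest on the identity
$\int_{\D} h\,|g(\cdot,t)|^2\,dm = \int_{\D} (h\circ\varphi(\cdot,0,t))\,|g(\cdot,0)|^2\,dm + 2\pi Q(t)\,h(0)$
(the harmonic-test-function equality case of (\ref{weakunitdisk2})) together with $\varphi(0,0,t)=0$ and Schwarz's lemma. The only difference is cosmetic: you verify condition (iii) of Lemma~\ref{lem:radius} by explicitly transporting the measure $\sigma_t=\varphi(\cdot,0,t)_*\sigma_0 + 2\pi Q(t)\delta_0$, whereas the paper verifies condition (ii) by bounding $|\int h\,|g(\cdot,t)|^2\,dm|$ by $(C_\rho+2\pi Q(t))\sup_{\D(0,\rho)}|h|$; since (ii) and (iii) are equivalent, these are the same argument.
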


\begin{proof}
If  $f(\cdot,t)\in \mathcal{O}_{\rm norm}(\overline{\mathbb{D}})$ is
a weak solution on $\mathcal{M}$ starting at $t=0$ then, by
(\ref{weakunitdisk2}),
$$
\int_{\D} h (z)|g(z,t)|^2 \,dm(z) =\int_{\D} h
(\varphi(\zeta,0,t))|g(\zeta,0)|^2 \,dm(\zeta)+2\pi Q(t)h(0)
$$
for all $h\in\mathcal{O}(\overline\D)$. Assume now that condition
$(ii)$ of Lemma~\ref{lem:radius} holds at $t=0$ for some $0<r<1$.
Since $|\varphi(\zeta,0,t)|\leq |\zeta|$ by Schwarz' lemma we then
get, for an arbitrary $\rho$ with $r<\rho<1$,
$$
|\int_{\D} h (\varphi(\zeta,0,t))|g(\zeta,0)|^2dm(\zeta)| \leq C_\rho
\sup_{\zeta\in\D(0,\rho)}|h (\varphi(\zeta,0,t))| \leq C_\rho
\sup_{z\in\D(0,\rho)}|h (z)|,
$$
hence
$$
|\int_{\D} h (z)|g(z,t)|^2 \,dm(z)|\leq (C_\rho+2\pi
Q(t))\sup_{z\in\D(0,\rho)}|h (z)|.
$$
This shows that $(ii)$ of Lemma~\ref{lem:radius} holds also at any time
$t>0$, with the same $r$ as for $t=0$, which is what we needed to prove.

\end{proof}

The final auxiliary result is a conjecture, which is very likely to be true but for which we still have no complete proof at present. It was therefore
was listed among the assumptions in Theorem~\ref{thm:universal}. It  concerns the issue of keeping $\tilde{\Omega}(t)$ simply connected
all the time. With the weak solution pulled back to $\D$, as in Example~\ref{ex:usageweaksol},
the crucial statement becomes the following, formulated in terms of (\ref{weaksolutionexample}).

\begin{conj}\label{lem:simplyconnected}
Let $g\in\mathcal{O}(\overline{\D})$ be fixed (independent of $t$)
and denote by $\{D(t): 0\leq
t<\varepsilon\}$ the weak solution for the weight $|g|^2$ and
initial domain $D(0)=\D$, with $\varepsilon>0$ is so small
that all domains $D(t)$ are compactly contained in the region of analyticity
of $g$. In other words,
$$
\int_{D(t)}h|g|^2dm \geq \int_{\D}h|g|^2dm+2\pi Q(t)h(0)
$$
for every $h\in SL^1(D(t),|g|^2m)$, and $0\leq t\leq \varepsilon$. Then, if $\varepsilon >0$ is sufficiently
small, the domains $D(t)$ are star-shaped with respect to the origin, in
particular simply connected.
\end{conj}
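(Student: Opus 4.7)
The plan is to reformulate the statement as an obstacle problem and establish star-shapedness by combining classical free-boundary stability away from the zeros of $g$ on $\partial\D$ with a local model analysis at those zeros.

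\textbf{Setup and the easy regime.} By Definition~\ref{def:partialbalayage} one has $D(t)=\D\cup\{u_t>0\}$ up to null-sets, where $u_t\geq 0$ is the smallest nonnegative distributional solution of
\[
\Delta u_t\leq \chi_{\C\setminus\overline{\D}}|g|^2-2\pi Q(t)\delta_0.
\]
Equivalently, $u_t(z)+Q(t)\log|z|$ is harmonic near $0$, $u_t$ is harmonic in $\D\setminus\{0\}$, $\Delta u_t=|g|^2$ in $D(t)\setminus\overline{\D}$, and $u_t=|\nabla u_t|=0$ on $\partial D(t)$. Star-shapedness of $D(t)$ is equivalent to star-shapedness of the noncoincidence set $\{u_t>0\}$. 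From $\int_{D(t)\setminus\D}|g|^2\,dm=2\pi Q(t)$, the monotonicity $D(s)\subset D(t)$ for $s\leq t$, and a standard mass estimate, one deduces $D(t)\to\overline{\D}$ in Hausdorff distance as $t\downarrow 0$. Let $Z=\{g=0\}\cap\partial\D$, which is finite. On compact subarcs of $\partial\D\setminus Z$ one has $|g|^2\geq c>0$, so the stability results of \cite{Caffarelli-1981} and \cite{Friedman-1982} apply: there $\partial D(t)$ is a smooth radial graph $r=1+\varepsilon(\theta,t)$ with $\varepsilon\geq 0$ small, hence star-shaped.

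\textbf{Local model at a zero of $g$.} The remaining difficulty is confined to neighborhoods of the finitely many points of $Z$. Near $\zeta_0\in Z$ of order $k$, write $g(\zeta)=c(\zeta-\zeta_0)^k(1+o(1))$, and introduce the self-similar scaling $\zeta=\zeta_0+Q(t)^{1/(2k+2)}w$ together with the compatible normalization of $u_t$ chosen so that the equation $\Delta\tilde u=|cw^k|^2$ and a unit total $|g|^2$-mass constraint survive in the limit. As $t\downarrow 0$ the rescaled obstacle problem formally converges to a model problem on the $\zeta_0$-tangent half-plane to $\D$, with weight $|cw^k|^2$ and a prescribed mass to be absorbed into the noncontact set. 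I would analyze this model, either by an explicit solution in the simplest cases ($k=1$) or by constructing upper and lower barriers, to show that its noncoincidence set is a star-shaped graph over the tangent line, and then invoke matched asymptotics or barrier comparison to transfer this structure back to $\partial D(t)$ for small $t$. Combined with the previous paragraph, this would yield global star-shapedness of $D(t)$.

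\textbf{Main obstacle.} The direct radial-derivative approach, which in the flat case ($g\equiv 1$) controls star-shapedness via the sign of $v_t:=z\cdot\nabla u_t$, breaks down here because
\[
\Delta v_t=(z\cdot\nabla)|g|^2+2|g|^2\quad\text{in }D(t)\setminus\overline{\D},
\]
a quantity whose sign is not controlled unless $|g|^2$ is itself star-shaped, and the matching of $v_t$ across $\partial\D$ introduces additional boundary terms at the zeros of $g$. Therefore one cannot just compare $v_t$ to a sub/supersolution. The alternative routes, a matched-asymptotic gluing between the local model at each $\zeta_0\in Z$ and the classical free boundary on the rest of $\partial\D$, or a moving-plane argument across lines through $0$, both hinge on excluding non-star-shaped oscillations of $\partial D(t)$ at all intermediate scales between the $Q(t)^{1/(2k+2)}$ scale of the local model and the macroscopic scale. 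Rigorously ruling out such intermediate oscillations is precisely the point at which Sakai was stuck in \cite{Sakai-1988}, and is why we have had to leave the statement as Conjecture~\ref{lem:simplyconnected}.
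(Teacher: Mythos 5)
You have correctly reproduced the obstacle-problem reformulation and, via $v_t=z\cdot\nabla u_t=r\partial_r u_t$, arrived at the same PDE for $v_t$ as the paper: your $\Delta v_t=(z\cdot\nabla)|g|^2+2|g|^2$ is exactly the paper's $\Delta v=2|g|^2\re\bigl(1+zg'/g\bigr)$. You also identify exactly the same obstruction the paper does --- the sign of $\Delta v_t$ is not controlled near the zeros of $g$ on $\partial\D$, so the maximum principle does not close the argument --- and you correctly trace this back to Sakai's difficulty in \cite{Sakai-1988}.

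Of course, the statement is left as a conjecture in the paper precisely because none of these steps complete the proof; the paper only offers ``some steps towards a proof,'' and your write-up, to its credit, is equally honest that it does not deliver one. The one genuine divergence between the two partial attempts is in the local analysis near a zero $\zeta_0\in Z$. The paper performs an explicit algebraic Taylor-expansion of $2|g|^2\re(1+zg'/g)$ around $\zeta_0$, writing $g(z)=(z-1)^d h(z)$ and completing the square to show $\Delta v$ is positive outside the disk $\D\bigl(\tfrac{d+2}{2d+2},\tfrac{d}{2d+2}\bigr)$, in particular outside $\overline{\D}$, plus a residual term $|z-1|^2\re\frac{zh'(z)}{h(z)}$ of uncontrolled sign; this suffices in special cases such as $h\equiv$ const.\ (the cardioid of Section~\ref{sec:cardioid3}) but not in general. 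Your replacement --- a self-similar rescaling $\zeta=\zeta_0+Q(t)^{1/(2k+2)}w$ and a blow-up model on the tangent half-plane with weight $|cw^k|^2$ --- is a different and reasonable way to isolate the local behavior, but as you note it introduces a matching problem at intermediate scales that is not rigorously controlled, and so it does not get any further than the paper's more elementary computation. In short: your proposal is methodologically faithful to the paper's partial argument, adds a blow-up framing the paper does not use, and stops, correctly, at the same open gap.
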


In terms of partial balayage, $D(t)\supset\D$ is given by
$$
\text{Bal\,}(2\pi Q(t)\delta_0, |g|^2\chi_{G\setminus\D}m)=|g|^2\chi_{D(t)\setminus\D}m,
$$
where $G\supset \overline{\D}$ is the domain of analyticity of $g$.
If $g$ has no zeros on $\partial \D$, then Conjecture~\ref{lem:simplyconnected} 
indeed holds, by virtue of stability results for free boundaries \cite{Caffarelli-1981}, \cite{Friedman-1982},
or else by existence of classical solutions \cite{Escher-Simonett-1997}. 
But we need Conjecture~\ref{lem:simplyconnected}
exactly in the case when $g$ has zeros on $\partial\D$.
 
{\it Some steps towards a proof of Conjecture~\ref{lem:simplyconnected}.}
In terms of the function $u=u(z,t)$ appearing in Definition \ref{def:partialbalayage} for the choice
$\mu=2\pi Q(t)\delta_0$, $\lambda=|g|^2\chi_{G\setminus \D} m$, the weak solution
$\{D(t):0\leq t<\varepsilon\}$ is given by
$$
D(t)=\{z\in\C:u(z,t)>0\}
$$
with $u$ satisfying (and determined by)
$$
\begin{cases}
u\geq 0 \quad \text{in}\quad \C,\\
\Delta u = |g|^2\chi_{D(t)\setminus\D} -2\pi Q(t)\delta_0 \quad \text{in}\quad \C,\\
u=|\nabla u|=0 \quad \text{outside}\quad D(t).
\end{cases}
$$
These properties follow in a standard manner from the complementarity
system (of  type (\ref{twoineq}), (\ref{compl})) satisfied by $u$.

Now write, in terms of polar coordinates $z=re^{\I\theta}$,
$$
v=r\frac{\partial u}{\partial r}.
$$
This function is continuous in $\C\setminus\{0\}$
because the elliptic partial differential equation which $u$ satisfies (in the
sense of distributions) shows that $u$ is continuously differentiable,
even across $\partial D(t)$. In order to show that
$D(t)$ is star-shaped it is enough to show that $u$ decreases in each radial direction, i.e., that $v\leq 0$. This
is what one hopes to show, for $t>0$ small enough.

In the region $D(t)\setminus\D$ we have $\Delta u=|g|^2$, and easy
computations show that this translates into the equation
\begin{equation}\label{Deltav}
\Delta v=2|g|^2\re(1+\frac{zg'}{g}), \quad z\in D(t)\setminus\D,
\end{equation}
for $v$. As to boundary conditions we have
$$
v=0 \quad \text{on}\quad \partial D(t)
$$
since $u$ vanishes together with its first derivative there.
Inside $\D$, $u(z)=-Q(t)\log|z|+\text{harmonic}$, and except for the logarithmic singularity at
the origin, $u$ is continuously differentiable in all $D(t)$. 
It follows that $v$ is harmonic in $\D$ with $v(0)=-Q(t)$ and that $v$ is continuous in all $D(t)$.
(On $\partial\D$ there is a jump in the first derivatives.)

Unfortunately we cannot be sure of the sign of the right member of (\ref{Deltav}).
If we knew that it was nonnegative, then
the desired conclusion $v\leq 0$ would follow immediately from the maximum
principle. To clarify the situation we make a local analysis around a point on $\partial\D$ at which $g$
vanishes. We may assume that this point is $z=1$, and then we can write
$$
g(z)=(z-1)^d h(z),
$$
where $d$ is the order of the zero and $h$ is analytic with $h(1)\ne 0$.
We then compute the right member of (\ref{Deltav}) as
$$
2|g(z)|^2\re(1+\frac{zg'(z)}{g(z)})=2|z-1|^{2d}|h(z)|^2\re(1+\frac{dz}{z-1}+\frac{zh'(z)}{h(z)})
$$
$$
=2|z-1|^{2d}\re(zh'(z)\overline{h(z)})+2|z-1|^{2(d-1)}|h(z)|^2\re((z-1)(\bar z-1)+d\cdot z(\bar z-1))
$$
$$
=2|z-1|^{2(d-1)}|h(z)|^2 
\left(|z-1|^2\re\frac{zh'(z)}{h(z)}
+(d+1)(|z-\frac{d+2}{2d+2}|^2-(\frac{d}{2d+2})^2\right).
$$
Here the second term inside the bracket is positive outside the circle with center $\frac{d+2}{2d+2}$ and radius $\frac{d}{2d+2}$, 
in particular outside $\D$, while the first term may have any sign.

Thus,  the right member in (\ref{Deltav}) is positive in major parts of neighborhoods (outside $\D$) of 
points on $\partial\D$ where $g$ vanishes. 
In some examples, like if $h$ is constant, which will be the case in the example in Section~\ref{sec:cardioid3} below,
it follows that the right member in (\ref{Deltav}) is positive in all $D(t)\setminus\D$, and the star-shapedness can
be inferred.  
Close to other points on $\partial\D$ one can perform an analysis based on
known stability behavior of free boundaries  \cite{Caffarelli-1981}, \cite{Friedman-1982}.
This gives at least that, outside any fixed neighborhood of $z=1$, $D(t)$ does not have any holes if $t>0$ is small
enough.

\begin{proof} (of theorem)

To get started, observe that we can find $\mathcal{M}$ so that $(i)$ holds. It is just to take $\mathcal{M}=\D$,
$\tilde{f}(\zeta,0)=\zeta$ and $p=f(\cdot,0)$. Compare the proof of
Lemma~\ref{lem:riemannsurface}. The remaining part of the proof
consists of extending the Riemann surface $\mathcal{M}$ so that $(ii)$ remains
valid; $(i)$ will automatically remain valid.

So assume that we have constructed $\mathcal{M}$ so that $(ii)$ holds on a
time interval $[0,T]$, where $T\geq 0$ ($T=0$ not excluded). We
shall show how to extend $\mathcal{M}$ (if necessary) and the solution, to
some interval $[0,T+\varepsilon]$, $\varepsilon>0$.

We have $\tilde{\Omega}(t)=\tilde{f}(\D,t)$, where
$\tilde{f}(\cdot,t):\D\to \mathcal{M}$,
$f(\cdot,t)=p\circ\tilde{f}(\cdot,t)\in \mathcal{O}_{\rm
norm}(\overline{\D})$, $f(\cdot,t)$ is a subordination chain and
$f(\cdot,t)$ is meromorphic in a disk $\D(0,\frac{1}{r})$, with
$0<r<1$ independent of $t$ by Lemma~\ref{lem:radius1}.

Set $\mathcal{M}_T=\tilde{\Omega}(T)\subset \mathcal{M}$. This is the only part of $\mathcal{M}$
which is needed up to time $T$, and it can be identified with
$\D_T=\D$ via $\tilde{f}(\cdot,T)$. Now choose $1<\rho< \frac{1}{r}$
(with $r$ as in Lemma~\ref{lem:radius1}) so that $f'(\zeta,T)$ has
no zeros for $1<|\zeta|<\rho$ (but may have it for $|\zeta|=1$).
Viewing $\D(0,\rho)\supset\D$ as a Riemann surface over $\C$ with
covering map $f(\cdot,T)$ we get, on the level of abstract Riemann
surfaces, an extension $\mathcal{M}'\supset \mathcal{M}$ of $\mathcal{M}$. On $\mathcal{M}'$ we can continue
the weak solution to some time interval $[0,T+\varepsilon]$,
$\varepsilon>0$. Compare the discussion in Example~\ref{ex:usageweaksol}. Assuming $\varepsilon>0$ is small enough this
solution $\tilde{\Omega}(t)$ remains simply connected, assuming
Conjecture~\ref{lem:simplyconnected}. Then set
$\mathcal{M}_{T+\varepsilon}=\tilde{\Omega}(T+\varepsilon)$.

Thus we can always extend a weak solution defined on a closed time
interval to a larger interval. We also have to show that whenever we
have a solution on a half-open interval $[0,T)$ (with $T>0$) it can
be extended to the closure $[0,T]$. However, this is fairly
immediate because we can simply define
$\tilde{\Omega}(T)=\mathcal{M}_T=\cup_{0\leq t<T} \mathcal{M}_t$ (cf. proof of
Lemma~\ref{lem:riemannsurface}). This surface is easily seen to be
simply connected (because any closed curve in $\mathcal{M}_T$ will lie
entirely in $\mathcal{M}_t$ for some $t<T$). Moreover, the defining property
(\ref{weaksolutionR}) of a weak solution will hold on all
$[0,T]$, and since the radius of analyticity of $f(\cdot,T):\D\to
\tilde{\Omega}(T)$ is larger than one (Lemma~\ref{lem:radius1}),
$\tilde{\Omega}(T)\cong \D$ will have compact closure in a larger
Riemann surface $\mathcal{M}\supset \mathcal{M}_T$, on which the evolution may continue.

The above arguments show that there is no finite stopping time for
the construction of $\mathcal{M}$ and a simply connected weak solution in $\mathcal{M}$.
Therefore part $(ii)$ of the theorem follows.

Assertion $(iii)$ of the theorem is an easy consequence of Proposition~\ref{prop:compatibility}.
\end{proof}


\section{Example: several evolutions of a cardioid}\label{sec:examples}

In order to illustrate Theorem~\ref{thm:universal}, as well as some forthcoming results, in particular
Theorem~\ref{thm:rational}, we shall consider three different Hele-Shaw evolutions which all start out from the cardioid
$\Omega(0)=f(\D,0)$, where
\begin{equation}\label{polycardoid}
f(\zeta,0)=\zeta -\frac{1}{2}\zeta^2.
\end{equation}
Thus $g(\zeta,0)=1-\zeta$, $\omega_1(0)=1$, and there is
a cusp at the point $f(1,0)=\frac{1}{2}$ on $\partial\Omega(0)$.
It is a major open problem to find some natural way to make Hele-Shaw suction ($q<0$) starting from the above cardioid,
and we have not made much progress on that, except for a minor remark at the end of the example in Section~\ref{sec:cardioid3}. 
In essence, our three solutions will all correspond to injection ($q>0$), and two
of them will be non-univalent. 


\subsection{The univalent solution}\label{sec:cardioid1}
This is the ordinary univalent Hele-Shaw evolution $f(\cdot,t)\in\mathcal{O}_{\rm univ}(\overline{\D})$,
which by conservation of $M_1=a_1^2\bar{a}_2=-\frac{1}{2}$ is given by
$$
f(\zeta,t)=a_1(t)\zeta+a_2(t)\zeta^2= a_1(t)\zeta-\frac{1}{2a_1(t)^2}\zeta^2.
$$
Adapting $q(t)$ so that $a_1(t)=e^t$ ($0\leq t<\infty$), for example, gives
$$
f(\zeta,t)=e^t\zeta-\frac{1}{2}e^{-2t}\zeta^2, \quad q(t)=e^{2t}-e^{-4t}.
$$
Note that $\omega_1(t)=e^{3t}$ starts out with finite speed, despite the cusp. This is possible because $q(0)=0$.
For $t>0$, $q(t)>0$.


\subsection{A non-univalent solution of the Polubarinova-Galin equation}\label{sec:cardioid2}
In the univalent solution, the coefficient $a_1$ ranges over the interval $1\leq a_1<\infty$, and the moment 
$M_0=a_1^2+2|a_2|^2=a_1^2+\frac{1}{2}a_1^{-4}$
is an increasing function of $a_1$. But, as a function of $a_1$,  $M_0$ is strictly convex on the entire interval $0<a_1<\infty$, and 
it has a minimum for $a_1=1$. Thus $M_0$ increases also as $a_1$ decreases from $1$ to $0$. Choosing then $a_1=e^{-t}$,
$0\leq t<\infty$, gives our second solution
$$
f(\zeta,t)=e^{-t}\zeta-\frac{1}{2}e^{2t}\zeta^2, \quad q(t)=e^{4t}-e^{-2t}.
$$
This is not even locally univalent, but it does solve the Polubarinova-Galin equation. The zero of $g(\zeta,t)$, $\omega_1(t)=e^{-3t}$,
moves from the unit circle towards the origin, and its image point, $f(e^{-3t},t)=\frac{1}{2}e^{-4t}$ also moves. Therefore the solution
cannot be lifted to a fixed Riemann surface, and $f(\zeta,t)$ does not solve the L\"owner-Kufarev equation (see Theorem \ref{lkpg}).


\subsection{A non-univalent solution of the L\"owner-Kufarev equation}\label{sec:cardioid3}

Even though the first, univalent, solution is perfectly good in all respects, the solution which is
constructed in the proof of Theorem~\ref{thm:universal} is a different one, namely one
which goes up on a Riemann surface with two sheets. This is because the solution in the proof
is constructed in such a way that at any time, say $t=t_0$, at which a zero of $g(\zeta,t)$
reaches $\partial\D$, the continued solution propagates on the Riemann surface which
uniformizes $f^{-1}(\zeta,t_0)$ in a neighborhood of $\overline{\D}$.
This is a necessary step in most cases, but occasionally (as in the present
example, with $t_0=0$) it turns out that the original Riemann surface was actually good enough.

Below we calculate that solution which the proof of Theorem~\ref{thm:universal} would have given us.
This has the additional advantage of giving a reference solution which can be used as comparison in
order to obtain estimates for other solutions.
The idea (see also Section~\ref{sec:rational}) is that the initial $g$, which we write as
$$
g(\zeta,0)=(1-\zeta)\,\frac{(\zeta-1)(\zeta-1)}{(\zeta-1)^2},
$$
continues as
$$
g(\zeta,t)= b(t)\,\frac{(\zeta-\omega_1(t))(\zeta-\omega_2(t))(\zeta-\omega_3(t))}{(\zeta-\zeta_1(t))^2},
$$
where one of the zeros, say $\omega_1(t)$, moves into $\D$, $\zeta_1(t)=\omega_1^*(t)$ and where $\omega_2(t)$,
$\omega_3(t)$ in addition are chosen so that $g(\zeta,t)$ has no residues. This means that $f(\zeta,t)$ will be of the form
\begin{equation}\label{fcardoid}
f(\zeta,t)=-\frac{b_1\zeta+b_2\zeta^2+b_3\zeta^3}{\zeta-\zeta_1}
\end{equation}
with $b_1=b_1(t)$, $b_2=b_2(t)$, $b_3=b_3(t)$ and $\zeta_1=\zeta(t)$ all real. The parameters $b_1$ and $\zeta_1$
will turn out to be positive and strictly increasing in time. At time $t=0$ we have
\begin{equation}\label{bk}
\begin{cases}
b(0)=-1,\\
b_1(0)=1,\\
b_2(0)=-\frac{3}{2},\\
b_3(0)=\frac{1}{2},\\
\omega_1(0)=\omega_2(0)=\omega_3(0)=\zeta_1(0)=1.
\end{cases}
\end{equation}

From (\ref{fcardoid}) we obtain
\begin{equation}\label{gfrac}
g(\zeta,t)=\frac{b_1 \zeta_1+2b_2 \zeta_1\zeta-(b_2-3b_3\zeta_1)\zeta^2-2b_3\zeta^3}{(\zeta-\zeta_1)^2},
\end{equation}
$$
f^*(\zeta,t)=\frac{b_1\zeta^2+b_2\zeta+b_3}{\zeta^2(\zeta_1\zeta-1)}.
$$
The coefficients $b_j=b_j(t)$ and the pole $\zeta_1=\zeta_1(t)$ are to be determined according
to the following principles:

\begin{itemize}

\item The reflected point of $\zeta_1(t)$ is to be a zero of $g$:
$$
g(1/\zeta_1(t),t)=0.
$$

\item $f(\cdot ,t)$ shall map the above point $1/\zeta_1(t)$ to a point which does not move:
$$
f(1/\zeta_1(t),t)={\rm constant}=f(1,0)=\frac{1}{2}.
$$

\item The moment $M_1(t)$ is conserved in time:
$$
M_1(t)=\res_{\zeta=0}(ff^*gd\zeta)= M_1(0)=-\frac{1}{2}.
$$

\item $M_0(t)$ evolves according to
$$
M_0(t)=\res_{\zeta=0}(f^*gd\zeta)=M_0(0)+2Q(t)=\frac{3}{2}+2Q(t).
$$

\end{itemize}

The constant values $\pm \frac{1}{2}$ and $\frac{3}{2}$ above are obtained from the initial data
(\ref{bk}). Spelling out, the above equations become
$$
\begin{cases}
b_1 \zeta_1^4 +2b_2 \zeta_1^3- (b_2-3b_3\zeta_1)\zeta_1- 2b_3=0,\\
b_1\zeta_1^2+b_2 \zeta_1+b_3 +\frac{1}{2}\zeta_1^2(1-\zeta_1^2)=0,\\
b_1^2b_3 -\frac{1}{2}\zeta_1^2=0,\\
b_1b_2\zeta_1+2b_2b_3\zeta_1+b_1b_3(\zeta_1^2+2)+(\frac{3}{2}+2Q)\zeta_1^2=0.
\end{cases}
$$
Here we have four equations for the five time dependent parameters $b_1$, $b_2$, $b_3$, $\zeta_1$ and $Q$.
It turns out that it is possible to solve this system by expressing all paramenters in terms of $b_1$:
$$
\begin{cases}
\zeta_1=+\sqrt{\frac{1}{2}(1+2b_1-\frac{1}{b_1^2})},\\
b_2= -\frac{\zeta_1}{4}(1+2b_1 +\frac{3}{b_1^2}),\\
b_3= \frac{2b_1^3+b_1^2-1}{4b_1^4},\\
Q=\frac{1}{16b_1^6}(4b_1^8+2b_1^7-12b_1^6+b_1^4+6b_1^3+2b_1^2-3).
\end{cases}
$$
The range for $\zeta_1=\zeta(t)$ is $1\leq \zeta_1<\infty$. At time $t=0$ we shall have $\zeta_1=1$.
Then also $b_1=1$, and since one easily checks that $\frac{d\zeta_1}{db_1}>0$ it is appropriate to fix
the time scale by setting
$$
b_1(t)=e^t.
$$
By this all parameters $b_1$, $b_2$, $b_3$, $\zeta_1$, $Q$ become explicit functions of $t$.
Including expansions for small $t>0$ we have
\begin{equation}\label{explicitcoefficients}
\begin{cases}
\zeta_1(t)=\sqrt{\frac{1}{2}(1+2e^t-e^{-2t})}=1+t-\frac{3}{4}t^2+O(t^3),\\
b_1(t) =e^t=1+t+\frac{1}{2}t^2+O(t^2),\\
b_2(t)=-\frac{1}{4\sqrt{2}} (1+2e^t+3e^{-2t}) \sqrt{1+2e^t-e^{-2t}}
= -\frac{3}{2}-\frac{1}{2}t+\frac{3}{8}t^2+O(t^3),\\
b_3(t)=\frac{1}{4}(2e^{-t}+e^{-2t}-e^{-4t})=\frac{1}{2}-\frac{5}{4}t^2+O(t^3),\\
Q(t)=\frac{1}{16}(4e^{2t}+2e^t-12+ e^{-2t}+6e^{-3t}+2e^{-4t}-3e^{-6t})= 4t^3+O(t^4).
\end{cases}
\end{equation}

This gives
\begin{equation}\label{explicitq}
q(t)=\frac{1}{8}(4e^{2t}+e^t-e^{-2t}-9e^{-3t}-4e^{-4t}+9e^{-6t})=12t^2+O(t^3),
\end{equation}
$$
f(\zeta,t)=-\frac{2(1+t)\zeta-(3+t)\zeta^2+\zeta^3+O(t^2)}{2(\zeta-1-t+O(t^2))}.
$$
Thus $q(0)=\dot{q}(0)=0$, while for $t>0$, $q(t)>0$, so the evolution is very slow in the beginning,
in fact so slow that it is not at all singular at $t=0$. 

As for $g(\zeta,t)$, we already know (by construction) that one of its zeros is
$\omega_1(t)=1/{\zeta_1(t)}$.
By dividing out this zero in (\ref{gfrac}) one gets $g$ on the form
$$
g(\zeta,t)=-2b_3(t) \,\frac{(\zeta-1/\zeta_1(t))(\zeta^2-\frac{1}{2}(b_1(t)^2+3)\zeta_1(t)\,\zeta+b_1(t)^3)}{(\zeta-\zeta_1(t))^2},
$$
and the remaining two zeros $\omega_2(t)$, $\omega_3(t)$ are the zeros of the second degree polynomial in the numerator.
One easily checks that the discriminant of that polynomial is negative on some interval $0<t<\varepsilon$, hence
$\omega_2(t)$, $\omega_3(t)$ are non-real (a complex conjugate pair) for those values of $t$. For large $t$ they are however real 
(the discriminant is positive). From
$$
\omega_2(t)+\omega_3(t)=\frac{1}{2}(b_1(t)^2+3)\zeta_1(t), \quad \omega_2(t)\omega_3(t)=b_1(t)^3
$$
one also realizes that the real parts of the two roots are increasing functions of $t$, for all $0<t<\infty$.

The solution $f(\zeta,t)$ represents  an evolution
of the cardioid which is non-univalent regarded as a map into $\C$ but which can be
viewed as a univalent map into a two-sheeted Riemann surface over $\C$. It is the solution
which comes out of the construction in the proof of Theorem~\ref{thm:universal}.
Conjecture~\ref{lem:simplyconnected} then concerns the solution pulled back to the unit disk by $f(\zeta,0)=\zeta-\frac{1}{2}\zeta^2$,
thus the function $g$ in that lemma is $g(\zeta)=f'(\zeta,0)=1-\zeta$. The inverse of $f(\zeta,0)$ is $f^{-1}(z,0)=1-\sqrt{1-2z}$,
hence one gets the function
$$
F(\zeta,t)=f^{-1}(f(\zeta,t),0)=1-\sqrt{1+\frac{2}{\zeta-\zeta_1(t)}(b_1(t)\zeta+b_2(t)\zeta^2+b_3(t)\zeta^3)},
$$
which, for $0<t<\varepsilon$ say, maps $\D$ conformally onto the slightly larger domain $D(t)$
(in the notation of Conjecture~\ref{lem:simplyconnected}). Because of the square root it is not entirely
trivial that $F(\zeta,t)$ is single-valued in $\D$. The pole at $\zeta=\zeta_1(t)$ causes no problem in this
respect since $\zeta_1(t)\notin\D$, but since $1\in D(t)=F(\D,t)$ there must be a point in $\D$ for which the expression under the square root vanishes. 
However, despite this the square root does in fact resolve into a single-valued function in $\D$.
In terms of the notations in Lemma~\ref{lem:riemannsurface} and Section~\ref{sec:lifting} we have $F(\zeta,t)=\tilde{f}(\zeta,t)$,
$f(\zeta,0)=p(\zeta)$, $g(\zeta)=p'(\zeta)$.

The domains $D(t)$ are star-shaped with respect to the origin, hence the solution exists for all $0<t<\infty$, cf.
\cite{Gustafsson-Prokhorov-Vasiliev-2004}.  Indeed, the star-shapedness
follows from equation (\ref{Deltav}) in the proof of Theorem~\ref{thm:universal}, which in the present context becomes
$$
\Delta v= 4(|z-\frac{3}{4} |^2-\frac{1}{16}), \quad z\in D(t)\setminus\D.
$$
Here, the right member is non-negative, and as $\Delta v=0$ in $\D$, $v$ is continuous in $D(t)$ and $v=0$ on $\partial D$,
the required inequality $v\leq 0$ in $D(t)$ follows from the maximum principle.

An interesting aspect is that the now fully explicit solution $f(\zeta,t)$, defined for $0<t<\infty$ by (\ref{fcardoid}), (\ref{explicitcoefficients}),
is not only smooth at $t=0$, it even has a real analytic continuation across $t=0$.
This extended solution, defined on $-\varepsilon<t<\infty$ (say), has the drawback that it has a pole inside $\D$ ($\zeta_1(t)\in\D$ 
when $t<0$), but $q(t)$ remains positive
for $t<0$, as can be seen from (\ref{explicitq}). This means that the solution represents suction out of the cardioid as $t$ decreases to negative values.


\section{Structure of rational solutions}\label{sec:rational}

In this section we shall prove that the property of $g=f'$ being
a rational function is preserved in time for weak solutions as long as they remain simply connected. 
In other words, it is preserved by the L\"owner-Kufarev equation, even under transition of zeros of $g$
through $\partial\D$.  However,  $g$ acquires additional zeros and poles
under such an event, and the transition will not be smooth.
We shall also show that for certain other solutions
of the Polubarinova-Galin equation rationality is also preserved.

We shall give two avenues to the question of rationality: first a direct approach, just making an `Ansatz' of a rational $g$ in a suitable version of the Polubarinova-Galin equation, and secondly
via quadrature identities,
which are related to the concept of a weak solution and which can incorporate transitions
of zeros through $\partial\D$.


\subsection{Direct approach}

We assume that $g$ rational of the form (\ref{structureg}),
and we address the question to which extent this form is preserved in time for
solutions of the Polubarinova-Galin or L\"owner-Kufarev equations when $g$ is
allowed to have zeros in $\D$.

Recall (Theorem~\ref{thm:R}) that the Polubarinova-Galin equation is equivalent
to a relaxed version of the L\"owner-Kufarev equation,
\begin{equation}\label{lkgen1}
\dot{f}(\zeta,t)=\zeta
f'(\zeta,t)\left(P(\zeta,t)+R(\zeta,t)\right),
\end{equation}
where $P(\zeta,t)$ is the Poisson integral (\ref{poisson}) and where
$R(\zeta,t)$ is any function of the form (\ref{definitionR}). We shall here assume, for simplicity, that
the zeros $\omega_j$ of $g$ are simple, and then (\ref{definitionR}) becomes
\begin{equation}\label{definitionR1}
R(\zeta,t)=\I\im\sum_{\omega_j\in\D} \frac{2B_j(t)}{\omega_j(t)}
+\sum_{\omega_j\in\D}\left( \frac{2B_j(t)}{\zeta -\omega_j(t)}-
\frac{2\overline{B_j(t)}\zeta}{1 -\overline{\omega_j(t)}\zeta}
\right).
\end{equation}

The equation (\ref{lkgen1}) for $f$ is equivalent to the equation, generalizing (\ref{PGg}),
\begin{equation}\label{PRGg}
\frac{\partial}{\partial t}(\log g)=\zeta (P+R)\frac{\partial}{\partial \zeta}(\log g)+\frac{\partial}{\partial \zeta}(\zeta (P+R))
\end{equation}
for $g$. Here the derivatives of $\log g$ are obtained from (\ref{loggdot}), (\ref{loggprime}), and it only
remains to evaluate the Poisson integral $P(\zeta,t)$. This can be done by a simple residue calculus in
(\ref{poisson}), using that $|g(\zeta,t)|^2=g(\zeta,t)g^*(\zeta,t)$
is a rational function in $\zeta$. However, the calculation becomes more
transparent if everything is done at an algebraic level, by which it
essentially reduces to an expansion in partial fractions.

Recall that, by definitions of $P$ and $R$,
\begin{equation}\label{PPstar}
P(\zeta,t)+P^*(\zeta,t)=\frac{2q(t)}{g(\zeta,t)g^*(\zeta,t)},
\end{equation}
$$
R(\zeta,t)+R^*(\zeta,t)=0.
$$
The rational function $q(t)/g(\zeta,t)g^*(\zeta,t)$ has
poles at the zeros of $g$ and $g^*$, i.e., at $\omega_1,
\dots,\omega_m, \omega_1^*, \dots,\omega_m^*$. At infinity it has
the behavior (by (\ref{structureg}))
\begin{align}\label{Ainfty}
\lim_{\zeta\to\infty}\frac{q(t)}{g(\zeta,t)g^*(\zeta,t)}
=A_\infty=\begin{cases}
\frac{q\prod_{j=1}^n\bar{\zeta}_j}{|b|^2\prod_{j=1}^m\bar{\omega}_j}
\quad &{\rm if\,\,} m=n, \\
0 \quad &{\rm if\,\,} m>n.
\end{cases}
\end{align}
We shall assume, in addition to the zeros $\omega_k$ being simple, that no two zeros are
reflections of each other with respect to the  unit circle, i.e., we assume that
$\omega_k\ne \omega_j^*$ for all $k,j$
and, in particular ($k=j$), that there are no zeros on the unit circle.
These assumptions are necessary in order to expect the existence of a smooth solution of
the Polubarinova-Galin equation (\ref{pg1}), and even more so for the L\"owner-Kufarev equation.
Indeed, spelling out (\ref{pg1}) as an identity between rational functions as
$$
\dot{f}(\zeta,t)\cdot\zeta^{-1}g^*(\zeta,t) +\dot{f}^*(\zeta,t)\cdot\zeta g(\zeta,t)=2q(t)
$$
we see that if, for some particular value of $t$, $g$ and $g^*$ have a common zero, with $\dot{f}$ and $\dot{f}^*$
finite, then $q(t)$ must be zero. 

With the above  assumptions in force we can write
\begin{equation}\label{partialfractions}
\frac{q(t)}{g(\zeta,t)g^*(\zeta,t)} =A_\infty+\sum_{k=1}^m
\frac{\overline{A}_k}{\overline{\omega}_k}+\sum_{k=1}^m
\left[\frac{A_k}{\zeta-\omega_k}
+{\frac{\overline{A}_k\zeta}{1-\overline{\omega}_k\zeta}}\right],
\end{equation}
where the coefficients
$A_k=A_k(t,b,\omega_1,\dots,\omega_m,\zeta_1,\dots,\zeta_n)$ are
given by
\begin{equation}\label{Ak}
A_k=\frac{q(t)}{g'(\omega_k,t)g^*(\omega_k,t)}
=\frac{q}{|b|^2}\cdot\frac{\prod_{j}(\omega_{k}-\zeta_{j})\prod_{j}\overline{(\omega_{k}^{*}-\zeta_{j})}}
{\prod_{j\neq
k}(\omega_{k}-\omega_{j})\prod_{j}\overline{(\omega_{k}^{*}-\omega_{j})}}
\end{equation}
for $1\leq k\leq m$.
Notice that some of the $A_k$ may vanish: if $\omega_k\in\D$
and $\omega_k^*$ coincide with one of the poles $\zeta_j$, then $A_k=0$.

Now, $P(\zeta,t)$ is to be that holomorphic
function in $\mathbb{D}$ whose real part has boundary values
$q(t)/g(\zeta,t)g^*(\zeta,t)$ and whose imaginary part vanishes at
the origin. The function (\ref{partialfractions}) itself certainly
has the right boundary behaviour on $\partial\mathbb{D}$, but it is
not holomorphic in $\mathbb{D}$. On the other hand, the two types of
polar parts occurring in (\ref{partialfractions}) have the same real
parts on the boundary:
$$
{\rm Re\,}\frac{A_k}{\zeta-\omega_k}={\rm
Re\,}{\frac{\overline{A}_k\zeta}{1-\overline{\omega}_k\zeta}} \quad
{\rm on\,\,}\partial\mathbb{D}.
$$
Therefore, without changing the real part on the boundary we can
make the function (\ref{partialfractions}) holomorphic in
$\mathbb{D}$ by a simple exchange of polar parts. In addition, one
can add a purely imaginary constant to account for the normalization
of $P$ at the origin. This gives
\begin{equation}\label{structureP}
P(\zeta,t)=A_0+\sum_{\omega_j\in\C\setminus\overline{\D}}\frac{2A_j}{\zeta-\omega_j}
+\sum_{\omega_j\in\D} \frac{2\overline{A}_j\zeta}{1-\overline{\omega}_j\zeta},
\end{equation}
with the $A_j=A_j(t)$ given by (\ref{Ak}) for $1\leq j\leq m$. For $A_0$ we have
$$
{\rm Re\,}A_0={\rm Re\,}A_\infty+{\rm Re\,}\sum_{k=1}^m
\frac{{A}_k}{{\omega}_k},
$$
$$
{\rm Im\,}A_0= {\rm Im\,}\sum_{\omega_j\in\C\setminus\overline{\D}}\frac{2A_j}{\omega_j},
$$
so that the real part of (\ref{partialfractions}) remains unaffected in the passage to (\ref{structureP}),
and so that the normalization ${\rm Im\,}P(0,t)=0$ is achieved.
Note that $\re(P(\zeta,t)+R(\zeta,t))\geq 0$ in
$\D$ if and only if $R=0$ (because if $R\ne 0$ then $R$ has poles in
$\D$).

Thus
$$
P(\zeta,t)+R(\zeta,t)=
$$
$$
=A_0+\I\im\sum_{\omega_j\in\D} \frac{2B_j}{\omega_j}
+\sum_{\omega_j\in\C\setminus\overline{\D}}\frac{2A_j}{\zeta-\omega_j}
+\sum_{\omega_j\in{\D}}\frac{2B_j}{\zeta-\omega_j}
+\sum_{\omega_j\in\D} \frac{2(\overline{A}_j-\overline{B}_j)\zeta}{1-\overline{\omega}_j\zeta},
$$
$$
=C+\sum_{\omega_j\in\C\setminus\overline{\D}}\frac{2A_j}{\zeta-\omega_j}
+\sum_{\omega_j\in{\D}}\frac{2B_j}{\zeta-\omega_j}
-\sum_{\omega_j\in\D} \frac{2(\overline{A}_j-\overline{B}_j)(\omega_j^*)^2}{\zeta-{\omega}_j^*},
$$
where
$$
C=A_0+\I\im\sum_{\omega_j\in\D} \frac{2B_j}{\omega_j}
-\sum_{\omega_j\in\D}{2(\overline{A}_j-\overline{B}_j)\omega_j^*}.
$$
Also,
$$
\zeta(P(\zeta,t)+R(\zeta,t))=
$$
$$
= C\zeta +D+\sum_{\omega_j\in\C\setminus\overline{\D}}\frac{2A_j\omega_j}{\zeta-\omega_j}
+\sum_{\omega_j\in{\D}}\frac{2B_j\omega_j}{\zeta-\omega_j}
-\sum_{\omega_j\in\D} \frac{2(\overline{A}_j-\overline{B}_j)(\omega_j^*)^3}{\zeta-{\omega}_j^*},
$$
with
$$
D=\sum_{\omega_j\in\C\setminus\overline{\D}}2A_j +\sum_{\omega_j\in{\D}}2B_j
-\sum_{\omega_j\in\D}{2(\overline{A}_j-\overline{B}_j)(\omega_j^*)^2}.
$$

In view of (\ref{logg}), (\ref{loggdot}), (\ref{loggprime})
the dynamical law (\ref{PRGg}) becomes
\begin{equation}\label{PGg1}
\frac{\dot b}{b}-\sum_{k=1}^m\frac{\dot\omega_k}{\zeta-\omega_k}
+\sum_{j=1}^n\frac{\dot\zeta_j}{\zeta-\zeta_j}=
\end{equation}
$$
=\big(C\zeta +D+\sum_{\omega_j\in\C\setminus\overline{\D}}\frac{2A_j\omega_j}{\zeta-\omega_j}
+\sum_{\omega_j\in{\D}}\frac{2B_j\omega_j}{\zeta-\omega_j}
-\sum_{\omega_j\in\D} \frac{2(\overline{A}_j-\overline{B}_j)(\omega_j^*)^3}{\zeta-{\omega}_j^*}\big)
$$
$$
\cdot\big(\sum_{k=1}^m\frac{1}{\zeta-\omega_k}
-\sum_{j=1}^n\frac{1}{\zeta-\zeta_j}\big)+
$$
$$
+C-\sum_{\omega_j\in\C\setminus\overline{\D}}\frac{2A_j\omega_j}{(\zeta-\omega_j)^2}
-\sum_{\omega_j\in{\D}}\frac{2B_j\omega_j}{(\zeta-\omega_j)^2}
+\sum_{\omega_j\in\D} \frac{2(\overline{A}_j-\overline{B}_j)(\omega_j^*)^3}{(\zeta-{\omega}_j^*)^2}.
$$

The derivatives $\dot b$, $\dot{\omega}_k$, $\dot{\zeta}_j$ to be determined appear as
coefficients in the constant term and poles of order one. Therefore (\ref{PGg1}) can
be satisfied only if all terms with poles of higher order cancel out. This automatically
occurs for the terms of the form $\frac{2A_j\omega_j}{(\zeta-\omega_j)^2}$ (${\omega_j\in\C\setminus\overline{\D}}$) and $\frac{2B_j\omega_j}{(\zeta-\omega_j)^2}$
(${\omega_j\in{\D}}$).

In order that the remaining terms
$\sum_{\omega_j\in\D} \frac{2(\overline{A}_j-\overline{B}_j)(\omega_j^*)^3}{(\zeta-{\omega}_j^*)^2}$
with poles of the second order shall disappear we must have, for each $j$ with $\omega_j\in\D$, that
\begin{equation}\label{AB}
A_j=B_j.
\end{equation}
These second order poles cannot cancel in any other way.
In order to allow the rational form (\ref{structureg}) to be stable in time we therefore from now on that  (\ref{AB}) holds.

Note that $R$ under this assumption becomes uniquely determined.
When (\ref{AB}) holds,
\begin{equation}\label{C}
C=A_0+\I\im\sum_{\omega_j\in\D} \frac{2A_j}{\omega_j}
={\rm Re\,}A_\infty+{\rm Re\,}\sum_{k=1}^m
\frac{{A}_k}{{\omega}_k}
+\I \,{\rm Im\,}\sum_{j=1}^m\frac{2A_j}{\omega_j},
\end{equation}
\begin{equation}\label{D}
D=\sum_{j=1}^m 2A_j
\end{equation}
and $P+R$ takes the simpler form
$$
P(\zeta,t)+R(\zeta,t)=C+\sum_{j=1}^m \frac{2A_j}{\zeta-\omega_j}.
$$
The dynamical law (\ref{PGg1}) now becomes
\begin{equation}\label{PGg2}
\frac{\dot b}{b}-\sum_{k=1}^m\frac{\dot\omega_k}{\zeta-\omega_k}
+\sum_{j=1}^n\frac{\dot\zeta_j}{\zeta-\zeta_j}=
\end{equation}
$$
=\big(C\zeta +D+\sum_{j=1}^m\frac{2A_j\omega_j}{\zeta-\omega_j}\big)
\cdot\big(\sum_{k=1}^m\frac{1}{\zeta-\omega_k}
-\sum_{j=1}^n\frac{1}{\zeta-\zeta_j}\big)
+C-\sum_{j=1}^m\frac{2A_j\omega_j}{(\zeta-\omega_j)^2}.
$$
Therefore (\ref{PGg1}) results in the following system
of ordinary differential equations for $\omega_k$, $\zeta_j$, $b$.

\begin{theorem}\label{thm:dynamics}

Under the assumption that $g$ has only simple zeros, that
$g$ and $g^* $ have no common zeros (in particular $g$ has no zero on $\partial \D$),
and that in addition (\ref{AB}) holds, the Polubarinova-Galin equation (\ref{pg1}), or (\ref{PRGg}), gives the following rational dynamics for $g$:
$$
\frac{d}{dt}\log\omega_k
=-C-\frac{2A_k}{\omega_k}-\sum_{j=1,\,j\ne k}^m \frac{2(A_k+A_j)}{\omega_k-\omega_j}+\sum_{j=1}^n\frac{2A_k}{\omega_k-\zeta_j}
$$
\begin{equation}\label{rationalzerosPR}
=P^*(\omega_k)+R^*(\omega_k)-\frac{2A_k}{\omega_k}(1+\sum_{j=1}^m
\frac{1}{1-\overline{\omega}_j \omega_k}-\sum_{j=1}^n
\frac{1}{1-\overline{\zeta}_j\omega_k}),
\end{equation}
\begin{equation}\label{rationalpolesPR}
\frac{d}{dt}\log \zeta_j
=-C-\sum_{k=1}^m
\frac{2A_k}{\zeta_j-\omega_k}
=P^*(\zeta_j)+R^*(\zeta_j),
\end{equation}
\begin{equation}\label{rationalbPR}
\frac{d}{dt}\log b=(m-n+1)C.
\end{equation}
Here the coefficients $A_j$, $C$ are given by (\ref{Ainfty}), (\ref{Ak}), (\ref{C}).

\end{theorem}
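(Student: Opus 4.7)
The plan is to view (\ref{PRGg}) as an identity of rational functions of $\zeta$ and extract the dynamics by matching principal parts at each pole, together with the behavior at infinity. Since we are under the standing hypothesis (\ref{AB}), the derivation preceding the theorem has already reduced $P+R$ to the clean partial-fraction form
\[
P(\zeta,t)+R(\zeta,t)=C+\sum_{j=1}^{m}\frac{2A_{j}}{\zeta-\omega_{j}},
\]
so I would substitute this, together with (\ref{loggdot}) and (\ref{loggprime}) for $\partial_t\log g$ and $\partial_\zeta \log g$, into (\ref{PRGg}). The resulting identity is exactly equation (\ref{PGg2}). By construction, the only possible double poles, at each $\omega_k$, cancel against those in $\partial_\zeta[\zeta(P+R)]$; this cancellation is precisely what forced us to impose (\ref{AB}).

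Next I would read off residues and asymptotics. At $\zeta=\omega_{k}$, the LHS contributes $-\dot{\omega}_{k}$; on the RHS the simple-pole part collects contributions $C\omega_{k}+D$ (from the $(C\zeta+D)\cdot(\zeta-\omega_k)^{-1}$ piece), cross terms $\sum_{j\neq k}\bigl[\tfrac{2A_{j}\omega_{j}}{\omega_{k}-\omega_{j}}+\tfrac{2A_{k}\omega_{k}}{\omega_{k}-\omega_{j}}\bigr]$, and the pole/zero coupling $-\sum_{j=1}^{n}\tfrac{2A_{k}\omega_{k}}{\omega_{k}-\zeta_{j}}$. Dividing by $\omega_k$, using $D=2\sum_{\ell}A_{\ell}$, and the elementary identity $\tfrac{\omega_j}{\omega_k(\omega_k-\omega_j)}=\tfrac{1}{\omega_k-\omega_j}-\tfrac{1}{\omega_k}$ to telescope, yields the first form of (\ref{rationalzerosPR}). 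The same procedure at $\zeta=\zeta_j$ produces the first form of (\ref{rationalpolesPR}) (only the pairing of $\zeta(P+R)|_{\zeta_j}$ with $-(\zeta-\zeta_j)^{-1}$ contributes). Finally, letting $\zeta\to\infty$: LHS $\to \dot b/b$, while $\zeta(P+R)\sim C\zeta+D+O(1/\zeta)$ and $\partial_\zeta\log g\sim (m-n)/\zeta$ give a product $\to (m-n)C$, and $\partial_\zeta[\zeta(P+R)]\to C$; this produces (\ref{rationalbPR}).

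The second, more conceptual, form of each ODE comes from the defining relation $(P+P^{*})gg^{*}=2q$ together with $R+R^{*}\equiv 0$, which gives the identity $(P+R)+(P^{*}+R^{*})=2q/(gg^{*})$. At $\zeta=\zeta_j$ the right side vanishes (because $g(\zeta_j)=\infty$ while $g^{*}(\zeta_j)$ is finite under our non-degeneracy assumption), so $-(P+R)(\zeta_j)=(P^{*}+R^{*})(\zeta_j)$, which is exactly the second equality in (\ref{rationalpolesPR}).

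The remaining identity, the alternate form of (\ref{rationalzerosPR}), is where the real bookkeeping happens and is what I anticipate as the main obstacle. At $\omega_k\in\D$, write $g(\zeta)=(\zeta-\omega_k)h(\zeta)$ with $h(\omega_k)=g'(\omega_k)\neq 0$, and expand
\[
\frac{2q}{g(\zeta)g^{*}(\zeta)}=\frac{2A_{k}}{\zeta-\omega_{k}}+L_{k}+O(\zeta-\omega_{k}),\qquad L_{k}=-2A_{k}\Bigl[\tfrac{h'(\omega_{k})}{h(\omega_{k})}+\tfrac{(g^{*})'(\omega_{k})}{g^{*}(\omega_{k})}\Bigr].
\]
The regular part of $(P+R)$ at $\omega_k$ is $C+\sum_{j\neq k}\tfrac{2A_j}{\omega_k-\omega_j}$, and the identity above says it equals $L_{k}-(P^{*}+R^{*})(\omega_{k})$. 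Computing the two logarithmic derivatives from the factored forms of $h$ and $g^{*}$, and applying $\tfrac{\bar\omega_{i}}{1-\bar\omega_{i}\omega_{k}}=\tfrac{1}{\omega_{k}}\bigl[\tfrac{1}{1-\bar\omega_{i}\omega_{k}}-1\bigr]$ (and the analogous relation for $\zeta_j$) to rewrite the sums, every term that does not appear in the claimed formula telescopes away; the sums $\sum_{i\neq k}(\omega_k-\omega_i)^{-1}$ and $\sum_j(\omega_k-\zeta_j)^{-1}$ coming from $h'/h$ cancel exactly against the corresponding contributions already present in the first form of (\ref{rationalzerosPR}), leaving only the advertised factor $\tfrac{2A_k}{\omega_k}\bigl(1+\sum_j\tfrac{1}{1-\bar\omega_j\omega_k}-\sum_j\tfrac{1}{1-\bar\zeta_j\omega_k}\bigr)$. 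This algebraic reconciliation is tedious but entirely elementary, and completes the theorem.
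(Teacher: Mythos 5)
Your proposal is correct and takes essentially the same approach as the paper: the derivation preceding the theorem is exactly the reduction of (\ref{PRGg}) to the rational identity (\ref{PGg2}) under assumption (\ref{AB}), and the theorem is then read off by matching simple-pole residues at each $\omega_k$ and $\zeta_j$ (the double poles having been arranged to cancel) together with the constant term at infinity, which is precisely your extraction; your verification of the alternate ``conceptual'' forms via $(P+R)+(P^*+R^*)=2q/(gg^*)$ and the expansion of the regular part at $\omega_k$ is also the intended reasoning. One small caveat: in justifying the second equality of (\ref{rationalpolesPR}) you assert $g^*(\zeta_j)$ is finite, which is right (poles of $g^*$ lie in $\D$), but the argument actually needs $gg^*$ to have a genuine pole at $\zeta_j$ --- this can fail in the degenerate case $\zeta_j=\omega_k^*$ with $n_j=1$, which the stated hypotheses do not explicitly exclude (and which the paper elsewhere allows, noting $A_k=0$ there); this is an edge case the paper glosses over as well, so it is not a defect of your proof relative to the paper's.
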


Since the $B_j(t)$ are completely free functions we can simply define them by (\ref{AB}).
Then (\ref{rationalzerosPR})--(\ref{rationalbPR}) is a regular system of ordinary differential equations,
having a unique solution as long as the stated conditions on the zeros of $g$ and $g^*$ hold.

The above unique rational solution of the Polubariova-Galin equation solves the L\"owner-Kufarev equation if
and only if $R=0$. By (\ref{AB}) this requires that $A_k=0$ for each $k$ with $\omega_k\in\D$.
Looking at (\ref{Ak}) we see that, if $q\ne 0$, the only way that $A_k$ can vanish at a given instant is that $\omega_k^*=\zeta_j$
for some $j$. However, it will be seen in Example~\ref{ex:onlyPG} below
that $A_k$ may vanish for some particular value of $t$ without vanishing for all $t$.
Therefore we also need that
$$
\dot{\omega}_k^*=\dot{\zeta}_j,
$$
so that the condition $A_k=0$ persists in time. We shall show that this is the case
if and only if $\omega_k^*$ is a pole of $g$ of order at least two (more generally,
of strictly higher order than that of the zero).

Assume that at one particular moment, say $t=0$, $\omega_k^*=\zeta_j$
for some pair $k$, $j$. Then $A_k=0$ at that moment so that (\ref{rationalzerosPR}),
(\ref{rationalpolesPR}) (with $R=0$) become
\begin{equation}\label{omegazetaPPstar}
\begin{cases}
\frac{d}{dt}\log\omega_k=P^*(\omega_k),\\
\frac{d}{dt}\log \zeta_j=P^*(\zeta_j).
\end{cases}
\end{equation}
This can also be written
$$
\begin{cases}
\frac{d}{dt}\log\omega_k^*=-P(\omega_k^*),\\
\frac{d}{dt}\log \zeta_j=P^*(\omega_k^*).
\end{cases}
$$
Thus we see that $\frac{d}{dt}\log\omega_k^*=\frac{d}{dt}\log \zeta_j$ holds
if and only if $P(\omega_k^*)+P^*(\omega_k^*)=0$, which, recalling (\ref{PPstar}),
happens if and only if $gg^*$ has a pole at $\omega_k^*=\zeta_j$.
Looking at the expression (\ref{structureg}) for $g$ one sees that this
occurs if and only if the pole of $g$ at $\zeta_j$ is of higher order than the zero of
$g$ at $\omega_k$.

Finally a remark about multiple zeros. If a zero $\omega_k\in \C\setminus \overline{\D}$
is of order $\geq 2$ then $P$ will have a pole at $\omega_k$ of the same order, and it is
easy to see that this pole will never cancel out in the dynamical equation (\ref{PGg1}). For this reason multiple
zeros outside $\overline{\D}$ can never survive, even though collisions may occur. The solution will
remain smooth over a collision because if two roots, $\omega_1$ and $\omega_2$, collide
the equations  still will be regular when  reformulated in terms of the combinations $\omega_1+\omega_2$
and $\omega_1\omega_2$.

On the other hand, if $g$ has a multiple zero $\omega_k$ in $\D$ this will not cause any higher order pole of
$P$ if $g$ has a pole of at least the same order at $\omega_k^*$, and if the order of the pole is of strictly higher
order then the same situation will persist in time. Therefore a solution will be obtained as before.

By now we have proved the following theorem on local behavior of solutions
of (\ref{lkgen1}), or (\ref{PRGg}).

\begin{theorem}\label{thm:rational}
Given $g(\zeta,0)$ of the form (\ref{structureg}) 
such that no two zeros of $g(\zeta,0)$ are related by $\omega_k=\omega_j^*$,
then for exactly one choice of $R(\zeta,t)$,
namely that given by (\ref{AB}), there exists a solution $g(\zeta,t)$ 
of  (\ref{PRGg}) which remains
on the original  rational form (\ref{structureg}). 

Necessary and sufficient condition for this rational solution to also solve the L\"owner-Kufarev equation (\ref{lk}) is that
$R(\zeta,t)=0$. This occurs precisely under the condition that
whenever $g(\zeta,t)$ has a zero $\omega_k$ in $\D$, the reflected point $\omega_k^*$
is a pole of $g(\zeta,t)$ of order strictly greater than that of the zero.
This property is conserved in time.

Every pole $\zeta_j$ of $g$ moves out from the origin, and every zero $\omega_k$ inside the unit disk moves towards
the origin, as time increases. 
\end{theorem}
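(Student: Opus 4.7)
My plan is to handle the successive claims of the theorem in order, since most of the work has already been laid out in the discussion preceding the statement.

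\textbf{Uniqueness of $R$ and local existence.} Substituting the rational ansatz (\ref{structureg}) into (\ref{PRGg}) yields the identity (\ref{PGg1}). Its left-hand side has only simple poles in $\zeta$, while the right-hand side contains the second-order pole terms $\sum_{\omega_j\in\D}2(\bar A_j-\bar B_j)(\omega_j^*)^3/(\zeta-\omega_j^*)^2$. Under the hypothesis that no two zeros satisfy $\omega_k=\omega_j^*$, these reflected points $\omega_j^*$ are distinct from all other singular points appearing, so each coefficient must vanish separately, forcing $B_j=A_j$ as in (\ref{AB}); this uniquely determines $R$. Matching the remaining simple-pole and constant terms then produces the regular ODE system (\ref{rationalzerosPR})--(\ref{rationalbPR}) on the parameters $(b,\omega_k,\zeta_j)$, which admits a unique local solution by Picard--Lindel\"of.

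\textbf{L--K characterization and its time invariance.} By Theorem~\ref{thm:R}, equation (\ref{lk}) is equivalent to $R\equiv 0$, i.e.\ (via (\ref{AB})) to $A_j=0$ for every zero $\omega_j\in\D$; inspection of (\ref{Ak}) with $q>0$ shows this occurs precisely when $\omega_k^*=\zeta_j$ for some $j$. When this condition holds, the rewriting of (\ref{omegazetaPPstar}) given immediately after that equation yields $\frac{d}{dt}\log\omega_k^*=-P(\omega_k^*)$ and $\frac{d}{dt}\log\zeta_j=P^*(\omega_k^*)$, whose equality requires $P(\omega_k^*)+P^*(\omega_k^*)=0$. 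By (\ref{PPstar}) this equals $2q/(gg^*)(\omega_k^*)$, which vanishes iff $gg^*$ has a pole at $\omega_k^*$; since $g^*$ has a zero at $\omega_k^*$ of the same order as the zero of $g$ at $\omega_k$, while $g$ has a pole of order $n_j$ at $\omega_k^*=\zeta_j$, the product has a pole there iff $n_j$ strictly exceeds the zero order, which is the stated condition and is manifestly invariant under the dynamics.

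\textbf{Monotonicity, and the main obstacle.} Adding $R+R^*=0$ to (\ref{PPstar}) yields the meromorphic identity
\[
(P+R)(\zeta)+(P+R)^*(\zeta)=\frac{2q(t)}{g(\zeta,t)g^*(\zeta,t)}.
\]
At $\zeta_j$ the right-hand side vanishes (since $g$ has a pole while $g^*$ is generically finite and nonzero there), so $(P+R)^*(\zeta_j)=-(P+R)(\zeta_j)$, and via $\overline{(P+R)^*(\zeta_j)}=(P+R)(\zeta_j^*)$ one obtains $\re(P+R)^*(\zeta_j)=\re(P+R)(\zeta_j^*)$. Combining with (\ref{rationalpolesPR}),
\[
\frac{d}{dt}\log|\zeta_j|^2=2\,\re(P+R)^*(\zeta_j)=2\,\re(P+R)(\zeta_j^*),\qquad \zeta_j^*\in\D.
\]
In the L--K case ($R\equiv 0$), $\re P>0$ throughout $\D$ by the Poisson representation (\ref{poisson}) with $q>0$, so $|\zeta_j|$ strictly increases; the paired interior zero then satisfies $|\omega_k|=1/|\zeta_j|$ and moves inward at the same rate. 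The hard point, which is the main obstacle, is to obtain the same positivity $\re(P+R)(\zeta_j^*)>0$ in the genuine PG (non-L--K) regime, where $R$ has simple poles at the interior zeros $\omega_k\in\D$ and $\re(P+R)$ fails to be harmonic in $\D$. I would subtract the explicit polar parts of $R$ given by (\ref{definitionR1}) from $P+R$, use $\re R=0$ on $\partial\D$, and apply the Poisson representation to the resulting holomorphic remainder to conclude positivity at $\zeta_j^*$; the inward monotonicity of the interior zeros $\omega_k$ then follows in parallel from (\ref{rationalzerosPR}).
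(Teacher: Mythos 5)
Your treatment of the first two paragraphs of the theorem (uniqueness of $R$, local existence of the rational flow, the L\"owner--Kufarev characterization, and conservation of the order condition) matches the paper's own argument, which also proceeds by matching the second-order poles in (\ref{PGg1}) to force (\ref{AB}), then reads off the regular ODE system of Theorem~\ref{thm:dynamics}, and finally compares $\dot\omega_k^*$ with $\dot\zeta_j$ via (\ref{PPstar}). Two small remarks there: your claim that the reflected points $\omega_j^*$ are ``distinct from all other singular points appearing'' is not literally true (one may well have $\omega_j^*=\zeta_k$, i.e.\ a zero reflected into a pole), but the conclusion $A_j=B_j$ is unaffected; and the phrase ``manifestly invariant under the dynamics'' should really be unpacked as in the paper, namely that under the order condition the identity $\omega_k(t)^*=\zeta_j(t)$ persists because $\dot\omega_k^*=\dot\zeta_j$, while the orders themselves are locally constant.

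The genuine problem is in the last paragraph. Your L\"owner--Kufarev argument is fine and in fact cleaner than the paper's one-liner: from (\ref{rationalpolesPR}) with $R\equiv 0$ you get $\frac{d}{dt}\log|\zeta_j|^2=2\re P^*(\zeta_j)=2\re P(\zeta_j^*)>0$ directly from the Poisson representation in $\D$, and the inward motion of $\omega_k=\zeta_j^*$ follows from the pairing $|\omega_k|=1/|\zeta_j|$ without needing to assert anything about the sign of $\re P$ outside $\overline{\D}$. (Your preliminary observation that the right member of (\ref{PPstar}) vanishes at $\zeta_j$ ``since $g^*$ is generically finite and nonzero there'' is actually false in the L--K case --- there $g^*$ vanishes at $\zeta_j=\omega_k^*$, and the quotient $q/gg^*$ vanishes because the pole order of $g$ strictly dominates --- but you do not use that observation in your final step, so it is only a red herring.)

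However, the ``main obstacle'' you then set up --- proving $\re(P+R)(\zeta_j^*)>0$ in the genuine Polubarinova--Galin regime --- is not a gap to be filled; it is a statement that is false, and no amount of subtracting polar parts and reapplying Poisson can save it. The paper's own justification of the last sentence of the theorem invokes (\ref{omegazetaPPstar}), which is derived under $A_k=0$, i.e.\ precisely the L\"owner--Kufarev condition $R\equiv 0$; the monotonicity assertion is therefore to be read as pertaining to the L--K solutions only. A direct check in Example~\ref{ex:onlyPG} confirms that it fails for the PG solution: there, with all data real, (\ref{rationalpolesPR}) together with $A_1=q(\omega_1-\zeta_1)(1-\omega_1\zeta_1)/(b^2(1-\omega_1^2))$ and $C=A_\infty+A_1/\omega_1$, $A_\infty=q\zeta_1/(b^2\omega_1)$, simplifies to
\begin{equation*}
\frac{d}{dt}\log\zeta_1=\frac{q(1-\zeta_1^{\,2})}{b^2(1-\omega_1^{\,2})}<0,
\end{equation*}
so the pole of the PG solution moves \emph{inward}, not outward. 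In short: restrict the last claim of your argument to the L--K case (where you have already proved it), and drop the attempt to upgrade it to the PG case.
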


The last statement follows from (\ref{omegazetaPPstar}) together with the fact that $P$ is positive in $\D$, negative outside, and the opposite for
$P^*$.

\begin{remark}
A particular consequence of Theorem \ref{thm:rational} is that the there are no polynomial solutions of the L\"owner-Kufarev equation  with
zeros of $g$ in $\D$. 
\end{remark}

\begin{example}\label{ex:onlyPG}
This example is supposed to illustrate Theorem~\ref{thm:rational}.
Let $g$ initially be given by
$$
g(\zeta,0)=b(0)\frac{\zeta-\omega_1(0)}{\zeta-\zeta_1(0)}
$$
for some $0<\omega_1(0)<1$, $\zeta_1(0)>1$, $b(0)>0$. Then, first of all, there exists a unique
solution of the Polubarinova-Galin equation of the same form
\begin{equation}\label{gmobius}
g(\zeta,t)=b(t)\frac{\zeta-\omega_1(t)}{\zeta-\zeta_1(t)}
\end{equation}
with $0<\omega_1(t)<1$, $\zeta_1(t)>1$, $b(t)>0$. The system of ordinary differential
equations in Theorem~\ref{thm:dynamics} for $\omega_1(t)$, $\zeta_1(t)$, $b(t)$
explicitly becomes
$$
\begin{cases}
\dot\omega_1=--\frac{q\zeta_1}{b^2}-2A_1+\frac{2A_1\omega_1}{\omega_1-\zeta_1},\\
\dot\zeta_1=-\frac{q\zeta_1}{b^2\omega_1}-\frac{A_1\zeta_1}{\omega_1}+\frac{2A_1\zeta_1}{\omega_1-\zeta_1},\\
\dot{b}=\frac{q\zeta_1}{b\omega_1}+\frac{A_1b}{\omega_1},
\end{cases}
$$
where
$$
A_1= \frac{q(\omega_1-\zeta_1)(1-\omega_1{\zeta}_1)}{b^2(1-|\omega_1|^2)},
$$
and we have taken into account that all quantities are real.
It is seen immediately that the solution will go on as long as $\omega_1(t)$, $\zeta_1(t)$, $b(t)$
stay in the above specified intervals.
However, the so obtained solution will not solve the L\"owner-Kufarev equation because, by Theorem \ref{thm:rational},
that equation requires that $g$ has a pole of order at least two at the reflected point of $\omega_1(t)$.

Still, the L\"owner-Kufarev equation will also have a solution, at least a weak solution. This solution will
represent an evolution on a Riemann surface $\mathcal{M}$ above $\C$ with a branch point over $f(\omega_1(t),t)$, which has to be a fixed
(time-independent) point. If $\zeta_1(0)\ne\omega_1^*(0)$ this solution (it will be unique after the Riemann surface
$\mathcal{M}$ has been fixed) will be of the form
$$
g(\zeta,t)= b(t)\frac{(\zeta-\omega_1(t))(\zeta-\omega_2(t))(\zeta-\omega_3(t))}{(\zeta-\zeta_1(t))(\zeta-\zeta_2(t))^2},
$$
where $\zeta_2(t)=\omega_1(t)^*$ and $\omega_2(0)=\omega_3(0)=\zeta_2(0)$.
If $\zeta_1(0)=\omega_1^*(0)$ it will be of the slightly simpler form
$$
g(\zeta,t)= b(t)\frac{(\zeta-\omega_1(t))(\zeta-\omega_2(t))}{(\zeta-\zeta_1(t))^2},
$$
with $\zeta_1(t)=\omega_1(t)^*$ and $\omega_2(0)=\zeta_1(0)$.
One then obtains the evolution in Example~\ref{ex:sakai}, 
where $\zeta_1$ was used as time parameter. Thus, by (\ref{exg}), (\ref{bat}), 
$\omega_2(t)=2t\zeta_1(t)-\zeta_1(t)^{-1}$, 
$b(t)=b(0)\zeta_1(0)^{-3}\zeta_1(t)^3$.

\end{example}


\subsection{Approach via quadrature identities}

This approach to rational solutions has the advantage that it can incorporate transitions of zeros through $\partial\D$, even when $q\ne 0$.

In the previous subsection we saw that structural properties such as
having an identity of the kind (\ref{sigma}), or an estimate
(\ref{crho}), holding are preserved in time when $f=f(\cdot,t)$
represents a weak solution. The same type of argument also shows that the
property of $g$ being a rational function is preserved, because
such property is equivalent to an identity (\ref{sigma}) holding
with $\sigma$ of a particularly simple form. We start by elaborating
a lemma making this statement precise.

When $g$ is rational the computation in the beginning of the proof
of Lemma~\ref{lem:radius} can be made more explicit and ends up with
a quadrature formula for $h\in\mathcal{O}(\overline{\D})$. Specifically we get
\begin{equation}\label{weightedqi}
\frac{1}{\pi}\int_\D h |g|^2 dm =\frac{1}{2\pi\I}\int_{\partial\D}
h{f^*}df = \sum\res_\D(h f^* g d\zeta)+\sum_j c_j\int_{\gamma_j} h g
d\zeta.
\end{equation}
Here the $\gamma_j$ are arcs in $\D$ connecting the points where
$f^*$ has logarithmic poles. The above computation actually does not
require $h$ to be holomorphic in $\D$, it is enough that $hg$ is holomorphic.
Thus one can allow $h$ to have a pole at any zero of $g$ in $\D$.

Equation (\ref{weightedqi}) is a Riemann surface version, pulled back to $\D$, of (\ref{qi}). 
In the terminology of \cite{Sakai-1988} the corresponding domain $\tilde{\Omega}=\tilde{f}(\D)$
($\tilde{f}$ being the lift of $f$, as in Section~\ref{sec:nonlocuniv})),
then is a {\bf quadrature Riemann surface}.
The formula (\ref{weightedqi}) may alternatively be
presented without line integrals by expressing the right member in
terms of an integral of $h$, namely
$$
H(z)=\int_0^z h(\zeta) g(\zeta) d\zeta.
$$
The quadrature identity then becomes
$$
\frac{1}{\pi}\int_\D h |g|^2 dm=-\frac{1}{2\pi\I}\int_\D dH\wedge
d\bar{f} =-\sum\res_\D(H
df^*)=\sum\res_\D\frac{H(\zeta)g^*(\zeta)d\zeta}{\zeta^2}.
$$

By spelling out the results of the residue calculations, and taking
into account the other direction we have the following.

\begin{proposition}\label{lem:qi}
Let $f\in\mathcal{O}_{\rm norm}(\overline{\mathbb{D}})$.
Then $g=f'$ is a rational
function if and only if there exist $\alpha_j$, $\gamma_j$, $a_{kj}$, $c_j$, $r$, $\ell$, $n_j$ so that the quadrature identity
\begin{equation}\label{weightedqi1}
\frac{1}{\pi}\int_\D h |g|^2 dm=\sum_{j=1}^r
c_j\int_{\gamma_j} h g d\zeta+
\sum_{j=0}^\ell\sum_{k=1}^{n_j-1} a_{jk} h^{(k-1)}(\alpha_j)
\end{equation}
holds for all $h\in\mathcal{O}(\overline{\mathbb{D}})$. Here we have used the
same numbering as in (\ref{qi}). In particular, $\alpha_0=0$.
The end points of the $\gamma_j$ are the logarithmic poles of $f^*$ and the $\alpha_j$
are the ordinary poles of $f^*g$ in $\D$. In other words, with $g$ of the form (\ref{structureg}),
these points are from the set $\{\zeta_0^*, \zeta_1^*,\dots,\zeta_\ell^*\}$, $\zeta_0=\infty$.

In addition, if $I\ni t \mapsto f(\cdot, t)\in \mathcal{O}_{\rm norm}(\overline{\mathbb{D}})$ 
represents a weak solution of the Hele-Shaw problem as in Proposition~\ref{prop:weaksolutionLK},
then the form (\ref{weightedqi1}) is stable over time, assuming that it holds initially.
However, the coefficients depend on time:
$a_{jk}=a_{jk}(t)$, $\alpha_j=\alpha_j(t)$, $\gamma_{j}=\gamma_{j}(t)$, with the qualification that  $\alpha_0=0$ is fixed
and that for $a_{01}$ we have the precise behavior  $a_{01}(t)=a_{01}(0)+2 Q(t)$.
(Thus $a_{01}(t)$ may become zero at one moment of time.)

\end{proposition}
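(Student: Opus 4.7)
The plan is to treat the three assertions of the proposition in turn: the equivalence between rationality of $g$ and the quadrature form (\ref{weightedqi1}), and preservation of this form under weak-solution evolution. The forward direction rests on Stokes' theorem and residue calculus; the reverse on a Cauchy-transform analysis extending Lemma~\ref{lem:radius}; and the stability part on Proposition~\ref{prop:weaksolutionLK} combined with the subordination identity.

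For the forward direction, assume $g$ has the rational form (\ref{structureg}). By Green's theorem, and using $\bar f=f^{*}$ on $\partial\D$,
\[
\frac{1}{\pi}\int_\D h|g|^{2}\,dm=\frac{1}{2\pi\I}\int_{\partial\D} h\,f^{*}g\,d\zeta.
\]
The integrand is meromorphic in $\overline{\D}$ away from the singularities of $f^{*}$: ordinary poles at $\zeta_j^{*}$ for $j=1,\dots,\ell$ and at $\alpha_0=0$ (the reflection of $\infty$), together with logarithmic branch points whenever $g$ has residues. Shrinking $\partial\D$ onto small loops around the ordinary poles produces residue contributions $a_{jk}h^{(k-1)}(\alpha_j)$, whose multiplicity $n_j$ matches the pole order of $f^{*}g$ at $\alpha_j$; the slit contours $\gamma_j$ connecting the logarithmic branch points produce the line integrals $c_j\int_{\gamma_j}hg\,d\zeta$, with $c_j$ proportional to the residues of $g\,d\zeta$. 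This yields (\ref{weightedqi1}).

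For the reverse direction, insert $h(\zeta)=1/(z-\zeta)$ with $z\in\C\setminus\overline{\D}$ into (\ref{weightedqi1}): the right-hand side is an explicit rational function of $z$ plus Cauchy integrals of $g$ along the arcs $\gamma_j$, so the Cauchy transform $G(z)$ of $|g|^{2}\chi_\D$ extends meromorphically (with prescribed logarithmic cuts along the $\gamma_j$) outside $\overline{\D}$. Inside $\D$ one has $G=\bar f g+H$ for some $H\in\mathcal{O}(\D)$ exactly as in the proof of Lemma~\ref{lem:radius}, and continuity of $G$ across $\partial\D$ forces $f^{*}g$ to extend as a rational function plus logarithmic contributions supported on the $\gamma_j$. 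Dividing by $g^{*}$, which is analytic near $\partial\D$, and tracking the finite data of poles and cuts globally, one concludes that $f$ is a rational function plus finitely many logarithms, hence $g=f'$ is rational of the form (\ref{structureg}). This is the main obstacle: one must verify that the tentative rational extension of $f^{*}g$ is actually single-valued across the log cuts and compatible with the decay of $G$ at infinity; here the explicit finite structure of the quadrature data in (\ref{weightedqi1}) is decisive.

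For the stability assertion, apply Proposition~\ref{prop:weaksolutionLK} to obtain equality in (\ref{weakunitdisk2}) for holomorphic $h$ (both $\pm\re h$ and $\pm\im h$ are subharmonic). Substituting $h\circ\varphi(\cdot,s,t)$ into the quadrature identity at time $s$ and using the subordination consequence $g(\zeta,s)\,d\zeta=g(\varphi,t)\,d\varphi$ (which follows upon differentiating $f(\varphi(\zeta,s,t),t)=f(\zeta,s)$), the arc integrals become integrals over $\gamma_j(t):=\varphi(\gamma_j(s),s,t)$ with $c_j(t)=c_j(s)$, while Fa\`a di Bruno shows that point data at $\alpha_j(s)$ turn into point data at $\alpha_j(t):=\varphi(\alpha_j(s),s,t)$. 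The normalization $\varphi(0,s,t)=0$ keeps $\alpha_0=0$ fixed, and the only term in the quadrature identity at time $t$ that can produce a bare $h(0)$ contribution is the one with coefficient $a_{01}(t)$; since the transferred quadrature identity adds exactly $2(Q(t)-Q(s))h(0)$ (after dividing by $\pi$ as in (\ref{weightedqi1})), comparison gives $a_{01}(t)=a_{01}(s)+2(Q(t)-Q(s))$, and taking $s=0$ yields the stated formula.
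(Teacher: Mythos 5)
Your argument matches the paper's proof in all three parts: the forward direction is the residue evaluation of the boundary integral $\frac{1}{2\pi\I}\int_{\partial\D}h\,f^*g\,d\zeta$ (i.e.\ formula (\ref{weightedqi})), the reverse direction is the Cauchy-transform argument of Lemma~\ref{lem:radius} with test functions $h(\zeta)=(z-\zeta)^{-1}$, and the stability claim rests on (\ref{weakunitdisk2}) together with subordination. One small slip in the reverse direction: from the boundary relation $f^*g=R-H+Q$ on $\partial\D$ one should divide by $g$, not $g^*$, which isolates $f^*$ as in (\ref{boundaryrelation}); differentiating then kills the constant jumps of $Q/g$ across the arcs $\gamma_j$ and shows that $(f^*)'$ --- and hence $g^*=-\zeta^2(f^*)'$, and hence $g$ --- extends to a rational function. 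Your expanded version of the stability step, transporting the arc integrals via $g(\zeta,s)\,d\zeta=g(\varphi,t)\,d\varphi$ and the point data via Fa\`a di Bruno, then reading off the $h(0)$ coefficient by uniqueness of the quadrature representation, is a correct unpacking of what the paper merely declares to be ``an easy consequence of (\ref{weakunitdisk2})''.
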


\begin{proof}
For the first statement in the proposition, 
the `only if' part follows by evaluating the residues in the
previous formulas, the $\zeta_j$ being the poles of $f^*g$ in $\D$.
Note that a zero $\omega$ of $g$ in $\D$ will allow $f^*$ to have a pole at the
same point, and of the same order, hence $g$ to have a pole of one order higher
at the reflected point $\omega^*$, without causing a contribution in the right member of
(\ref{weightedqi1}). Alternatively, one may allow the test function $h$ to have a pole at
$\omega$.

To prove the `if' part we use in
(\ref{weightedqi1}) the test functions
$$
h(\zeta)=\frac{1}{z-\zeta} \quad (\zeta\in\D),
$$
with $z\notin \overline{\D}$. Then the left member of
(\ref{weightedqi1}) becomes the previously used (see
(\ref{cauchyG})) Cauchy transform $G$ of $|g|^2\chi_\D$ while the
right hand side takes the form $R(z)+Q(z)$, where $R(z)$ is a
rational function and $Q(z)$ is the contribution from the line
integrals:
$$
Q(z)=\sum_j c_j\int_{\gamma_j}\frac{g(\zeta)d\zeta}{z-\zeta}.
$$

Reasoning as in the proof of Lemma~\ref{lem:radius} we first get
$G=\bar{f}g+H$ on $\overline\D$ for some
$H\in\mathcal{O}({\D})$ which is continuous up to $\partial\D$, and
then the identity
$$
\bar{f}g+H =R+Q
$$
on $\partial\D$. The latter relation can also be written as
\begin{equation}\label{boundaryrelation}
{f^*(z)}=\frac{R(z)}{g(z)}-\frac{H(z)}{g(z)}+\frac{Q(z)}{g(z)} \quad
(z\in\partial \D).
\end{equation}
The integrals appearing in the definition of $Q(z)$ make jumps of magnitude $\pm
2\pi\I g(z)$ as $z$ crosses $\gamma_j$ from one side to the other.
It follows that the first two terms in the right member of
(\ref{boundaryrelation}) are meromorphic functions in $\D$ while the
last term is holomorphic except for constant ($=2\pi\I c_j$) jumps
across the arcs $\gamma_j$. These jumps disappear when
differentiating (\ref{boundaryrelation}). The conclusion is that
$df(z)=g(z)dz$ is a rational (Abelian) differential (or $f$ an Abelian
integral), because the right member gives the appropriate extension
of $f$ to the Riemann sphere.
Thus $g$ is a rational function, as claimed. Note that (\ref{boundaryrelation})
then holds  identically in $\C$.

The second statement in  the proposition, about weak solutions, is an easy consequence of (\ref{weakunitdisk2}).

\end{proof}

\begin{example}
With
$$
g(\zeta)=b\,\frac{(\zeta-\omega_1)(\zeta-\omega_2)}{(\zeta-\zeta_1)^2}
$$
the quadrature identity is in general of the form
$$
\frac{1}{\pi}\int_\D h|g|^2dm= a_0 h(0)+a_1 h(\zeta_1^*)+ c \int_0^{\zeta_1^*}hgd\zeta.
$$
However, if $\zeta_1^* =\omega_1$ (or $\zeta_1^*=\omega_2$) then $a_1=0$ and if $\zeta_1=\frac{1}{2}(\omega_1 +\omega_2)$
then $c=0$. Both of this occurred in Example~\ref{ex:sakai}.
Similarly, $a_1=c=0$ in case $\omega_1=\omega_2=\zeta_1$.

Taking the full Hele-Shaw evolution, as in Examples \ref{ex:R} and \ref{ex:sakai}, into account we 
therefore see that one can achieve a quadrature identity description of the evolution on the unified form
$$
\frac{1}{\pi}\int_\D h(\zeta)|g(\zeta,t)|^2dm(\zeta)= 2Q(t)h(0)
$$
for $0<t<\infty$, 
despite the fact that $f(\zeta,t)$ changes behavior as in (\ref{ftwocases}) when the zero of
$g$ passes through the unit circle.
\end{example}


\bibliography{bibliography_gbjorn}

\end{document}